 \date{\today}
\newtheorem{theorem}{Theorem}[section]
\newtheorem{lemma}[theorem]{Lemma}
\newtheorem{proposition}[theorem]{Proposition}
\newtheorem*{theorem_}{Theorem}
\newtheorem*{definition}{Definition}
\newtheorem*{definitions}{Definitions}
\newtheorem*{remark}{Remark}
\newtheorem*{remarks}{Remarks}
\theoremstyle{definition}
\numberwithin{equation}{section}
\newcommand{\Ker}{\operatorname{Ker}}
\newcommand{\Ran}{\operatorname{Ran}}
\newcommand{\rank}{\operatorname{rank}}
\newcommand{\supp}{\operatorname{supp}}
\newcommand{\dist}{\operatorname{dist}}
\newcommand{\sign}{\operatorname{sign}}
\newcommand{\ima}{\operatorname{Im}}
\newcommand{\const}{\operatorname{const}}
\newcommand{\clos}{\operatorname{clos}}
\newcommand{\tk}{t_{n_k}}
\newcommand{\tj}{t_{m_j}}
\newcommand{\A}{\mathcal{A}}
\newcommand{\LL}{\mathcal{L}}
\newcommand{\F}{A}
\newcommand{\G}{B}
\newcommand\defin {\overset {\text {\rm def} }{=}}
\newcommand\precd {\overset {d}{\prec}}
\newcommand\beqn{\begin{equation}}
\newcommand\neqn{\end{equation}}
\newcommand\nuu{{\beta}} 
\newcommand{\taauu}{\tau}
\newcommand{\he}{\mathcal{H}(E)}
\newcommand{\al}{\alpha}   
\newcommand{\be}{\beta}   
\newcommand{\de}{\delta}   
\newcommand{\ga}{\gamma}   
\newcommand{\deab}{\varkappa}   
\newcommand{\om}{\omega}   
\newcommand{\la}{\lambda}
\newcommand{\eps}{\varepsilon}
\newcommand{\si}{\sigma}   
\newcommand{\wt}{\widetilde } 
\newcommand{\wh}{\widehat } 
\newcommand{\cO}{\mathcal{O} } 
\newcommand{\cD}{\mathcal{D}}
\newcommand{\M}{\mathcal{M}}
\newcommand{\cH}{\mathcal{H}}
\newcommand{\BR}{\mathbb{R} } 
\newcommand{\BN}{\mathbb{N} }
\newcommand{\ta}{\widetilde{\mathcal{A}}}
\newcommand{\tl}{\widetilde{\mathcal{L}}}
\newcommand{\taa}{\tilde a}
\newcommand{\tb}{\tilde b}
\newcommand{\BZ}{\mathbb{Z}}
\newcommand{\Gr}{G}    
\newcommand{\I}{I}
\newcommand{\RR}{\mathbb{R}}
\newcommand{\BC}{\mathbb{C}}
\newcommand{\NN}{\mathbb{N}}
\newcommand{\sm}{\setminus}
\renewcommand{\phi}{\varphi}
\renewcommand{\Im}{\operatorname{Im}}
\renewcommand{\kappa}{\varkappa}
\newcommand{\cDA}{\cD(\A)}
\newcommand{\cDL}{\cD(\LL)}
\newcommand{\cDLst}{\cD(\LL_*)}
\newcommand{\admone}{{\rm A}}
\newcommand{\admonest}{{\rm A}^*}
\newcommand{\admn}{{\rm A}_n}
\newcommand{\admnst}{{\rm A}^*_n}
\newcommand{\Ellone}{L^1}
\newcommand{\Elltwo}{L^2}
\begin{document}
\sloppy

\title[Completeness of nonselfadjoint perturbations]
{ 
Completeness and spectral synthesis \\
of nonselfadjoint one-dimensional perturbations
\\ of selfadjoint operators
}


\author{Anton D. Baranov}
\address{
Department of Mathematics and Mechanics,
Saint Petersburg State University,
28, Universitetski pr., St. Petersburg, 198504, Russia}
\email{a.d.baranov@spbu.ru}

\author{Dmitry V. Yakubovich}
\address{Departamento de Matem\'{a}ticas,
Universidad Autonoma de Madrid, Cantoblanco 28049 (Madrid) Spain
\newline
\phantom{r} and
\newline
Instituto de Ciencias
Matem\'{a}ticas (CSIC - UAM - UC3M - UCM)}
\email{dmitry.yakubovich@uam.es}
\thanks{The results of Sections 2-5 were obtained as a part of the
Project MTM2015-66157-C2-1-P and by the ICMAT Severo Ochoa project
SEV-2015-0554 of the Ministry of Economy and Competition of Spain.
Theorems 1.3-1.6 and the material of Sections 6-8 were obtained
with the support of the Russian Science Foundation grant
14-21-00035.}

\begin{abstract}
We study spectral properties of nonselfadjoint rank one perturbations
of compact selfadjoint operators. The problems under consideration
include completeness of eigenvectors,
relations between completeness of the perturbed
operator and its adjoint, and the
spectral synthesis problem.
We obtain new criteria for completeness and spectral synthesis
in this class as well as
a series of counterexamples which show that the spectral structure
of rank one perturbations is, in general, unexpectedly rich and complicated.

A parallel spectral theory is developed for one-dimensional singular
perturbations of unbounded selfadjoint operators.
Our approach is based on a functional model for this class which translates
the properties of operators to completeness problems
for systems of reproducing kernels and their biorthogonals in some spaces
of analytic (entire) functions.
\medskip

\noindent {\bf M.S.C.(2010):}  Primary: 34L10 
47B32, 
47A55;  
Secondary:
47A45. 


\noindent {\bf Keywords:} selfadjoint operator, rank one perturbation,
inner function, spectral synthesis, entire function, de Branges space.
\end{abstract}
\maketitle

{}
\vskip-2cm

%
%
%
%
%
%
%
%
%
%
%
%
%
%



\section{Introduction and main results}

A Banach or Hilbert space linear operator will be called \textit{complete}
if it has a complete set of eigenvectors and root vectors.
Despite a large effort devoted to the study of
criteria of completeness of operators, our understanding still is
far from perfect, even for the case of ordinary
differential operators.

\subsection{Theorems of Keldy\v s and Macaev}
Most general abstract completeness results are due to
Keldy\v s~\cite{Keldysh}, \cite{Keldysh71}
and Macaev~\cite{Mats61} (see, also,~\cite[Chapter V]{Gohb_Krein}).
We recall that an operator $S$ on a Hilbert space belongs
to the Macaev ideal  $\mathfrak {S}_\om$ if it is
compact and its singular numbers $s_k$ satisfy the relation
$\sum_{k\ge 1} k^{-1} s_k <\infty$.

\begin{theorem_}[Keldy\v s, 1951]
Suppose $\A$ is a selfadjoint Hilbert space operator that belongs to a Schatten ideal
$\mathfrak {S}_p$, $0<p<\infty$ and satisfies
$\ker \A=0$. Let $\LL =\A(I+S)$, where $S$ is compact and $\ker(I+S)=0$.
Then the operators $\LL $ and $\LL^*$ are complete.
\end{theorem_}

\begin{theorem_}
[Macaev, 1961]
If $\LL =\A(I+S)$, where $\A$, $S$ are compact operators on a Hilbert space,
$\A$ is selfadjoint, $S\in \mathfrak {S}_\om$
and $\ker \A = \ker(I+S)=0$, then $\LL $ and $\LL^*$ are complete.
\end{theorem_}

A proof of this theorem and its generalizations to operator
pencils can be found in~\cite{MatsMog71}.
As Macaev proved in~\cite[Theorem 3]{Mats64},
in the above result $\mathfrak {S}_\om$ can be replaced by
a wider class, depending on $\A$.
Perturbations of a selfadjoint compact operator $\A$ that have
the form $\A(I+S)$ or $(I+S)\A$, where $S$ is compact,  are called {\it weak perturbations}.
In~\cite{MatsMogul76}, Macaev and Mogul'skii give an explicit condition on the spectrum of $\A$, equivalent to the property
that all weak perturbations of $\A$ are complete; see also~\cite{MatsMogul72}.

\subsection{Statement of the problems}
In this article, we consider
rank $n$ perturbations of compact selfadjoint operators, which are
neither weak nor dissipative. Our main results are concerned with rank one
(nondissipative) perturbations of compact selfadjoint operators.
We study the following spectral properties of the operators from this class:
\smallskip

\begin{itemize}
\item Completeness of eigenvectors or root vectors;
\smallskip

\item Relations between completeness of the operator and its adjoint;
\smallskip

\item Spectral synthesis problem for rank one perturbations.
\end{itemize}
\smallskip

We obtain new criteria for completeness of rank one perturbations,
for joint completeness of the operator and its adjoint and for the
spectral synthesis. At the same time we construct a series
of subtle (counter)examples which show that the spectral structure
of rank one perturbations becomes unexpectedly rich and complicated
as soon as we leave the classes covered by classical theories
(i.e., weak perturbations or dissipative operators).
In particular, we will show the sharpness of the
conditions of Macaev's theorem even for this class.

We also introduce
what we call {\it finite rank singular perturbations} of unbounded selfadjoint operators with discrete
spectrum. These are essentially unbounded inverses to finite rank perturbations
of compact selfadjoint operators with
trivial kernels.
Ordinary differential operators with
nonselfadjoint boundary conditions belong to this class.
We obtain parallel completeness results for the case of rank one singular perturbations.
The spectral theory of these operators is the second subject of this paper.

%
%

\subsection{Our methods: Functional model}
Our main tool for the study of rank one perturbations is a functional model
which translates each singular rank one perturbation of a singular selfadjoint
operator to some "model"$ $ operator on a de Branges space of entire functions
or, more generally, on some model (backward shift invariant)
subspace of the Hardy space in the upper half-plane.
The model operator is essentially a rank one perturbation
of the operator of multiplication by the independent variable
in the corresponding space of analytic functions.
This model relates completeness problems for rank one perturbations
to completeness of systems of reproducing kernels in de Branges spaces and
of their biorthogonal systems. New approaches to the study
of such problems were recently introduced
by Makarov and Poltoratski in
\cite{mak-polt}, \cite{mak-polt1} and by Baranov, Belov and Borichev
in \cite{bar-belov}, \cite{bbb}, \cite{bbby}, \cite{bbb1};
the results of the present paper both rely on and complement the results
of these papers.


\subsection{Main results}
If $f,g$ are vectors in a Hilbert space $H$, we denote by
$g^*$ the linear functional $g^*x\defin \langle x,g \rangle$ on $H$
and by $fg^*$ the rank one linear operator on $H$, given by
$(f g^*)x\defin \langle x,g \rangle f$.

Let $\A$ be a compact selfadjoint operator in a Hilbert space
(throughout the paper all Hilbert spaces are supposed to be separable)
with simple spectrum (i.e., $\A$ is cyclic) and trivial kernel. Thus,
we may assume that $\A$ is the operator of multiplication by
the independent variable $(\A f)(x) = xf(x)$ in some space
$\Elltwo(\mu)$, where $\mu = \sum_n \mu_n \delta_{s_n}$,
$s_n \ne 0$ and $s_n\to 0$, $|n|\to \infty$. In what follows we identify the
elements of $\Elltwo(\mu)$ and sequences: $a=(a_n)$, where $a_n = a(s_n)$.

For $a=(a_n), b=(b_n) \in \Elltwo(\mu)$, consider the corresponding
rank one perturbation of $\A$,
\beqn
\label{nom0}
\LL =\A +ab^*, \qquad \LL f = \A f + \langle f, b\rangle a.
\neqn

Now we formulate the main results of the paper.


\subsubsection{Results on completeness} We start with a (slight) extension of
the Macaev theorem for the case of rank one perturbations
(for a similar result for finite rank perturbations see
Proposition~\ref{gen_weak_Mats}).

\begin{theorem}
\label{genweak0}
Under the above conditions on $\A$ and $\LL$, assume that
\beqn
\label{smooth33}
\sum_n  \frac{|a_n b_n|\mu_n}{|s_n|} <\infty
\qquad\text{and} \qquad
\sum_n \frac{a_n\bar b_n \mu_n}{s_n} \ne -1.
\neqn
Then both $\LL$ and $\LL^*$ are complete.
\end{theorem}

First condition in \eqref{smooth33} means that
$a b\in x \Ellone(\mu)$ and so
the functions $a$ and $b$ have additional "smoothness"$ $ near 0.
If $ a\in x \Elltwo(\mu)$ or $b \in x \Elltwo(\mu)$, we are in the case
of weak perturbations, while, for $a\in x \Elltwo(\mu)$,
the second condition in \eqref{smooth33}
coincides with the nondegeneracy condition $\ker(I+(\A^{-1}a) b^*) =0$.
A more general situation when the first condition in \eqref{smooth33})
is satisfied is when there is $\alpha\in [0,1]$ such that
\beqn
\label{smooth}
|x|^{- \alpha} a\in  \Elltwo(\mu), \qquad |x|^{-1+\alpha} b\in  \Elltwo(\mu),
\neqn
which means that
$$
\sum_n |a_n|^2 |s_n|^{-2\alpha} \mu_n<\infty,
\qquad
\sum_n |b_n|^2 |s_n|^{2\alpha-2} \mu_n<\infty.
$$

Our second result shows that the property of being a generalized
weak perturbation may be replaced by a positivity condition.

\begin{theorem}
\label{positive0}
Suppose that $a_n\overline b_n\ge 0$ for all but possibly
a finite number of values of $n$, $b_n\ne 0$ for any $n$,
and $\sum_n |s_n|^{-1}|a_nb_n|\mu_n = \infty$.
Then $\LL $ and $\LL^*$ are complete.
\end{theorem}

In Theorem~\ref{3to1}, conditions sufficient for the joint
completeness of $\LL $ and $\LL^*$ are given in terms of the
generating function $\phi$, the parameter of the functional model.


\subsubsection{Relations between completeness of $\LL$ and $\LL^*$}
Note that if a
compact bounded operator $T$ is complete, a trivial obstacle
for completeness of $T^*$ is that $T$ may have a nontrivial kernel, while
$\ker T^* = 0$. The first (highly nontrivial) examples
of the situation where $T$ is complete and $\ker T=0$, while $T^*$
is not complete, were constructed by Hamburger~\cite{hamb}.
In~\cite{DeckFoPea} Deckard, Foia\c{s} and Pearcy gave a simpler
construction. However, in these examples one cannot conclude that
the corresponding operator is a small (in some sense)
perturbation of a selfadjoint operator.
Surprisingly, one can find such examples
among rank one perturbations of
a compact selfadjoint operator with an arbitrary spectrum.

\begin{theorem}
\label{adjoint0}
For any compact selfadjoint operator
$\A$ with simple point spectrum and trivial kernel there
exists a rank one perturbation $\LL$ of $\A$
with real spectrum such that
$\ker \LL = \ker \LL^*= 0$ and $\LL$ is complete, but
$\LL^*$ is not complete and, moreover,
the orthogonal complement to the span of root vectors of $\LL^*$
is infinite-dimensional.
\end{theorem}

It was pointed out to the authors by Mark Malamud that a concrete example
of a rank one perturbation $L$ of a compact normal operator
such that $\ker \LL = \ker \LL^*= 0$, $\LL$ is complete, but
$\LL^*$ is not, can be extracted from the results
by Lunyov and Malamud \cite[Section 4]{LunMal15} combined with 
\cite{MalamOrid2012}. The operator in this example is realized as
the inverse to a two-dimensional
first order differential operator with specially chosen boundary conditions.
A version of this example is presented in Appendix 1
with kind permission of M. Malamud.
Note however that for this construction is is essential that
the unperturbed normal operator is {\it nonselfadjoint}.
It would be interesting to find realizations of the examples
from Theorem \ref{adjoint0} as differential operators.

The following theorem shows that for sufficiently ''large'' or ''regular''
rank one perturbations the completeness of $\LL$
implies the  completeness of its adjoint.

\begin{theorem}
\label{frombb0}
Let the perturbation $\LL =\A +ab^*$ of $\A$
be complete and let $a, b \notin x \Elltwo(\mu)$.
Then its adjoint $\LL^*$ is also complete
if any of the following conditions is fulfilled:
\smallskip

$(i)$ $|a_n|^2 \mu_n  \ge C |s_n|^N>0$ for some $N>0$\textup;
\smallskip

$(ii)$ $|b_n a_n^{-1}| \le C |s_n|^{-N}$ for some $N>0$.
\end{theorem}


\subsubsection{Spectral synthesis for rank one perturbations}
Recall that a bounded linear operator $T$ in a Hilbert space is said to
{\it admit spectral synthesis} if
any $T$-invariant subspace $\M$ coincides with the closed
linear span of the eigenvectors and
root vectors which belong to $\M$ (we denote this "spectral"$ $ part of
$\M$ by $\mathcal{E}(\M)$).
Equivalently, this means that the restriction of $T$ to
any its invariant subspace $\M$ is complete.
This notion goes back to Wermer \cite{wer} who showed, in particular, that
the spectral synthesis holds for all normal compact operators.
The first example of a compact operator which does not admit spectral synthesis
also goes back (implicitly) to Hamburger's paper \cite{hamb}. Further examples and
generalizations were obtained by Nikolski~\cite{nk-volterra}
and Markus~\cite{markus70}, who showed also that any weak perturbation $\LL$
of a compact selfadjoint operator with $\ker \LL = 0$ admits the spectral
synthesis.
Recently, Lunyov and Malamud \cite{LunMal14} showed that for a class of
dissipative realizations of Dirac-type differential operators, completeness property
is equivalent to the spectral synthesis property.

It turns out however that the spectral synthesis may fail even for rank
one perturbations of compact selfadjoint operators.
Making use of the recent results from \cite{bbb1} we are
able to prove the following theorem.

\begin{theorem}
\label{synthesis}
For any compact selfadjoint operator $\A$ with
simple point spectrum and trivial kernel there
exists a bounded rank one perturbation $\LL$ of $\A$
with real spectrum such that $\ker \LL=\ker \LL^*=0$ and
both $\LL$ and $\LL^*$ have complete sets of
eigenvectors, but $\LL$ does not admit spectral synthesis.

Moreover, $\LL$ can be chosen in such way that  the spectral
synthesis will fail with an infinite defect, that is,
${\rm dim}\, \big(\M \ominus\mathcal{E}(\M)\big) = \infty$
for some $\LL$-invariant subspace $\M$.
\end{theorem}

The proof of Theorem~\ref{synthesis}
combines a spectral synthesis criterion due to Markus
with recent results on completeness of systems of reproducing kernels
\cite{bbb1}. Let $T$ be a compact operator in a Hilbert space $H$
with a trivial kernel and simple spectrum
such that the sequence of its eigenvectors
$\{x_n\}_{n\in \I}$ is complete in $H$ and minimal.
Denote by $\{x_n'\}_{n\in \I}$ the system biorthogonal to $\{x_n\}$
(i.e., $\langle x_k, x'_n\rangle = \delta_{kn}$).
By a theorem of Markus~\cite[Theorem~4.1]{markus70},
$T$ admits the spectral synthesis if and only if
the system $\{x_n\}_{n\in \I}$ is {\it strongly} or {\it hereditarily complete},
which means that for any partition $\I = \I_1 \cup \I_2$,
$\I_1 \cap \I_2 =\emptyset$,
the mixed system $\{x_n\}_{n\in \I_1} \cup \{x'_n\}_{n\in \I_2}$
is complete in $H$. Such systems are also known as
{\it strong Markushevich bases}. For further properties and examples
of hereditarily and nonhereditarily complete systems see~\cite{dns}.

At the same time we show that under certain
restrictions on the spectrum and on the perturbation
the spectral synthesis holds up to a finite-dimensional defect.

\begin{theorem}
\label{sp_sint}
Let $\A$ be a compact selfadjoint operator with
simple spectrum $\{s_n\}_{n\in I}$, $s_n \ne 0$. Assume that\textup:
\smallskip

$(a)$ $\{s_n\}_{n\in I}$ is ordered so that $s_n>0$ and $s_n$ decrease for
$n\ge 0$, and $s_n<0$ and increase for $n<0$ and
$$
|s_{n+1}-s_n| \ge C_1 |s_n|^{N_1}
$$
for some $C_1,N_1>0$\textup;
\smallskip

$(b)$ $\LL = \A + ab^*$ is a bounded rank one perturbation of $\A$
such that $a, b  \notin x \Elltwo(\mu)$ and $a$ satisfies condition $(i)$
from Theorem \ref{frombb0}, that is, $|a_n|^2 \mu_n \ge C |s_n|^{N}$
for some $C,N>0$\textup;
\smallskip

$(c)$ Operator $\LL$ is complete and all its eigenvalues
are simple and non-zero.
\smallskip
\\
Then for any $\LL$-invariant subspace $\M$
we have
$$
{\rm dim}\, \big(\M \ominus\mathcal{E}(\M)\big) < \infty,
$$
where the upper bound for the dimension depends only on $N$ and $N_1$.
\end{theorem}

In conditions of Theorem  \ref{sp_sint}
the defect of the spectral synthesis may be nonzero even for very regular
sequences $\{s_n\}$ and $\{a_n\}$. Let $s_n = a_n = \big(n+\frac{1}{2}\big)^{-1}$
and $\mu_n \equiv 1$, $n\in\mathbb{Z}$. Then the spectral synthesis
for $\A$ translates to the spectral synthesis (or hereditary completeness)
problem for systems of reproducing kernels in the classical Paley--Wiener
space $PW_\pi$ or, equivalently, for exponential systems in $\Elltwo(-\pi, \pi)$.
This problem was solved in \cite{bbb}, where exponential
systems were constructed for which the spectral synthesis fails. However, the defect
of the synthesis (the dimension of the orthogonal complement
$\M \ominus\mathcal{E}(\M)$) is always at most 1.

Using the recent results of \cite{bbb1}, it is possible to describe
completely the data $\mu$ and $a$ such that any complete rank one perturbation
$\A+ab^*$ (i.e., for any $b$) admits the spectral synthesis (see Theorem \ref{newt}).

We also remark that a sufficient condition for the spectral
synthesis in terms of the ''characteristic function'' of a rank
one perturbation was found by Gubreev and Tarasenko \cite[Theorem
2.5]{Gubr-Tar2010} (see Subsection \ref{pssr} for details).


\subsubsection{Sharpness of Macaev theorem}
Finally, using our techniques it is easy to show that even for
rank one perturbations, when we relax slightly the generalized
weakness property \eqref{smooth33}, the resulting perturbation may
become a {\it Volterra operator} with trivial
kernel (we recall that an operator is called Volterra if it is
compact and its spectrum equals $\{0\}$).

\begin{theorem}
\label{sharp}
There exists a compact selfadjoint operator $\A$
\textup(multiplication by $x$ in $\Elltwo(\mu)$\textup)
which is in $\mathfrak {S}_p$ for any $p>1/2$
with the following property\textup:
for any $\alpha_1, \alpha_2 \ge 0$ with
$\alpha_1+ \alpha_2 <1 $
there exist $a \in |x|^{\alpha_1} \Elltwo(\mu)$ and $b \in
|x|^{\alpha_2} \Elltwo(\mu)$
such that the perturbed operator
$\LL= \A_0 + a b^* $ is a
Volterra operator satisfying $\ker \LL=\ker\LL^*=0$.
\end{theorem}

It is shown in~\cite{MatsMogul72}, \cite{MatsMogul76}
that if $\A $ is a positive
operator, whose spectrum is sufficiently dense, then there is a weak perturbation
of $\A $, which is a Volterra operator (but in this case, of course, $\A$
does not belong to any Schatten class).

In our paper \cite{by}, we study in more detail the following question:
\textit{For which compact selfadjoint operator $\A$ does there exist
%
%
its rank one perturbation $\LL $, which is a Volterra operator}?


\subsection{Organization of the paper}

In {\it Section 2}, we develop the basic theory of rank $n$ singular perturbations
of selfadjoint operators. In particular, we relate
them with rank $n$ (bounded) perturbations of selfadjoint compact operators
(Proposition \ref{L inverse}). In {\it Section 3} (Propositions \ref{gen_weak_Mats} and
\ref{prp-Mats-type-sing}), we give some completeness results for
finite rank perturbations (both usual and singular) by applying
Macaev's theorem and a simple additional argument.

{\it Sections 4 and 5} deal with the functional model for singular rank one perturbations.
The functional model is presented in Theorem 4.4. Its proof is given in {\it Section 5}
as well as a detailed analysis of the parameters of the model.
To make the paper reasonably self-contained, the necessary background on model spaces
and their Clark measures is given in {\it Section 5}.
{\it Subsection \ref{herm--biehl}} contains some basic information
on de Branges spaces of entire functions.
In {\it Subsection \ref{reform}} the counterparts of main results
for the case of singular rank one perturbations are stated
(Theorems \ref{positive} -- \ref{frombb}).
Let us note here that the strategy of the proofs of the main results is to reduce
them to equivalent statements about singular rank one perturbations,
which are proved using the functional model and entire function theory.

In {\it Section 6} several completeness results are proved for singular rank
one perturbations (the main of them is Theorem~\ref{3to1}) and the proofs of theorems
\ref{positive} and \ref{frombb} are given. {\it Section~7} is devoted to the proof
of the most technically involved result of the paper --
Theorem \ref{noncompleteness2} (unbounded version of Theorem~\ref{adjoint0}).
Finally, in {\it Section 8} the proofs of our main results for rank one
perturbations of compact operators (Theorems \ref{genweak0} -- \ref{sharp}) are given.

Two appendices contain a brief survey of completeness results for
linear operators ({\it Appendix 1}) and the proofs of several
technical propositions from Section~\ref{model} ({\it Appendix~2}).
\bigskip
\\
\textbf{Acknowledgements.} The authors are grateful to
Yurii Belov, Vladimir Macaev, Arkadi Minkin, Nikolai Nikolski, and
Roman Romanov for many helpful comments and illuminating discussions.
\bigskip



\section{Singular perturbations of unbounded operators}
\label{model}

\subsection{Finite rank singular perturbations}
We begin with the following definition, motivated by
~\cite{Aziz_Behr_etl}, where a more general case of closed linear
relations was treated.
We refer to \cite{DerkMal91}, \cite{Albev-Kur99}, \cite{Posilicano},
\cite{Br_Marl_Nab_W08} and to books
\cite{GorbGorb91}, \cite{Lyants-Stor-book},
\cite{Kuzhel} for alternative treatments of
singular perturbations in more general settings.

\begin{definition}
Let $\A$, $\LL $ be unbounded closed linear operators on a Banach
space $H$. We say that $\LL $ is a \emph{a finite rank singular
perturbation} of $\A$ if their graphs $G(\A), G(\LL )$ in $H\oplus H$
differ in a finite dimensional space. If, moreover,
\beqn
\label{eq-dims}
\dim \big( G(\A) / (G(\A)\cap G(\LL ) ) =
\dim \big( G(\LL ) / (G(\A)\cap G(\LL ) )  \big) =n <\infty,
\neqn
then we will say that \emph{$\LL $ is a balanced rank $n$ singular perturbation} of $\A$.
\end{definition}

If $\A$ and $\LL $ are differential operators, defined by the same
regular differential expression of order $n$ and different
boundary relations ($n$ independent relations in both cases),
then $\LL $ is a balanced singular perturbation of $\A$ of rank less
or equal to $n$.

The main questions we study in this paper are the following:
\medskip

\emph{Given a singular rank one perturbation $\LL $
of a cyclic selfadjoint operator $\A$, when are operators $\LL $
and $\LL^*$ complete?  Under which assumptions
completeness of $\LL $ implies completeness of $\LL^*$?}
\medskip

\noindent
In Section \ref{d-subord+Mats},
we will relate this question with the completeness of
bounded rank $n$ perturbations of a compact selfadjoint operator and therefore with
theorems by Keldy\v s and Macaev. In fact, Proposition~\ref{L inverse} below implies that, roughly speaking,
rank $n$ singular perturbations of a selfadjoint operator $\A$ with discrete spectrum
are inverses of rank $n$ (bounded) perturbations of $\A^{-1}$.

Suppose $\A$ is a closed, densely defined linear operator on a Hilbert
space $H$. We denote by $\si(\A)$ the spectrum of $\A$
and by $\rho(\A)=\BC\sm \si(\A)$ its resolvent set. In what follows
we will always assume that $0\in \rho(\A)$. An obvious modification
of our construction below works if $\rho(\A)\ne\emptyset$.

We define the Hilbert space $\A H$ as the set of formal
expressions $\A  x$, where $x$ ranges over \textit{the whole space}
$H$. Put $\|\A x\|_{\A H}= \|x\|_H$ for all $x\in H$. The formula
$x=\A (\A^{-1}x)$ allows one to interpret $H$ as a linear submanifold
of $\A H$. We consider the scale of spaces
$$
\cDA \subseteq H \subseteq \A H.
$$
Under our assumptions, $\A^*$ is well defined, and there are
natural identifications
$\cD(\A^*)=(\A H)^*$, $\cDA =(\A^*H)^*$.


\subsection{Singular balanced perturbations and their $n$-data}
The rank $n$ singular balanced perturbations of $\A $
can be described in terms of what we will call $n$-data for $\A $.
Let $n\in \NN$. By \emph{$n$-data} we mean a triple
$
(a,b,\deab),
$
where
\beqn
\label{abkap}
a:\BC^n\to \A H,\quad b:\BC^n\to \A^*H, \qquad \deab :\BC^n\to \BC^n
\neqn
are linear and bounded, $\rank a=\rank b=n$, and for any
$c\in \BC^n$,
$$
\quad \qquad\qquad
\A^{-1}ac\in\cDA, \ \ \deab  c =
b^*(\A^{-1}ac) \enspace \Longrightarrow \enspace c=0.
\quad \qquad\qquad  ({\rm A }_n)
$$
Notice that $b^*$ is an operator $b^*: \cDA\to\BC^n$.

For any $n$-data $(a,b,\deab )$, we define a linear operator $\LL =\LL (\A ,a,b,\deab )$ in the following way:
\beqn
\begin{aligned}
\label{2app}
\cD(\LL )&\defin \big\{
y=y_0+\A^{-1}ac: \\
& \qquad \qquad c\in \BC^n,\, y_0\in \cDA,\, \deab  c+b^*y_0=0
\big\};                     \\
\LL (\A ,a,b,\deab )\, y& \defin \A y_0, \quad y\in \cDL.
\end{aligned}
\neqn
Condition  $(\admn)$  is
equivalent to the uniqueness of
the decomposition $y=y_0+\A^{-1}ac$ for $y\in \cD\big(\LL (\A ,a,b,\deab ) \big)$ and hence
to the correctness of the definition of $\LL $.

The introduction of this kind of perturbation is justified
by the fact that
any balanced rank $n$ singular perturbation of $\A $ has the form
$\LL (\A ,a,b,\deab)$ for some $n$-data, which, in a sense,
is determined uniquely.

\begin{proposition}
\label{Pr1add}
Suppose that $0\notin\si(\A)$. Then one has:
\begin{enumerate}

\item For each $n$-data $(a,b,\deab)$, the operator
$\LL (\A ,a,b,\deab )$, defined
in \eqref{2app},  is a balanced rank $n$ singular perturbation of $\A $\textup;

\item Any  balanced rank $n$ singular perturbation of $\A $ has the
form $\LL (\A ,a,b,\deab )$, for some $n$-data $(a,b,\deab )$.
\end{enumerate}
\end{proposition}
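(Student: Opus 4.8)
The plan is to phrase everything through the graphs $G(\A),G(\LL)\subseteq H\oplus H$ and the given duality $\cDA=(\A^*H)^*$, reducing both parts to bookkeeping around the subspace $G_0\defin G(\A)\cap G(\LL)$. Throughout I would use the canonical isomorphism $G(\A)\cong\cDA$, $(y_0,\A y_0)\mapsto y_0$, and injectivity of $\A$ (valid since $0\in\rho(\A)$).

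For part (1), I would first note that $\LL\defin\LL(\A,a,b,\deab)$ is single-valued: if $(0,\A y_0)\in G(\LL)$ then $y_0=-\A^{-1}ac$ with $\A^{-1}ac\in\cDA$ and $b^*(\A^{-1}ac)=\deab c$, so $(\admn)$ forces $c=0$ and $y_0=0$ --- this is the equivalence recorded after \eqref{2app}. A point $(y,\A y_0)\in G(\LL)$ with $y=y_0+\A^{-1}ac$ lies in $G(\A)$ iff $y\in\cDA$ and $\A y=\A y_0$; injectivity of $\A$ then gives $\A^{-1}ac=y-y_0=0$, hence $c=0$, and the constraint becomes $b^*y_0=0$. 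Thus $G_0$ corresponds to $\ker b^*\subseteq\cDA$. Since $\rank b=n$, the adjoint $b^*:\cDA\to\BC^n$ is onto (if $c\perp\Ran b^*$ then $\langle bc,\xi\rangle=0$ for all $\xi$, so $bc=0$ and $c=0$), so $G_0$ has codimension $n$ in $G(\A)$, which is the first equality in \eqref{eq-dims}. For the second, the assignment $(y,\A y_0)\mapsto c$ is a well-defined (by single-valuedness) linear map $G(\LL)\to\BC^n$ with kernel $G_0$, and it is onto because $b^*y_0=-\deab c$ is solvable for every $c$; hence $\dim\big(G(\LL)/G_0\big)=n$ as well. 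Finally $G_0$ is closed (kernel of the bounded functional $b^*$ on $\cDA$) and $G(\LL)$ is a finite-dimensional algebraic extension of $G_0$, so $G(\LL)$ is closed and $\LL$ is a closed balanced rank $n$ singular perturbation of $\A$.

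For part (2), let $\LL$ be a balanced rank $n$ singular perturbation and set $D_0\defin\{y_0:(y_0,\A y_0)\in G_0\}\subseteq\cDA$, a closed subspace of codimension $n$. I would obtain $b$ by writing $D_0$ as the common kernel of $n$ independent continuous functionals on $\cDA=(\A^*H)^*$; by reflexivity these functionals are given by vectors $b_1,\dots,b_n\in\A^*H$, and assembling them into $b:\BC^n\to\A^*H$ yields $\rank b=n$ and $D_0=\ker b^*$. To obtain $a$, I would use the identity $\A y-\LL y=ac$ in $\A H$ (valid because $\A\A^{-1}=\mathrm{id}$ on $\A H$) as a guide and define $\Phi:\cDL\to\A H$, $\Phi(y)=\A y-\LL y$. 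One checks $\ker\Phi=D_0$ --- indeed $\A y=\LL y$ in $\A H$ forces $y=\A^{-1}\LL y\in\cDA$ and $\A y=\LL y$ --- so $V\defin\Ran\Phi$ is $n$-dimensional; let $a:\BC^n\to\A H$ be any isomorphism onto $V$. Then $c(\cdot)\defin a^{-1}\Phi(\cdot):\cDL\to\BC^n$ is onto with kernel $D_0$, and $y_0(y)\defin y-\A^{-1}ac(y)=\A^{-1}\LL y\in\cDA$. Finally I would set $\deab c(y)\defin-b^*y_0(y)$; this is well defined since $c(y)=0$ gives $y\in D_0$, whence $y_0(y)=y$ and $b^*y=0$.

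It then remains to check that $(a,b,\deab)$ is $n$-data and that $\LL=\LL(\A,a,b,\deab)$. For $(\admn)$: if $\A^{-1}ac\in\cDA$ and $\deab c=b^*(\A^{-1}ac)$, pick $y\in\cDL$ with $c(y)=c$; then $\A^{-1}ac=y-y_0(y)\in\cDA$, so $y\in\cDA$ and $b^*(\A^{-1}ac)=b^*y-b^*y_0(y)$, and combining with $\deab c=-b^*y_0(y)$ leaves $b^*y=0$, i.e. $y\in D_0$ and $c=0$. The inclusion $\cDL\subseteq\cD\big(\LL(\A,a,b,\deab)\big)$ with equal action is built in ($y=y_0(y)+\A^{-1}ac(y)$, $\deab c(y)+b^*y_0(y)=0$, $\LL y=\A y_0(y)$); for the reverse inclusion, given admissible $(y_0',c')$ I would pick $y\in\cDL$ with $c(y)=c'$ and note that $y_0'-y_0(y)\in\ker b^*=D_0\subseteq\cDL$, so $y'=y_0'+\A^{-1}ac'=y+(y_0'-y_0(y))\in\cDL$ with $\LL y'=\A y_0'$. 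I expect the main obstacle to be the representation step: turning the geometric fact ``$D_0$ is a closed codimension-$n$ subspace of $\cDA$'' into a genuine bounded $b:\BC^n\to\A^*H$ rests on the identification $\cDA=(\A^*H)^*$ together with reflexivity and on the graph norm of $\cDA$ being compatible with this duality; the same care shows $V=\Ran\Phi$ is a subspace of $\A H$ (finite-dimensional, hence closed) and that $a,\deab$ are bounded. Once these identifications are in hand, the verifications above are routine linear algebra.
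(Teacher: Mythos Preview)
Your argument is correct and follows essentially the same route as the paper's proof in Section~\ref{s_Appendix}: both hinge on the common subspace $D_0=N$ (the projection of $G(\A)\cap G(\LL)$), the difference map $\Phi(y)=\A y-\LL y$ in $\A H$ (equivalently $K=\A^{-1}\LL-I$ in $H$) to produce $a$, and the decomposition $y=y_0+\A^{-1}ac$ together with the constraint $\deab c+b^*y_0=0$. The differences are organizational: you prove part~(1) in full (the paper leaves it to the reader), and you extract $b$ and $\deab$ separately --- first $b$ from $D_0=\ker b^*$ via the duality $\cDA=(\A^*H)^*$, then $\deab$ from the relation $\deab c(y)=-b^*y_0(y)$ --- whereas the paper obtains both at once by writing a certain finite-codimension linear manifold in $\BC^n\oplus\cDA$ as $\{(c,y_0):\deab c+b^*y_0=0\}$ with $b^*:\cDA\to\BC^m$ and then reducing $m$ to $n$. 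Your route avoids this extra rank-reduction step, while the paper's route does not need to verify separately that the definition of $\deab$ is consistent; both are clean and equivalent.
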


The proof of Proposition \ref{Pr1add} will be given in Appendix 2.

There are other ways of introducing singular perturbations,
that permit one to deal with infinite rank perturbations, see
~\cite{Lyants-Stor-book},~\cite{Ryzhov} and references therein.


\subsection{Elementary properties of singular perturbations}
In the propositions below, $\A $ is assumed to
be an arbitrary closed unbounded linear operator,
and $(a,b,\deab)$ are $n$-data defined in (\ref{abkap}).
We also assume that $0\notin\si(\A)$.

Suppose $n$-data $(a,b,\deab)$ are fixed.
We recall that the adjoint of a closed operator on a
Hilbert space exists if and only
if this operator is densely defined. To be able to consider $\LL^*$,
we need to introduce a dual condition to  $(\admn)$:
for any $d\in \BC^n$,
$$
\qquad  \quad
(\A^*)^{-1} bd \in \cD(\A^*), \ \  \deab^*d=a^*\big((\A^*)^{-1} bd\big)
\Longrightarrow d=0.
\qquad  \quad
(\admnst)
$$

\begin{proposition}
\label{Pr-adjoints-n}
\begin{enumerate}
\item
Operator $\LL =\LL (\A , a,b,\deab)$ is densely defined if and only if
the data $(a,b,\deab)$ satisfy  $(\admnst)$ .

\item If $(\admnst)$  holds, then  $\LL (\A ,a,b,\deab)^*=\LL (\A^*,b,a, \deab^*)$.
\end{enumerate}
\end{proposition}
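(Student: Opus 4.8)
The plan is to reduce the abstract adjoint computation to linear algebra in $\BC^n$, exploiting the duality between the scales $\cDA\subseteq H\subseteq \A H$ and $\cD(\A^*)\subseteq H\subseteq \A^*H$ together with the identifications $\cD(\A^*)=(\A H)^*$, $\cDA=(\A^*H)^*$. Two facts drive everything. First, for $y_0\in\cDA$ and $\eta\in\A^*H$ one has $\langle y_0,\eta\rangle=\langle \A y_0,(\A^*)^{-1}\eta\rangle_H$, so that $b^*y_0=\big(\langle \A y_0,(\A^*)^{-1}b\,e_j\rangle_H\big)_j$ and, dually, $a^*g_0=\big(\langle \A^* g_0,\A^{-1}a\,e_j\rangle_H\big)_j$; this lets me move $\A$ and $\A^*$ across inner products and convert the defining relations into pairings with $b^*y_0$, $a^*g_0$. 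Second, since $\rank b=n$ the map $b^*\colon\cDA\to\BC^n$ is onto, so in every admissible $y=y_0+\A^{-1}ac$ the vector $c$ may be prescribed arbitrarily in $\BC^n$ (choose $y_0$ with $b^*y_0=-\deab c$); I will use this repeatedly to read off identities in $\BC^n$ from identities holding for all admissible $y$.

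For part (1) I would compute $\cD(\LL)^\perp$ directly, so that density becomes $\cD(\LL)^\perp=\{0\}$. Testing a putative $g\in\cD(\LL)^\perp$ against the admissible vectors with $c=0$ (i.e. $y_0\in\cDA$ with $b^*y_0=0$) and writing $z=\A y_0$, which ranges over all $z\in H$ orthogonal to the vectors $(\A^*)^{-1}b\,e_j$, the relation $\langle y_0,g\rangle=0$ becomes $\langle z,(\A^*)^{-1}g\rangle=0$ for all such $z$; this forces $(\A^*)^{-1}g\in\spa\{(\A^*)^{-1}b\,e_j\}$ and hence, by injectivity of $(\A^*)^{-1}$, that $g=bd$ for some $d\in\BC^n$ with $bd\in H$ (equivalently $(\A^*)^{-1}bd\in\cD(\A^*)$). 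Feeding this form back into $\langle y,g\rangle=0$ for general admissible $y$ and letting $c$ run over $\BC^n$ isolates the single extra condition $\deab^*d=a^*\big((\A^*)^{-1}bd\big)$. Thus $\cD(\LL)^\perp=\{\,bd: bd\in H,\ \deab^*d=a^*((\A^*)^{-1}bd)\,\}$, and since $b$ is injective this space is $\{0\}$ exactly when $(\admnst)$ holds, which proves (1).

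For part (2) I would prove the two inclusions between $\LL^*$ and $M:=\LL(\A^*,b,a,\deab^*)$, noting first that $(b,a,\deab^*)$ are admissible $n$-data for $\A^*$ precisely because $(\admnst)$ is condition $(\admn)$ for $(\A^*,b,a,\deab^*)$. For $M\subseteq\LL^*$: take $g=g_0+(\A^*)^{-1}bd$ with $g_0\in\cD(\A^*)$ and $\deab^*d+a^*g_0=0$, expand $\langle\LL y,g\rangle=\langle \A y_0,g_0\rangle+\langle \A y_0,(\A^*)^{-1}bd\rangle$, and move operators across to rewrite it as $\langle y_0,\A^*g_0\rangle+\langle b^*y_0,d\rangle$; subtracting the analogous expansion of $\langle y,\A^*g_0\rangle$ leaves $\langle b^*y_0,d\rangle-\langle c,a^*g_0\rangle$, which vanishes after substituting $b^*y_0=-\deab c$, $a^*g_0=-\deab^*d$ and using $\langle\deab c,d\rangle=\langle c,\deab^*d\rangle$; hence $g\in\cD(\LL^*)$ and $\LL^*g=\A^*g_0=Mg$. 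For the reverse inclusion $\LL^*\subseteq M$: given $g\in\cD(\LL^*)$ with $h=\LL^*g$, set $g_0:=(\A^*)^{-1}h\in\cD(\A^*)$ and $u:=g-g_0$; the identity $\langle \A y_0,g\rangle=\langle y,h\rangle$ rearranges to $\langle \A y_0,u\rangle=\langle c,a^*g_0\rangle$ for all admissible $(y_0,c)$. Taking $b^*y_0=0$, $c=0$ shows $u\perp \A(\cDA\cap\ker b^*)$, hence $u\in\spa\{(\A^*)^{-1}b\,e_j\}$, i.e. $u=(\A^*)^{-1}bd$; plugging this back and letting $c$ range over $\BC^n$ yields $\deab^*d+a^*g_0=0$, which is exactly membership of $g$ in $\cD(M)$ with $Mg=\A^*g_0=h$. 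This gives (2).

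The main obstacle I expect is not a single deep step but the careful bookkeeping forced by the three-term scales and by the fact that $a,b$ need not lie in $H$: one must consistently distinguish genuine $H$-inner products from the $(\cDA,\A^*H)$- and $(\cD(\A^*),\A H)$-dualities, keep the conjugation conventions straight so that $\deab$ is replaced by its matrix adjoint $\deab^*$ (rather than its transpose), and correctly use the equivalence $(\A^*)^{-1}bd\in\cD(\A^*)\iff bd\in H$ when contrasting $\cD(\LL)^\perp$ (where $g=bd$ itself must lie in $H$) with $\cD(\LL^*)$ (where only the correction term $(\A^*)^{-1}bd$ appears, and $bd$ may lie outside $H$). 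Establishing the pairing identity in fact~(i) and the surjectivity of $b^*$ cleanly at the outset is what renders the remaining manipulations routine.
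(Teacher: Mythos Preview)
Your proof of part (1) is essentially the same as the paper's: both compute $\cD(\LL)^\perp$ by first testing against the subspace $N=\{y_0\in\cDA:b^*y_0=0\}$ to force the shape $g=bd\in H$, then use that $c$ can be prescribed arbitrarily (surjectivity of $b^*$) to extract the scalar condition $\deab^*d=a^*\big((\A^*)^{-1}bd\big)$.

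For part (2) the two proofs diverge on the reverse inclusion $\LL^*\subseteq\LL_*:=\LL(\A^*,b,a,\deab^*)$. You prove it directly: given $g\in\cD(\LL^*)$ with $h=\LL^*g$, you set $g_0=(\A^*)^{-1}h$, show $u:=g-g_0$ is orthogonal to $\A(\cDA\cap\ker b^*)$ and hence lies in $\spa\{(\A^*)^{-1}b e_j\}$, then read off $\deab^*d+a^*g_0=0$ by letting $c$ run over $\BC^n$. The paper instead argues indirectly via graphs: the inclusion $\LL_*\subseteq\LL^*$ (proved by the same pairing computation as yours) gives $\Gr(\LL_*)\subseteq\Gr(\LL^*)$; then one observes that the operation $\Gr(T)\mapsto\Gr(T^*)=\big(W\Gr(T)\big)^\perp$ (with $W(x,y)=(-y,x)$) preserves the ``balanced rank $n$'' relation, so $\LL^*$ is itself a balanced rank $n$ perturbation of $\A^*$, and the inclusion of graphs between two such perturbations forces equality by a dimension count. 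Your direct argument is more self-contained and exhibits the decomposition $g=g_0+(\A^*)^{-1}bd$ explicitly; the paper's dimension argument is shorter but relies on the small abstract lemma about orthogonal complements preserving the codimension condition in \eqref{eq-dims}.
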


For the proof of Proposition \ref{Pr-adjoints-n} see Appendix 2.

The next proposition is a kind of a uniqueness assertion.

\begin{proposition}
\label{uniqnss}
\begin{enumerate}
\item
For any invertible operators
$\tau_1, \tau_2$ on $\BC^n$,
$\LL (\A , a,b, \deab)=\LL (\A , a\tau_1^{-1}, b\tau_2, \tau_2^*\deab \tau_1^{-1})$\textup;

\item
Conversely, if $a,b, a_1, b_1:\BC^n\to \A H$ are rank $n$ operators and
$\LL (\A , a,b, \deab)=\LL (\A , a_1,b_1, \deab_1)$, then
there are invertible operators
$\tau_1, \tau_2$ on $\BC^n$
such that
$a_1=a\tau_1^{-1}$, $b_1=b\tau_2$, $\deab_1=\tau_2^*\deab \tau_1^{-1}$.
\end{enumerate}
\end{proposition}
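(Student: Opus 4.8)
The first part is a direct verification. Writing $a_1=a\tau_1^{-1}$, $b_1=b\tau_2$, $\deab_1=\tau_2^*\deab\tau_1^{-1}$, I would substitute into the definition \eqref{2app} of $\cD\big(\LL(\A,a_1,b_1,\deab_1)\big)$. For $y=y_0+\A^{-1}a_1c_1$ the identity $\A^{-1}a_1c_1=\A^{-1}a(\tau_1^{-1}c_1)$ suggests the change of parameter $c=\tau_1^{-1}c_1$, which is a bijection of $\BC^n$ since $\tau_1$ is invertible. Using $(b\tau_2)^*=\tau_2^*b^*$ (immediate from the duality pairing between $\A^*H$ and $\cDA=(\A^*H)^*$), the constraint $\deab_1c_1+b_1^*y_0=0$ becomes $\tau_2^*\big(\deab c+b^*y_0\big)=0$, equivalent to $\deab c+b^*y_0=0$ because $\tau_2^*$ is invertible. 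Hence the two domains coincide, and on the common domain both operators send $y$ to $\A y_0$. This proves $(1)$.

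For part $(2)$ the plan is to recover the data from the operator. Since $0\notin\si(\A)$, condition $(\admn)$ holds for both data sets, so every $y\in\cD(\LL)$ has a unique decomposition $y=y_0+\A^{-1}ac$; I would introduce the linear map $\pi\colon\cD(\LL)\to\BC^n$, $\pi(y)=c$. Its kernel is $\{y_0\in\cDA:b^*y_0=0\}$, and since $b$ has rank $n$ and the pairing of $\A^*H$ with $\cDA$ is perfect, $b^*|_{\cDA}$ is onto, whence $\pi$ is surjective. The crucial point is to read the relation $\LL y=\A y_0$ in the larger space $\A H$: as $\A$ acts as an isometry $H\to\A H$ with $\A\A^{-1}=\mathrm{id}$ on $\A H$, one gets, for $y\in\cD(\LL)$,
\[
\A y=\A y_0+ac=\LL y+a\,\pi(y)\qquad\text{in }\A H.
\]
Denoting by $\pi_1$ the analogous decomposition map for $(a_1,b_1,\deab_1)$ and using that the two operators are equal (so their values on $\cD(\LL)=\cD(\LL_1)$ agree), subtraction of the two such identities yields $a\,\pi=a_1\,\pi_1$ on $\cD(\LL)$.

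Since $\pi,\pi_1$ are surjective and $a,a_1$ injective, this forces $\Ran a=\Ran a_1=:W$, an $n$-dimensional subspace of $\A H$. I would then set $\tau_1^{-1}:=a^{-1}a_1\colon\BC^n\to\BC^n$ (viewing $a,a_1$ as isomorphisms onto $W$); it is invertible with $a_1=a\tau_1^{-1}$, and injectivity of $a$ turns $a\pi=a_1\pi_1$ into $\pi_1=\tau_1\pi$. In particular $\ker\pi_1=\ker\pi$, i.e. $\ker(b_1^*|_{\cDA})=\ker(b^*|_{\cDA})$; as both are surjective with the same kernel there is an invertible $\eta$ on $\BC^n$ with $b_1^*=\eta b^*$, which dualizes to $b_1=b\eta^*$, so I put $\tau_2:=\eta^*$. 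Finally, for each $c\in\BC^n$ I pick $y\in\cD(\LL)$ with $\pi(y)=c$; the two descriptions of the common domain give simultaneously $\deab c+b^*y_0=0$ and $\deab_1\tau_1c+\tau_2^*b^*y_0=0$. Eliminating $b^*y_0=-\deab c$ leaves $(\deab_1\tau_1-\tau_2^*\deab)c=0$ for all $c$, hence $\deab_1=\tau_2^*\deab\tau_1^{-1}$, which completes the argument.

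I expect the main obstacle to be the careful bookkeeping in the scale $\cDA\subseteq H\subseteq\A H$: making rigorous the identity $\A y=\LL y+a\,\pi(y)$ in $\A H$, and correctly transporting $b_1^*=\eta b^*$ between the maps $b^*\colon\cDA\to\BC^n$ and $b\colon\BC^n\to\A^*H$ through the identification $(\A^*H)^*=\cDA$. The technical underpinnings are the uniqueness of the decomposition (furnished by $(\admn)$) and the surjectivity of $\pi$, equivalently of $b^*|_{\cDA}$; once these are in place, the extraction of $\tau_1$, $\tau_2$ and the relation for $\deab_1$ is purely algebraic.
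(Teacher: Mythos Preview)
Your proposal is correct. The paper itself omits the proof of this proposition, stating that ``the proofs of Propositions \ref{uniqnss}--\ref{prop_shift} are straightforward, and we omit them,'' so there is no argument to compare against; your derivation supplies precisely the kind of direct verification the authors had in mind, and the key intrinsic identity $\A y=\LL y+a\,\pi(y)$ in $\A H$ (together with surjectivity of $\pi$) is exactly the right device to recover $a$, $b$, and $\deab$ up to the stated ambiguity.
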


The following proposition relates singular finite rank perturbations with
inverses of usual finite rank perturbations of bounded selfadjoint operators.

\begin{proposition}
\label{L inverse}
\begin{enumerate}

\item
Given operators $a,b$ of rank $n$
and an invertible $\deab$, the operator
$\A^{-1}-(\A^{-1}a)\deab^{-1}(b^*\A^{-1})$
has a trivial kernel $($and therefore has an inverse in the algebraic
sense, defined on its image$)$
if and only if the triple
$(a,b,\deab)$ satisfies $(\admn)$.

\item
If the triple $(a,b,\deab)$ satisfies $(\admn)$
and $\deab$ is invertible, then
$$
\LL (\A , a,b, \deab)=\big(\A^{-1}-(\A^{-1}a)\deab^{-1}(b^*\A^{-1})\big)^{-1}.
$$
Therefore, in this case $\LL (\A , a,b, \deab)$ is an $($unbounded$)$
algebraical inverse  to a bounded rank $n$ perturbation of the
bounded operator $\A^{-1}$.
\end{enumerate}
\end{proposition}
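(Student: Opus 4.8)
The plan is to prove both parts simultaneously by exhibiting $\LL=\LL(\A,a,b,\deab)$ as the algebraic inverse of the bounded operator $B\defin \A^{-1}-(\A^{-1}a)\deab^{-1}(b^*\A^{-1})$. First I would record that $B$ is genuinely a bounded operator on $H$: since $0\notin\si(\A)$, the map $\A^{-1}\colon H\to\cDA$ is bounded, $b^*\colon\cDA\to\BC^n$ is bounded by the definition of $n$-data, and $\A^{-1}a\colon\BC^n\to H$ is bounded; hence $b^*\A^{-1}\colon H\to\BC^n$ is bounded and $B$ makes sense as a rank $\le n$ perturbation of $\A^{-1}$. All the manipulations below then take place within the scale $\cDA\subseteq H\subseteq\A H$.

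For part (1) I would describe $\ker B$ explicitly. For $x\in H$ set $w=\A^{-1}x\in\cDA$ and $c=\deab^{-1}b^*w\in\BC^n$; then $Bx=w-(\A^{-1}a)\deab^{-1}b^*w=w-\A^{-1}ac$, so $Bx=0$ is equivalent to $w=\A^{-1}ac$. If $x\in\ker B$ then $\A^{-1}ac=w\in\cDA$ and $\deab c=b^*w=b^*(\A^{-1}ac)$, so $(\admn)$ forces $c=0$, whence $w=0$ and $x=\A w=0$; thus $(\admn)$ implies $B$ is injective. Conversely, if $(\admn)$ fails, choose $c\neq0$ with $\A^{-1}ac\in\cDA$ and $\deab c=b^*(\A^{-1}ac)$, put $w=\A^{-1}ac$ and $x=\A w$, and verify $Bx=0$. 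Here $x\neq0$: otherwise $w=\A^{-1}x=0$ would give $\A^{-1}ac=0$ and hence $ac=0$, contradicting $c\neq0$ and the injectivity of $a$ (which holds because $\rank a=n$). This establishes the equivalence in (1), and in particular $B^{-1}$ is then well defined on $\Ran B$.

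For part (2), assuming $(\admn)$ and $\deab$ invertible, I would identify $\LL$ with $B^{-1}$ by proving the two identities $\LL Bx=x$ for all $x\in H$ and $B\LL y=y$ for all $y\in\cDL$. For the first, given $x\in H$ put $y_0=\A^{-1}x\in\cDA$ and $c=-\deab^{-1}b^*y_0$; then $\deab c+b^*y_0=0$ and $Bx=y_0+\A^{-1}ac$, so $Bx\in\cDL$ with $\LL(Bx)=\A y_0=x$. For the second, if $y=y_0+\A^{-1}ac\in\cDL$ with $\deab c+b^*y_0=0$, then $\deab^{-1}b^*y_0=-c$, $\LL y=\A y_0$, and $B(\A y_0)=y_0-(\A^{-1}a)\deab^{-1}b^*y_0=y_0+\A^{-1}ac=y$. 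The first identity shows $\Ran B\subseteq\cDL$ and $\LL=B^{-1}$ on $\Ran B$; the second shows $\cDL\subseteq\Ran B$. Hence $\cDL=\Ran B$ and $\LL=B^{-1}$, which is (2).

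The computations themselves are routine once the boundedness of $B$ and the meaning of $b^*\A^{-1}$ within the scale of spaces are settled. The points requiring the most care are the converse direction of part (1)—producing a nonzero kernel vector from a failure of $(\admn)$ and excluding $x=0$ via $\rank a=n$—and, in part (2), checking that the two identities yield the genuine \emph{equality} $\cDL=\Ran B$ of domains rather than merely mutual containment of the operators' graphs.
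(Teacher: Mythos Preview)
Your argument is correct. The paper does not actually prove this proposition: it states that ``the proofs of Propositions~\ref{uniqnss}--\ref{prop_shift} are straightforward, and we omit them.'' Your write-up supplies exactly the routine verification the authors had in mind---checking $\ker B$ against the defining condition $(\admn)$ via the substitution $w=\A^{-1}x$, $c=\deab^{-1}b^*w$, and then verifying $\LL B=I_H$ and $B\LL=I_{\cDL}$ directly from the definition \eqref{2app}---so there is nothing to compare.
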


\begin{proposition}
\label{prop_shift}
For any $\la\in \rho(\A )$,
$$
\LL (\A ,a,b,\deab)-\la I=\LL \big(\A -\la I, a,b, \be(\la)\big),
$$
where $ \be (\la) =\deab+\la b^*(\A -\la)^{-1}\A^{-1} a$.
\end{proposition}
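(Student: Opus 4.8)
The plan is to prove the operator identity directly from the definition \eqref{2app}, by setting up an explicit bijection between the two parametrizations of the domains and checking that the two operators act in the same way. The basic tool is the resolvent identity
\[
(\A-\la I)^{-1}=\A^{-1}+\la(\A-\la I)^{-1}\A^{-1},
\]
which holds on $H$ since $\la\in\rho(\A)$, together with the fact that $(\A-\la I)^{-1}$ maps $H$ boundedly onto $\cDA=\cD(\A-\la I)$. Because $(\admn)$ ensures $\A^{-1}ac\in\cDA$ for all $c$, the composition $(\A-\la I)^{-1}\A^{-1}a\colon\BC^n\to\cDA$ is well defined; in particular the matrix $b^*(\A-\la I)^{-1}\A^{-1}a$ entering $\deab(\la)$ makes sense, and $a$ (resp.\ $b$) is a legitimate element of the extended space $(\A-\la I)H$ (resp.\ $(\A^*-\bar\la I)H$) of the same rank $n$, so that $(a,b,\deab(\la))$ is admissible data for $\A-\la I$ once $(\admn)$ is verified.

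For the bulk of the argument I would write $\LL=\LL(\A,a,b,\deab)$, $\LL'=\LL(\A-\la I,a,b,\deab(\la))$, and take an arbitrary $y=y_0+\A^{-1}ac\in\cDL$, so that $y_0\in\cDA$ and $\deab c+b^*y_0=0$. Using the resolvent identity I would rewrite $y$ in the form demanded by $\LL'$, keeping the same $c$:
\[
y=y_0'+(\A-\la I)^{-1}ac,\qquad y_0'\defin y_0-\la(\A-\la I)^{-1}\A^{-1}ac\in\cDA .
\]
The only nontrivial check is that the linear constraint transforms correctly: since $b^*y_0'=b^*y_0-\la\, b^*(\A-\la I)^{-1}\A^{-1}ac$, the term $\la\, b^*(\A-\la I)^{-1}\A^{-1}a$ built into $\deab(\la)$ cancels it, and one is left with $\deab(\la)c+b^*y_0'=\deab c+b^*y_0=0$. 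Hence $y\in\cD(\LL')$; the map $(y_0,c)\mapsto(y_0',c)$ is visibly invertible, so the two domains coincide.

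To match the actions I would apply $\A-\la I$ to $y_0'$ and use $(\A-\la I)(\A-\la I)^{-1}=I$:
\[
\LL'y=(\A-\la I)y_0'=\A y_0-\la\A^{-1}ac-\la y_0=\A y_0-\la y=\LL y-\la y=(\LL-\la I)y .
\]
Finally I would confirm $(\admn)$ for the shifted data: the hypothesis $\deab(\la)c=b^*(\A-\la I)^{-1}ac$ reduces, after the same cancellation, to $\deab c=b^*\A^{-1}ac$, which forces $c=0$ by $(\admn)$ for $(a,b,\deab)$ relative to $\A$. Thus $\LL'$ is correctly defined and $\LL-\la I=\LL'$.

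The algebra here is a routine cancellation driven by the single resolvent identity, so I do not expect a genuine difficulty there. The point that really needs care — and the one I would treat most explicitly — is the bookkeeping in the rigged spaces $\A H$ and $(\A-\la I)H$: one must make sure that $a,b$ are admissible data for the shifted operator, that each resolvent expression lands in $\cDA$ where $b^*$ is defined, and that the domains $\cD(\A)$ and $\cD(\A-\la I)$ genuinely coincide. Once these identifications are fixed, equality of domains, of actions, and of the admissibility conditions all follow from the computation above.
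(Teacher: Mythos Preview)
Your argument is correct and is exactly the direct verification the paper has in mind when it calls the proof ``straightforward'' and omits it. One small slip in the preamble: you write that $(\admn)$ ensures $\A^{-1}ac\in\cDA$ for all $c$, but $(\admn)$ says no such thing (and this is typically false --- that is the whole point of \emph{singular} perturbations); what you actually need, and what holds by definition of $a:\BC^n\to\A H$, is only that $\A^{-1}ac\in H$, whence $(\A-\la I)^{-1}\A^{-1}ac\in\cDA$ because $(\A-\la I)^{-1}$ maps $H$ into $\cDA$. The rest of your computation is unaffected.
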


The proofs of Propositions \ref{uniqnss} -- \ref{prop_shift} are straightforward, and we
omit them.
\bigskip



\section{$d$-subordination of operators and Macaev type results on completeness
of rank $n$ perturbations of selfadjoint operators}
\label{d-subord+Mats}

In this section we prove a Macaev-type completeness theorem for
finite rank perturbations of selfadjoint operators (both usual and
singular ones). First we discuss rank $n$ perturbations of a
compact selfadjoint operator.

\begin{definition}
Let $\LL_1\in B(H_1)$, $\LL_2\in B(H_2)$ be two bounded Hilbert
space operators. We say that $\LL_2$ \textit{is $d$-subordinate}
to $\LL _1$ (and write $\LL _1\precd \LL _2$) if there exists a
bounded linear operator $Y:H_1\to H_2$, which intertwines $\LL _1$
with $\LL _2$ and has a dense range:
$$
Y\LL _1=\LL _2Y; \qquad \clos\Ran Y=H_2.
$$
\end{definition}

In this situation, if $k$ is an eigenvector of $\LL _1$, then $Yk$
is an eigenvector of $\LL _2$, and a similar assertion holds for
root vectors. It follows that
\beqn
\label{d-subord}
\LL _1\precd
\LL _2,\; \text{$\LL _1$ is complete} \enspace \implies \enspace
\text{$\LL _2$ is complete.}
\neqn

Let $\A $ be a compact, not necessarily cyclic
selfadjoint operator, acting on a Hilbert space. Without loss of
generality, we put $\A$ to be the multiplication operator $M_x$ on
a direct integral of Hilbert spaces $\cH\defin\int^\oplus H(x)\,
d\mu (x)$, where $\mu$ is a (discrete)
positive measure on $\BR$. We will
assume that $\ker \A = 0$, then $\mu(\{0\})=0$.

Let
$$
\LL =\A +ab^*, \quad \text{where $a,b:\BC^n\to \cH$}
$$
be a rank $n$ perturbation of $\A$.
Let $\{e_j\}_{j=1}^n$ be the
standard basis in $\BC^n$.
Put $a_j(x)=(a e_j)(x)\in H(x)$,
$|a_j|(x)=\|a_j(x)\|_{H(x)}$ and $|a|(x)=\big(\sum_j(|a_j(x)|^2 \big)^{1/2}$.
Define $b_j(x)$, $|b_j|(x)$ and $|b|(x)$ similarly.
Then $|a|$, $|b|$ are functions in $\Elltwo(\mu)$.

We call $\LL$ \textit{a rank  $n$ generalized weak perturbation of
$\A$} if $\ker \LL =0$ and $|a|\cdot|b|\in x\,\Ellone(\mu)$. For these
perturbations, we define a matrix \beqn \label{def-omega} \omega=
\omega(\A ,a,b)= \big( \omega_{jk} \big)_{j,k=1}^n, \quad
\text{where} \enspace \omega_{jk} = \int_{\BR} x^{-1} \langle
a_j(x), b_k(x)\rangle_{H(x)} \, d\mu(x). \neqn The following
statement is an easy consequence of Macaev's theorem and of the
observation \eqref{d-subord}.

\begin{proposition}
\label{gen_weak_Mats} Suppose that
$\A =\A^*$ is compact, $\ker \A =0$ and $\LL $ is a rank $n$
generalized weak perturbation of $\A$. If the $n\times n$ matrix
$I_n+\om$ is invertible, then $\LL $ and $\LL^*$ are complete.
\end{proposition}

\begin{proof}[Proof]
Consider a scalar bounded function
$$
\psi(x)=
\begin{cases}
1, & \quad  \text{if $(|a|\cdot|b|)(x)=0$ or $|x|\cdot |a|(x)/|b|(x)>1$},  \\
\root\of{\vphantom{\big|}|x|\cdot |a|(x)/|b|(x)}, & \quad
\text{if $0<|x|\cdot |a|(x)/|b|(x)   \le   1$},
\end{cases}
$$
and define the functions
$\wt a_j=a_j/\psi$ and $\wt b_j= \psi\cdot b_j/x$. It is easy to check the following facts.

\smallskip

(1) $|\wt a_j|, |\wt b_j| \in \Elltwo(\mu)$,
$j=1,\dots, n$, so that the $n$-tuples of functions
$\{\wt a_j\}_1^n$, $\{\wt b_j\}_1^n$ define
operators $\wt a, \wt b:\BC^n\to\cH$;

(2) The bounded operator $Y=M_\psi$ on $\cH$ commutes with $\A
=M_x$ and has a dense range;

(3) $Y\wt \LL = \LL Y$, where $\wt \LL =(I+\wt a \wt b^*)\A$.

Notice that $\om_{jk}=\langle \wt a_j, \wt b_k \rangle_{\cH}$.
Since $I_n+\om$ is invertible, it follows that
$\ker (I+\wt a \wt b^*)=0$. By Macaev's theorem,
$\wt \LL$ is complete. Since $\wt \LL \precd \LL$, $\LL$ also is complete.
\end{proof}
\medskip

Note that Theorem~\ref{genweak0} is a special case of Proposition
\ref{gen_weak_Mats} with $n=1$.
\medskip

Now we prove a counterpart of Proposition \ref{gen_weak_Mats} for
singular perturbations. We call $\LL $ a \textit{rank $n$
generalized weak singular perturbation of $\A $} if $0\notin
\sigma(\A )$ and $x^{-1} |a|\cdot |b| \in \Ellone(\mu)$. For these
perturbations, we will also use \eqref{def-omega} to define an
$n\times n$ matrix $\om=\om(\A ,a,b)$. We remark that if
$\deab-\om$ is invertible, then conditions  $(\admn)$  and
$(\admnst)$  hold, which implies that $\LL $ and $\LL^*$ are
correctly defined.

\begin{proposition}
\label{prp-Mats-type-sing} Suppose that $\LL =\LL (\A ,a,b,
\deab)$ is a rank $n$ generalized weak singular perturbation of a
selfadjoint operator $\A $ with compact resolvent. If the matrix
$\deab-\om$ is invertible, then $\LL^*$ is correctly defined, and
$\LL $ and $\LL^*$ are complete.
\end{proposition}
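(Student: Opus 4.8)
The plan is to reduce the statement to Proposition~\ref{gen_weak_Mats} (hence to Macaev's theorem) by realizing $\LL$, after a real shift, as the inverse of a bounded rank $n$ generalized weak perturbation of a compact selfadjoint operator. The two bridges are Proposition~\ref{prop_shift} (translation) and Proposition~\ref{L inverse} (inversion); the final transfer of completeness rests on the elementary fact that an injective operator and its inverse share the same eigenvectors and root vectors, with reciprocal eigenvalues.

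First I would dispose of the well-definedness. By the remark just before the statement, invertibility of $\deab-\wt\om$ (where $\wt\om=\om(\A,a,b)$) yields both $(\admn)$ and $(\admnst)$, so $\LL$ is densely defined, $\LL^*$ exists, and $\LL^*=\LL(\A,b,a,\deab^*)$ by Proposition~\ref{Pr-adjoints-n}. Next I would fix a \emph{real} $\la\in\rho(\A)$ for which the matrix $\deab(\la)=\deab+\la\, b^*(\A-\la)^{-1}\A^{-1}a$ of Proposition~\ref{prop_shift} is invertible. Such $\la$ exists: $\deab(\la)$ is analytic on the connected set $\rho(\A)$, and letting $\la\to i\infty$ dominated convergence gives $\deab(\la)\to\deab-\wt\om$, which is invertible; hence $\det\deab(\la)\not\equiv0$ and is nonzero off a discrete subset of $\rho(\A)$, in particular at some real point. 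For such $\la$, Proposition~\ref{prop_shift} gives $\LL-\la=\LL(\A-\la,a,b,\deab(\la))$, and Proposition~\ref{L inverse}(2) applies (condition $(\admn)$ is preserved under the shift, and $\deab(\la)$ is invertible): $\LL-\la=\LL_0^{-1}$, where $\LL_0=\A_\la+a_0b_0^*$ is a bounded rank $n$ perturbation of the compact selfadjoint operator $\A_\la:=(\A-\la)^{-1}$ (note $\ker\A_\la=0$), with $a_0=(\A-\la)^{-1}a$ and $b_0=-(\A-\la)^{-1}b\,(\deab(\la)^*)^{-1}$ in $H$.

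I would then verify that $\LL_0$ is a rank $n$ generalized weak perturbation of $\A_\la$ with invertible Macaev matrix, so that Proposition~\ref{gen_weak_Mats} applies. Triviality of $\ker\LL_0$ is exactly Proposition~\ref{L inverse}(1) combined with $(\admn)$. The integrability requirement for $a_0,b_0$ in the spectral variable of $\A_\la$ becomes, after the change of variable $y=(x-\la)^{-1}$, the condition $\int|a|\,|b|\,|x-\la|^{-1}\,d\mu<\infty$, which follows from the generalized weakness of $\LL$ (namely $|a|\,|b|/x\in L^1(\mu)$) together with $\la\notin\supp\mu$. Finally, one computes the Macaev matrix of $\LL_0$: using the shift-invariance identity $\deab(\la)-\om(\A-\la,a,b)=\deab-\wt\om$ and the explicit $a_0,b_0$, a direct computation gives $I_n+\om(\A_\la,a_0,b_0)=(\deab-\wt\om)\,\deab(\la)^{-1}$ (up to transposition), which is invertible by hypothesis. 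Proposition~\ref{gen_weak_Mats} then yields that $\LL_0$ and $\LL_0^*$ are complete. Since $\LL_0$ is compact and injective, its eigenvectors and root vectors coincide with those of $\LL_0^{-1}=\LL-\la$, hence with those of $\LL$, so $\LL$ is complete; and $(\LL_0^*)^{-1}=(\LL_0^{-1})^*=(\LL-\la)^*=\LL^*-\la$ (as $\la$ is real), so completeness of $\LL_0^*$ gives completeness of $\LL^*$.

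I expect the main obstacle to be the Macaev-matrix step. In the generalized weak regime $a,b$ need not lie in $L^2(\mu)$, and there the invertibility of $I_n+\om(\A_\la,a_0,b_0)$ is \emph{strictly stronger} than $\ker\LL_0=0$: a null vector of $I_n+\om$ may fail the range condition that a genuine kernel element of $\LL_0$ must satisfy, so this invertibility cannot be read off from $\ker\LL_0=0$ alone. It must be obtained from the explicit identity above, with careful bookkeeping of the transposes and adjoints entering the definitions of $\wt\om$ and of $b_0$.
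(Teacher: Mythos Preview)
Your proposal is correct and follows essentially the same route as the paper: shift by a real $\la$ to make $\deab(\la)$ invertible (via analyticity and the limit $\deab(iy)\to\deab-\wt\om$), invoke Proposition~\ref{L inverse} to write $\LL-\la$ as the inverse of a bounded rank~$n$ perturbation of the compact selfadjoint $(\A-\la)^{-1}$, compute the Macaev matrix as $(\deab-\wt\om)\deab(\la)^{-1}$ via the shift-invariance of $\deab-\wt\om$, and apply Proposition~\ref{gen_weak_Mats}. The only cosmetic difference is that the paper absorbs the $\deab(\la)^{-1}$ factor into $\wh a$ rather than into $b_0$, and is slightly terser about the integrability check you spell out.
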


\begin{proof} [Proof] 
Let us apply first Proposition~\ref{prop_shift}. The Lebesgue
dominated convergence theorem implies that $\be(iy)\to \deab-\om$
as $y\to  \infty$. Hence for all but a discrete set of $\la$'s in
$\BC^+$, $\be(\la)$ is invertible and therefore the same holds
true for all $\la$'s on the real line, except a discrete set in
$\BR\sm \si(\A )$ (recall that $\si(\A )$ is discrete).

By substituting $\A $ with $\A -\la I$ for some real $\la$, if
necessary, we can assume that $\be(0)=\deab$ is invertible. By
Proposition~\ref{L inverse}, $\LL ^{-1}=\A^{-1}+\wh a \wh b^*$,
where $\wh a = - \A^{-1}a\deab^{-1}$ and $\wh b=\A^{-1}b$. Put
$\tilde\om=\om(\A^{-1}, \wh a, \wh b)$. Then it is easy to check
that $\wh\om =-  \om \deab^{-1}$. Hence the matrix $\wh\om
+I_n=(\deab- \om)\deab^{-1}$ is invertible. By
Proposition~\ref{gen_weak_Mats}, $\LL ^{-1}$ and $(\LL^*)^{-1}$
are complete. Hence $\LL $ and $\LL^*$ are complete.
\end{proof}
\bigskip



\section{Basic Functional Model for singular rank one perturbations}

\subsection{Abstract singular rank one perturbation}
One of the main objects of our study are singular rank one perturbations of an
unbounded selfadjoint operator $\A $, given by
\beqn
\label{defA}
H\defin \Elltwo(\mu), \qquad (\A f)(x)=x f(x), \enspace x\in \Elltwo(\mu).
\neqn
By the spectral theorem, any selfadjoint operator  of spectral
multiplicity $1$ can be represented in this form. We assume that $\mu$
is a \textit{singular measure} and that ${\rm \supp}\, \mu \ne \RR$.
Without loss of generality we may assume then that
$0\notin {\rm \supp}\, \mu$.

Notice that for $\A $ given in this form, $\A H=x \Elltwo(\mu)$.
Hence the $1$-data for $\A$ is just a triple $(a,b,\deab)$,
where
\beqn
\label{cond-a-b}
\frac{a}{x}, \  \frac{b}{x}  \in \Elltwo(\mu); \quad \deab\in \BC
\neqn
\big(however, possibly, $a,b \notin \Elltwo(\mu)$\big)
and the condition
$$
\begin{aligned}
\deab&\ne
\int_\BR x^{-1}a(x)\overline{b(x)} \, d\mu(x) \\
&\qquad\qquad\qquad
\text{ in the case when $a(x)\in \Elltwo(\mu)$}
\end{aligned}
\eqno{(\admone)}
$$
is fulfilled.
By \eqref{2app}, the corresponding singular perturbation $\LL =\LL
(\A , a,b,\deab)$ of $\A $ is defined as follows: \beqn
\begin{aligned}
\label{2}
\cDL&\defin
\big\{
y=y_0+c\cdot \A^{-1}a: \\
& \qquad \qquad
c\in \BC,\,
y_0\in \cDA,\,
\deab\, c+\langle y_0, b\rangle=0
\big\};                     \\
\LL  y& \defin \A y_0, \quad y\in \cDL.
\end{aligned}
\neqn

\begin{definitions}
{\rm We call $\LL $ a {\it real type perturbation} of $\A $ if the function
$\bar a b$ and the number $\deab$ are real.
We call  $\LL $
{\it a strong real type perturbation} if, in addition to the above two requirements,
the spectrum $\sigma (\LL )$ is contained in  $\RR$. }
\end{definitions}

The spectrum of any real type perturbation of $\A $ is symmetric
with respect to the real line.
Recently there have been much interest in the study
of nonselfadjoint operators  with real spectrum in
connection with non-Hermitean Hamiltonians that have a space-time
reflection symmetry, see~\cite{Alb_Fei_Kur},~\cite{Bender} and
references therein.

We recall that for any closed operator $\LL$ on $H$,
the adjoint operator $\LL^*$ is well-defined if and only if $\cDL$ is dense in $H$.
Let $\LL =\LL (\A ,a,b, \deab)$ be any singular rank one perturbation of $\A $.
It follows from Proposition~\ref{Pr-adjoints-n}
that $\LL^*$ exists whenever
$$
\begin{aligned}
\deab&\ne
\int_\BR x^{-1}a(x)\overline{b(x)} \, d\mu(x) \\
&\qquad\qquad\qquad \text{ in the case when $b\in \Elltwo(\mu)$}.
\end{aligned}
\eqno{(\admonest)}
$$
Moreover, if $(\admonest)$ holds, then $\LL^*= \LL (\A , b, a, \deab^*)$.


\subsection{Counterparts of main results for singular rank one perturbations}
\label{reform}
By Proposition \ref{L inverse}, completeness problems
for finite rank one perturbations may be reformulated as the
completeness results for singular perturbations which are their inverses.
In this subsection, we state several theorems for singular rank one perturbations,
which are equivalent to the main results from Introduction.

Most of our results will concern the case when $\mu $ is a discrete measure:
$\mu=\sum_{n\in \mathcal{N}} \mu_n\delta_{t_n}$,
where $\lim_{|n|\to\infty} |t_n|=\infty$.
Then the spectrum of $\A $ is
$$
\sigma(\A ) =  \{t_n: n\in \mathcal{N}\}
$$
and has no accumulation points on $\RR$.
In this situation, we will assume that $\{t_n\}$ is strictly
increasing, and the index set $\mathcal{N}$ can be $\BN$, $-\BN$ or $\BZ$
(for doubly infinite sequences). Then we will say that
\textit{$\A $ has a discrete spectrum} and will use the notation
$$
a_n=a(t_n), \qquad b_n=b(t_n).
$$

If   $\LL =\LL (\A ,a,b,\deab)$
is
a singular rank one perturbation of
a cyclic selfadjoint operator with discrete spectrum $\{t_n\}$,
then $\A^{-1}$ is a compact operator
with spectrum $\{s_n\} = \{t_n^{-1}\}$
and $\LL^{-1}$ is a rank one perturbation
of $\A^{-1}$ whenever $\deab \ne 0$.

Generalized weak singular perturbations were introduced in Proposition
\ref{prp-Mats-type-sing}, thus giving a singular perturbation
version of Theorem~\ref{genweak0}. For another version of this statement
(and a different proof) see Theorem \ref{3to1}. We now state an analog
of Theorem \ref{positive0} for singular perturbations.

\begin{theorem}
\label{positive} Suppose that $\LL (\A ,a,b,\deab)$ is a real type
perturbation, $a_n\overline b_n\ge 0$ for all but possibly a
finite number of values of $n$.
Suppose also that $b_n \ne 0$ for any $n$, and
$\sum_{n\in \mathcal{N}} |t_n|^{-1}|a_nb_n|\mu_n = \infty$. Then
$\LL^*$ is correctly defined, and $\LL $ and $\LL^*$ are complete.
\end{theorem}

The following two theorems are counterparts of Theorems
\ref{adjoint0} and \ref{frombb0} for singular rank one perturbations.

\begin{theorem}
\label{noncompleteness2}
For any cyclic selfadjoint operator $\A$ with
discrete spectrum, there exists a strong real type
singular rank one perturbation $\LL=\LL (\A ,a,b,\deab)$ of $\A$, which is not complete,
while its adjoint $\LL^*$  is correctly defined, has trivial kernel and is
complete.
Moreover, the data $(a,b,\deab)$
can be chosen in such a way that $a,b\notin \Elltwo(\mu)$
and the orthogonal complement to
the space spanned by the
eigenvectors of $\LL$ is infinite-dimensional.
\end{theorem}

\begin{theorem}
\label{frombb}
Assume the data $(a, b, \deab)$ satisfy
$a\notin \Elltwo(\mu)$  $($and, thus, $({\rm A })$ holds$)$.
Assume also condition $({\rm A}^*)$.
Let the perturbation $\LL =\LL (\A , a,b, \deab)$
be complete. Then its adjoint $\LL^*$ is also complete
if any of the following conditions is fulfilled:
\smallskip

$($i$)$ $|a_n|^2 \mu_n  \ge C |t_n|^{-N}>0$ for some $N>0$\textup;
\smallskip

$($ii$)$ $|b_n a_n^{-1}| \le C |t_n|^{N}$ for some $N>0$.
\end{theorem}

Analogously, one can reformulate the statements about the spectral synthesis
(Theorems \ref{synthesis} and \ref {sp_sint})
for the case of singular rank one perturbations.


\subsection{Functional model: statement}
In the below functional model for singular rank one perturbations
we always make the following
\vskip.3cm

\noindent \textbf{Assumption.}
The function $a\in (1+|x|)^{-1} \Elltwo(\mu)$ is nonzero and
$b\in (1+|x|)^{-1} \Elltwo(\mu)$ is a {\it cyclic} vector
for the resolvent of $\A $, that is, $b\ne 0$ $\mu$-a.e.
\vskip.3cm

Let $\BC^\pm=\big\{z\in \BC:  \pm \ima z>0\big\}$
denote the upper and the lower half-planes and set $H^2=H^2(\BC^+)$ to be
the Hardy space in $\BC^+$.
For the basic properties of the space $H^2$ we refer to~\cite{hj, ho}.

Define
\beqn
\label{singul2}
\nuu(z)  = \deab + z b^*(\A -z)^{-1}\A^{-1}a                    
         = \deab + \int \bigg(\frac{1}{x-z}
            -\frac{1}{x}\bigg)\, a(x)\overline{b(x)} \, d\mu (x),
\neqn
\beqn
\label{singul1}
\rho(z)
 = \de + zb^*(\A -z)^{-1}\A^{-1}b  = \de + \int \bigg(\frac{1}{x-z}-\frac{1}{x}\bigg) |b(x)|^2
\, d\mu (x),
\neqn
where $\delta$ is an arbitrary real constant.
(The function $\be$ is the same as in
Proposition~\ref{prop_shift}.)  Since $\mu(\{0\})=0$, $zb^*(\A
-z)^{-1}\A^{-1}a\ne \const$, and therefore $\nuu \not\equiv 0$ in
$\BC\sm\BR$.

We set
\begin{align}
\label{the}
\Theta(z)& =  \frac{i-\rho(z)}{i+\rho(z)}\, , \\
\label{defphi}
\phi(z) & =\frac {\nuu(z)} 2\,  (1+\Theta(z)).
\end{align}
%
%
It is easy to see that $\Theta $ and $\phi$ are analytic in $\BC^+$.
Since $\mu$ is a singular measure on $\BR$,
it follows that $\Im \rho(z)\ge 0$ for $z\in {\BC}^+$ and
$\Im \rho(z)= 0$ a.e. on $\BR$. Therefore
$\Theta$ is an inner function in the upper half-plane $\BC^+$
(that is, a bounded analytic function with $|\Theta| =1$ a.e. on $\RR$
in the sense of nontangential boundary values).
Therefore $\Theta$
generates a {\it backward shift invariant} or {\it model
subspace} $K_\Theta \defin H^2\ominus \Theta H^2$ of the Hardy space $H^2$.
These subspaces, as well as their vector-valued generalizations,
play an outstanding role both in function theory and in operator theory.
For their numerous applications we refer to~\cite{nk12}.

The following statement will be our main tool for studying the
rank one perturbations:

\begin{theorem}[a functional model]
\label{rank-one-model}
Let $\LL =\LL (\A ,a,b, \deab)$ be a singular rank one perturbation of $\A $
with $b\ne 0$ $\mu$-a.e.,
and let $\Theta$ and $\phi$ be defined by the above formulas
\eqref{singul2}--\eqref{defphi}.
Then $\Theta$ is analytic in a neighborhood
of $0$,  $1+\Theta \notin H^2$, $\Theta(0) \ne -1$,
\beqn
\label{main0}
\phi\notin H^2, \qquad
\frac{\phi(z) -\phi(i)}{z-i} \in K_\Theta,
\neqn
and $\LL $ is unitary equivalent to the operator $T=T_{\Theta,\phi}$
which acts on the model space $K_\Theta \defin H^2\ominus \Theta H^2$
by the formulas
$$
\cD(T) \defin \{f=f(z)\in K_\Theta: \text{there exists}\ c=c(f)\in \BC:
zf-c\phi\in K_\Theta\},
$$
$$
Tf \defin zf - c \varphi, \qquad f\in \cD(T).
$$
If, moreover, $\LL $ is a real type
singular rank one perturbation, then
\beqn
\label{main}
\Theta = \frac{\phi}{\bar\phi} \qquad \text{a.e. on $\BR$}.
\neqn

Conversely, any inner function $\Theta$ which is analytic in a neighborhood
of $0$ and satisfies $1+\Theta \notin H^2$, $\Theta(0) \ne -1$,
and any function $\phi$ satisfying \eqref{main0}
correspond to some singular rank one perturbation
$\LL = \LL (\A , a, b, \deab)$ of the operator $\A $ of multiplication by the independent
variable in $\Elltwo(\mu)$, where $\mu$ is some singular measure on $\RR$ and
$x^{-1}  a(x)$, $x^{-1}  b(x) \in \Elltwo(\mu)$.
If, moreover, $\Theta$ and $\phi$ satisfy \eqref{main}, then
the  perturbation $\LL $ is of real type.
\end{theorem}

This model is close to Kapustin's model for rank one perturbations of
singular unitary operators~\cite{Kap}.
In fact, one of the initial goals of this work was to extend the
model of Kapustin to the rank one singular perturbations
of unbounded selfadjoint operators.
We refer to~\cite{Kap1} for a more general construction.

Gubreev and Tarasenko in~\cite{Gubr-Tar2010} constructed a model
for operators that have a discrete spectrum (compact resolvent),
are neither dissipative nor anti-dissipative, whose imaginary part
is two-dimensional, under an additional restriction that their
spectrum does not intersect the real axis (last restriction does
not seem to be essential). If we suppose that $a,b\in \Elltwo(\mu)$,
then $\LL =\LL (\A ,a,b,\deab)$ is a rank one bounded perturbation
of $\A $ (see \S 2 below), and it follows that $\LL $ has
two-dimensional imaginary part and is neither dissipative nor
anti-dissipative (unless $\LL =\LL^*$). It is easy to see that,
conversely, any operator
$\A +i(f_1 f_1^* - f_2 f_2^*)$ on $H$, where $\A =\A^*$,
$f_j\in H$ ($j=1,2$) can be represented as $\A_1+ a b^*$, where
$\A_1=\A_1^*$ and $a, b$ are
linear combinations of $f_1, f_2$. Hence the class of operators we
consider is very close to the class of operators in the
paper~\cite{Gubr-Tar2010}, and in fact, in the case of
discrete spetrum their model is essentially the same as ours.
Paper~\cite{Gubr-Tar2010} also contains certain results
on completeness and on Riesz bases
of eigenvectors of the perturbed operator (later on, we will
comment on the connections between these results and ours).

In~\cite{KisNab06}, Kiselev and Naboko study a general operator
with two-dimensional imaginary part by making use of the Naboko
model. A related model for operators of this class that have real
spectrum was constructed by Zolotarev in~\cite{zolot2002} in terms
of certain generalization of the de Branges spaces. The main point
in the works by Kiselev and Naboko \cite{KisNab06}, \cite{KisNab09} and
others is the study of so-called almost Hermitian non-dissipative
operators; this is a stronger requirement than the assumption that
the spectrum is real. By using~\cite[Theorem~3.1]{KisNab09}, it is
easy to check that in the model given by the above theorem, $\LL
(\A ,a,b, \deab)$ is almost Hermitian if and only if $\phi$ is
{\it outer}. The perturbations in Theorems \ref{noncompleteness2}, \ref{adjoint0} and
\ref{synthesis} can be chosen to be almost Hermitian, as is seen
from the proofs.

For a more general model based on operator-valued characteristic
functions see~\cite{Ryzhov}.


\subsection{Eigenfunctions of the model operator}
In view of Theorem~\ref{rank-one-model}, the completeness of $\LL $ (or $\LL^*$)
translates into the completeness of $T$ or $T^*$, respectively.

In what follows, we use the term ``meromorphic'' in the sense
``meromorphic in $\BC$''.
In the case when $\A $ has a discrete spectrum,
$\Theta$ and $\phi$ are meromorphic.
Then any function in $K_\Theta$ is also meromorphic in $\BC$.
As it will be explained in Section \ref{herm--biehl} below,
this situation reduces to the study of de Branges spaces
of entire functions.

In the case when $\Theta$ is meromorphic, the eigenfunctions
of $T$ are of the form $\frac{\phi}{z-\lambda}$, $\lambda\in Z_\phi$,
where $Z_\phi$ is the zero set for $\phi$, while the eigenfunctions
of $T^*$ are just the reproducing kernels of $K_\Theta$
(see Lemma~\ref{lem_eigs} below for a more accurate statement).
So the completeness of eigenfunctions of $T^*$ means that any function
in $K_\Theta$ vanishing on $Z_\phi$ is zero,
that is, that $Z_\phi$ is a {\it uniqueness set} for $K_\Theta$.

As a corollary of Theorem~\ref{rank-one-model},
we  show that under some mild restrictions
any complete and minimal system of reproducing kernels in a model space and its
biorthogonal system can be realized (up to a unitary equivalence)
as the systems of eigenfunctions of some rank one singular perturbation
and of its adjoint.

\begin{theorem}
\label{mincomp}
Let $\Theta$ be an inner function analytic in a neighborhood of $0$
and such that
$1+\Theta \notin K_\Theta$. Let the system of reproducing kernels
$\{k_\lambda\}_{\lambda\in \Lambda}$, $\Lambda\subset \BC^+$
$($or $\Lambda\subset \clos\BC^+$ in the case when
$\Theta$ is meromorphic$)$, be complete and minimal in $K_\Theta$.
Then there exists a function $\phi$
such that $\phi \notin H^2$,
$\phi$ vanishes exactly on the set $\Lambda$ and
$\frac{\phi(z)}{z-\lambda} \in K_\Theta$
for any $\la\in \Lambda$. Moreover, $\phi$ is determined uniquely
up to a multiplicative constant, and the following statements hold.
\begin{enumerate}

\item $\Theta$ and $\phi$ correspond to some singular rank
one perturbation $\LL $ of the
multiplication operator in $\Elltwo(\mu)$
with $x^{-1} a$, $x^{-1} b \in \Elltwo(\mu)$.

\item If, moreover, $\zeta-\Theta \notin H^2$ for any $\zeta\in\BC$,
$|\zeta|=1$, then the adjoint $\LL^*$ of $\LL $ exists and
the system $\{k_\lambda\}_{\lambda\in \Lambda}$
is the set of eigenfunctions of the corresponding
operator $T^*$.

\item If, moreover, $\Theta$ is a meromorphic inner function
and $\Lambda \subset{\RR}$, then
there is a constant $\xi$, $|\xi|=1$, such that
$\Theta$ and $\xi \phi$ correspond to some almost Hermitian singular rank one perturbation.
\end{enumerate}
\end{theorem}

For the proof of Theorem~\ref{mincomp} see Subsection \ref{tse}.

If $\phi$ is the function, defined as above, we will refer to
it as a \textit{generating function}
for the system $\{k_\lambda\}_{\lambda\in \Lambda}$.

Interestingly, completeness problems for a minimal system
of reproducing kernels in a model (or de Branges) space
and for its biorthogonal are, in general,
not equivalent~\cite{bar-belov}. This is the reason for existence
of the unexpected examples of Theorems~\ref{adjoint0}
and \ref{noncompleteness2}.
However, for certain classes of perturbations
the completeness of $T^*$ implies the completeness of $T$,
see Theorems \ref{frombb} and \ref{3to1}.
\bigskip



\section{Basic Functional Model: Proofs}

Let $\LL =\LL (\A , a,b,\deab)$ be a singular rank one perturbation of a
cyclic selfadjoint operator $\A $, given by \eqref{defA},
where $\mu$ is a singular measure.
Notice first of all that $\LL $ can be expressed as follows:
\beqn
\label{repr-L}
\LL y=(\A +a (b^*)_{a,\deab})y \defin \A y+ ((b^*)_{a,\deab} y) a, \qquad y\in \cDL,
\neqn
where the linear functional
$(b^*)_{a,\deab}: \cDL\to \BC$ is defined by
\beqn
\label{b-L}
(b^*)_{a,\deab}y \defin -c
\neqn
whenever $y=y_0+c\cdot \A^{-1}a$ is a decomposition of a vector $y\in \cDL$
as in \eqref{2}. The summands in the right hand part of
\eqref{repr-L} are elements of $x\Elltwo(\mu)$ and in general
do not belong to $\Elltwo(\mu)$.

If $b\in \Elltwo(\mu)$, then
$(b^*)_{a,\deab}$ is a bounded functional, given by
$$
(b^*)_{a,\deab}y=\frac 1{\deab-\langle \A^{-1}a, b\rangle}\,
\langle y, b\rangle, \qquad y\in \Elltwo(\mu).
$$
If, moreover, $a, b\in \Elltwo(\mu)$, then
$\cDL=\cDA$, and $\LL (\A , a,b,\deab)$ is just a
bounded rank one perturbation of $\A $. In particular,
$$
\LL (\A , a,b,\deab)=\A +a   b^*, \qquad
\text{if} \enspace
\deab=1+\langle \A^{-1}a,b \rangle.
$$
Other values of $\deab$ do not enlarge the scope of perturbations considered.
Therefore in the case when $a, b\in \Elltwo(\mu)$, we may
consider the usual rank one perturbation
$$
\LL (\A , a,b)\defin \A +a   b^*.
$$
%
%
However, most of the time we do not need to distinguish
this case and all the results remain valid in this situation.


\subsection{Model spaces and Clark measures}
\label{clark}

We recall that the functions $\nuu$, $\rho$, $\Theta$ and $\phi$
have been defined above by formulas \eqref{singul2}--\eqref{defphi}.
Since $\mu(\{0\})=0$, $zb^*(\A -z)^{-1}\A^{-1}a\ne \const$, and therefore $\nuu \not\equiv 0$ in
$\BC\sm\BR$.

Here we will discuss the model space $K_\Theta$, the so-called Clark measures,
related to it, and Clark orthogonal bases of reproducing kernels in $K_\Theta$.
If we identify the functions in $H^2$ with their boundary values
on $\RR$, then an equivalent definition of $K_\Theta$ is
$K_\Theta = H^2 \cap \Theta \overline{H^2}$. Thus, we have a criterion
for the inclusion $f\in K_\Theta$ which we will repeatedly use:
\beqn
\label{crit}
f\in K_\Theta \ \Longleftrightarrow \ f\in H^2 \ \ \text{and} \ \
\Theta \overline f\in H^2.
\neqn
In other words, if $f \in K_\Theta$, then the function
$\tilde f(x) = \Theta(x) \overline{f(x)}$, $x\in\BR$,
is also in $K_\Theta$ (in the sense that it coincides with nontangential
boundary values of a function from $K_\Theta$).
If $\Theta$ is meromorphic, then  any
$f\in K_\Theta$ is also meromorphic, and the formula
$\tilde f(z) = \Theta(z) \overline{f(\overline z)}$
holds for all $z\in\BC^+$.

The following statement is an immediate corollary of (\ref{crit}).

\begin{lemma}
\label{lem1}
Let $\Theta$ be inner and let $\frac{\phi}{z+i} \in H^2$.
Assume that $\phi$ satisfies either \eqref{main} or $\frac{\phi(z)-\phi(i)}{z-i}\in K_\Theta$.
Then

\begin{enumerate}

\item If $\phi \in H^2$, then $\phi \in K_\Theta$.

\item Let $\lambda\in \BC^+ \cup \RR$ and let
$\phi$ be analytic in a neighborhood of $\lambda$.
Then the function $g_\lambda(z) = \frac{\phi(z) - \phi(\lambda)}{z-\lambda}$
belongs to $K_\Theta$.
\end{enumerate}
\end{lemma}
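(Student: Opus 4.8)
The plan is to apply criterion \eqref{crit} in both parts: to prove that a function lies in $K_\Theta$ it suffices to check that it lies in $H^2$ and that its $\Theta$-conjugate on $\RR$ again lies in $H^2$. For Part (1), suppose $\phi\in H^2$. Then the alternative \eqref{main0} is excluded, since it requires $\phi\notin H^2$, so $\phi$ must satisfy \eqref{main}, i.e. $\Theta=\phi/\bar\phi$ a.e. on $\RR$. Multiplying by $\bar\phi$ gives $\Theta\bar\phi=\phi$ a.e. on $\RR$; as $\phi\in H^2$, the boundary function $\Theta\bar\phi$ coincides a.e. with the $H^2$ function $\phi$, so $\Theta\bar\phi\in H^2$. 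By \eqref{crit}, $\phi\in K_\Theta$.

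For Part (2) I would first check $g_\lambda\in H^2$, using only that $h=\phi/(z+i)\in H^2$. The function $g_\lambda$ is analytic in $\BC^{+}$: for $\lambda\in\BC^{+}$ the apparent pole at $z=\lambda$ is removable because $\phi(z)-\phi(\lambda)$ vanishes there, and for $\lambda\in\RR$ the analyticity of $\phi$ near $\lambda$ gives the same conclusion. Writing $\phi=(z+i)h$, one has on $\RR$ the bound $|g_\lambda(x)|\le |x+i|\,|h(x)|/|x-\lambda| + |\phi(\lambda)|/|x-\lambda|$. Away from $\lambda$ the factor $|x+i|/|x-\lambda|$ is bounded, so the first term is dominated by a multiple of $|h|\in L^2$ and the second term is in $L^2$; near a real $\lambda$ one instead uses that $g_\lambda$ is bounded there by analyticity. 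Hence $g_\lambda\in L^2(\RR)$, and since $g_\lambda$ is a ratio of functions of bounded type lying in the Smirnov class, Smirnov's theorem gives $g_\lambda\in H^2$.

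It remains to exhibit an $H^2$ function whose boundary values equal $\Theta\overline{g_\lambda}$ on $\RR$. Conjugating on $\RR$ gives $\overline{g_\lambda(x)}=\big(\overline{\phi(x)}-\overline{\phi(\lambda)}\big)\big/\big(x-\bar\lambda\big)$. Under \eqref{main} I substitute $\Theta\bar\phi=\phi$ to obtain $\Theta(x)\overline{g_\lambda(x)}=\frac{\phi(x)-\overline{\phi(\lambda)}\,\Theta(x)}{x-\bar\lambda}$, which is the boundary trace of $G(z)=\big(\phi(z)-\overline{\phi(\lambda)}\Theta(z)\big)\big/\big(z-\bar\lambda\big)$, analytic in $\BC^{+}$ since $\bar\lambda\notin\BC^{+}$. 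Under \eqref{main0} I would instead use that $g_i\in K_\Theta$, so $\Theta\overline{g_i}\in H^2$, and perform the same algebra after expressing $\phi$ through $g_i$ via $\phi=\phi(i)+(z-i)g_i$; this rewrites $\Theta\overline{g_\lambda}$ as the boundary trace of an explicit function built from $\Theta\overline{g_i}$ and $\Theta$. In either case, for $\lambda\in\BC^{+}$ the candidate splits into two $H^2$ summands (e.g. $\frac{\phi}{z-\bar\lambda}=h\cdot\frac{z+i}{z-\bar\lambda}$ with the second factor in $H^\infty$, and $\frac{\Theta}{z-\bar\lambda}\in H^2$), so it lies in $H^2$, and \eqref{crit} yields $g_\lambda\in K_\Theta$.

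The main obstacle is the real case $\lambda\in\RR$, where the denominator $z-\bar\lambda=z-\lambda$ has its zero on the boundary and the clean splitting into $H^2$ summands is no longer available. Here I would argue instead that $|\Theta\overline{g_\lambda}|=|g_\lambda|$ a.e., since $|\Theta|=1$, so the boundary values are automatically square-integrable once $g_\lambda\in H^2$; the explicit candidate $G$ provides the analytic continuation into $\BC^{+}$, and since $z-\lambda$ is outer for real $\lambda$, division keeps $G$ in the Smirnov class, so Smirnov's theorem upgrades its $L^2$ boundary values to $H^2$ membership. Checking that the boundary singularity at $\lambda$ is genuinely harmless — combining the analyticity of $\phi$ near $\lambda$ with the unimodularity of $\Theta$ — is the delicate technical point of the argument.
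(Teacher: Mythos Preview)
Your proof is correct and follows essentially the same route as the paper, using the criterion \eqref{crit} together with the identity $\Theta(x)\overline{g_\lambda(x)}(x-\bar\lambda)=\Theta(x)\overline{\phi(x)}-\overline{\phi(\lambda)}\Theta(x)$ and a Smirnov-type argument for the boundary case $\lambda\in\RR$. The paper organizes the two hypotheses more uniformly by first establishing the single auxiliary fact $\frac{\Theta\bar\phi}{x+i}\in H^2$ (from \eqref{main0} via $\Theta\overline{g_i}\in H^2$, or from \eqref{main} directly), which then feeds into both parts at once; your case split reaches the same conclusions with the same ingredients, and your treatment of the real-$\lambda$ case makes explicit the Smirnov step that the paper leaves implicit.
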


We remark that if $\Theta$ be inner and $\frac{\phi}{z+i} \in H^2$, then
\eqref{main} implies that $\frac{\phi(z)-\phi(i)}{z-i}\in K_\Theta$.

\begin{proof}[Proof of Lemma \ref{lem1}]
(1) We have $ \Theta(x) \overline{\phi(x)} = (x+i) \Theta(x)
\overline{\psi(x)} +\overline{\phi(i)} \Theta(x)$
for some $\psi\in K_\Theta$, which gives that $\frac{\Theta\overline\phi}{x+i}
\in H^2$. If, moreover, $\phi \in H^2$, then $\phi\in K_\Theta$
by (\ref{crit}), which proves (1).
%

(2) Obviously, $g_\lambda \in H^2$. Also,
$\Theta (x) \overline{g_\lambda(x)}(x-\overline \lambda)
= \Theta(x) \overline{\phi(x)} -\overline{\phi(\lambda)} \Theta(x)$,
whence $\Theta \overline{g_\lambda} \in H^2$.
\end{proof}

For $\lambda\in \BC^+$ set
$$
k_\lambda(z) = \frac{1-\overline{\Theta(\lambda)}\Theta(z)}
{z-\bar \lambda}, \qquad
\tilde k_\lambda(z) = \frac{\Theta(z) - \Theta(\lambda)}{z-\lambda}.
$$
Note that $\tilde k_\lambda(z) = \Theta(z) \overline{k_\lambda(\overline z)}$.
The definitions of $k_\lambda$ and $\tilde k_\lambda$ can be extended to the
points $\lambda\in \RR$ such that $\Theta$ has an analytic
extension to a neighborhood of $\lambda$, and
$\tilde k_\lambda = - \Theta(\lambda) k_\lambda$ for
these values of $\lambda\in \RR$.

Note that $k_\lambda$ is the {\it reproducing kernel of} $K_\Theta$
corresponding to the point $\lambda$, i.e.,
\beqn
\label{repr-fla}
\langle f, k_\lambda \rangle =2\pi i f(\lambda), \qquad f\in K_\Theta.
\neqn
Analogously,
$\langle f, \tilde k_\lambda \rangle = - 2\pi i \,\overline{\tilde f(\lambda)}$.

Orthogonal bases of reproducing kernels were studied by
L. de Branges~\cite{br} for meromorphic inner functions
and by D.N. Clark~\cite{cl} in the general case.
They may be constructed as follows.
For any $\zeta\in{\BC}$, $|\zeta|=1$, the function
$(\zeta+\Theta)/(\zeta-\Theta)$ has positive real part in the upper half-plane.
Then, by the Herglotz theorem,
there exist $p_\zeta\ge 0$, $q_\zeta\in {\RR}$
and a measure $\sigma_\zeta$ with $\int_\RR (1+t^2)^{-1} d\sigma_\zeta(t)<\infty$
such that
\beqn
\label{clark-meas}
\frac{\zeta+\Theta(z)}{\zeta-\Theta(z)}= -i p_\zeta z +iq_\zeta
+\frac{1}{i}
\int_{\RR} \Big(\frac{1}{t-z} -\frac{t}{t^2+1} \Big) \,
d \sigma_\zeta(t),  \qquad z\in\mathbb{C^+}.
\neqn

We will say that $\sigma_\zeta$ is a \textit{Clark measure of $\Theta$}.
It follows from the results of Clark~\cite{cl}
(for the setting of the unit disc, instead of the upper half-plane)
that in the case when $p_\zeta=0$ the mapping
\beqn
\label{clark-meas1}
(U_\zeta f)
(z)
 = \frac 1 {2\,\sqrt{\pi}} \big(\zeta-\Theta(z)\big) \int_{\RR}\frac{f(t)}{t-z}\,
d\sigma_\zeta(t)
\neqn
is a unitary operator from $\Elltwo(d\sigma_\zeta)$ onto $K_\Theta$ (see
Proposition~\ref{splittings} below).
The inverse operator to $U_\zeta$ has the sense just of
the embedding $K_\Theta \subset \Elltwo(\pi \sigma_\zeta)$;
clearly, it also is unitary. As Poltoratskii has shown in~\cite{polt},
any function $f \in K_\Theta$ has finite nontangential boundary
values $\sigma_\zeta$-a.e.

In particular, if $\sigma_\zeta$ is purely atomic
(that is, $\sigma_\zeta=\sum_n c_n \delta_{t_n}$,
where $\delta_x$ denotes the Dirac measure at the point $x$)
and $p_\zeta=0$, then
$k_{t_n}\in{K^2_{\Theta}}$ and the system
$\{k_{t_n}\}$ of reproducing kernels
is an orthogonal basis in ${K_{\Theta}}$.
Of course, if $\Theta$ is meromorphic, then any Clark measure
$\sigma_\zeta$ is atomic.

If $p_\zeta>0$ in the representation (\ref{clark-meas}),
then the orthogonal complement to the span of $\{k_{t_n}\}$ is one-dimensional
and is generated by the function $\zeta-\Theta$, which in this case
belongs to $K_\Theta$ (this is half-plane version
of a result due to Ahern and Clark~\cite{ahe-cl70}).
Thus,
\beqn
\label{mass}
\zeta-\Theta \in H^2 \
\Longleftrightarrow \ \zeta - \Theta(iy) = O(y^{-1}), \ y\to \infty \
\Longleftrightarrow  \ p_\zeta>0.
\neqn

We will use the notation
\beqn
\label{defnu}
\nu = |b|^2\mu.
\neqn
Returning to our model of singular perturbations, observe that
the representations (\ref{singul1})--(\ref{the}) mean that
$\nu$ is the Clark measure $\sigma_{-1}$ for
$\Theta$. By the results of~\cite{polt},
\beqn
\label{phi-a-b}
\phi = ia/b \qquad \nu\text{-a.e.}
\neqn

\begin{proposition}
\label{param}
Let $a,b$ be functions that satisfy \eqref{cond-a-b} and let $\deab\in \BR$.
Let $\Theta$ and $\phi$ be defined by \eqref{the} and \eqref{defphi}.
Then we have:
\begin{enumerate}

\item $1+\Theta\notin H^2$ and $\Theta(0)\ne -1$\textup;

\item $\frac{\phi(z) - \phi(i)}{z-i} \in K_\Theta$\textup;

\item If $a\notin \Elltwo(\mu)$, then $\phi \notin H^2$\textup;

\item If $a\in \Elltwo(\mu)$, then $\phi \in H^2$ if and only if
$\deab=\int x^{-1}a(x)\overline{b(x)} d\mu(x)$\textup;

\item $\deab =0$ if and only if $\phi(0) =0$.
\end{enumerate}
\end{proposition}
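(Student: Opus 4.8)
The plan is to reduce everything to two elementary identities and a single application of Clark's unitary map. First I would record that, by \eqref{the}, $1+\Theta=\dfrac{2i}{\,i+\rho\,}$, whence the generating function is $\phi=\dfrac{i\nuu}{\,i+\rho\,}$. Since $0\notin\supp\mu$, both $\rho$ and $\nuu$ are analytic near $0$ with $\rho(0)=\de$ and $\nuu(0)=\deab$; hence $\Theta(0)=\dfrac{i-\de}{\,i+\de\,}$ and $\phi(0)=\dfrac{i\deab}{\,i+\de\,}$. As $\de\in\RR$, the equality $\Theta(0)=-1$ would force $2i=0$, so $\Theta(0)\ne-1$, and $\phi(0)=0$ holds exactly when $\deab=0$; this already gives (5) and half of (1). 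For the remaining claim $1+\Theta\notin H^2$ I would invoke the dichotomy \eqref{mass} with $\zeta=-1$: it suffices to show $p_{-1}=0$. Since $\dfrac{-1+\Theta}{-1-\Theta}=-i\rho$ (this is the identification of $\nu=|b|^2\mu$ with the Clark measure $\sigma_{-1}$), the vanishing $p_{-1}=0$ amounts to $\rho(iy)/y\to0$ as $y\to\infty$, which follows by dominated convergence, the integrand being dominated by $|b|^2/t^2\in L^1(\mu)$, a consequence of \eqref{cond-a-b}. Thus $1+\Theta\notin H^2$.

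The heart of the proof is (2), which I would establish \emph{unconditionally} by a single algebraic manipulation. A direct computation gives $\dfrac{\nuu(z)-\nuu(i)}{z-i}=\displaystyle\int\frac{a(t)\overline{b(t)}}{(t-z)(t-i)}\,d\mu(t)=\int\frac{h(t)}{t-z}\,d\nu(t)$, where $h=\dfrac{a/b}{\,t-i\,}$ and $\nu=|b|^2\mu$. Crucially, $h\in L^2(\nu)$ \emph{always} holds, since $\int|h|^2\,d\nu=\int\dfrac{|a|^2}{t^2+1}\,d\mu<\infty$ by the standing assumption $a\in(1+|x|)^{-1}L^2(\mu)$; this is exactly what frees (2) from any hypothesis on $a$. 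Because $\dfrac{1+\Theta}{2}=\dfrac{i}{\,i+\rho\,}$, the Clark formula \eqref{clark-meas1} with $\zeta=-1$ shows that $\dfrac{1+\Theta}{2}\int\dfrac{h}{t-z}\,d\nu$ is, up to a constant, $U_{-1}h\in K_\Theta$. Writing $\dfrac{\phi-\phi(i)}{z-i}=\dfrac{1+\Theta}{2}\cdot\dfrac{\nuu-\nuu(i)}{z-i}+\dfrac{\nuu(i)}{2}\cdot\dfrac{\Theta-\Theta(i)}{z-i}$ and recognising the second summand as $\tfrac{\nuu(i)}{2}\,\tilde k_i\in K_\Theta$, I conclude that $\dfrac{\phi-\phi(i)}{z-i}\in K_\Theta$, which is (2).

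Finally I would treat (3) and (4) together. From (2) one obtains, on $\RR$, $\dfrac{\Theta\overline\phi}{x+i}=\Theta\overline{g_i}+\dfrac{\overline{\phi(i)}\,\Theta}{x+i}\in H^2$, where $g_i=\dfrac{\phi-\phi(i)}{z-i}\in K_\Theta$; this is the computation inside the proof of Lemma~\ref{lem1}, and it uses only (2), not the clause $\phi\notin H^2$ of \eqref{main0}, so there is no circularity. Suppose now $\phi\in H^2$. Then the function $(z+i)G(z)$, with $G=\Theta\overline\phi/(x+i)\in H^2$, lies in the Smirnov class and has $L^2(\RR)$ boundary values equal to $\Theta\overline\phi$; by Smirnov's theorem $\Theta\overline\phi\in H^2$, hence $\phi\in K_\Theta$ by \eqref{crit}. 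Using the embedding $K_\Theta\subset L^2(\pi\nu)$ and $\phi=ia/b$ $\nu$-a.e. from \eqref{phi-a-b} gives $\int|a|^2\,d\mu=\int|\phi|^2\,d\nu<\infty$, i.e.\ $a\in L^2(\mu)$; contraposition proves (3). When $a\in L^2(\mu)$ the number $\omega=\int x^{-1}a\overline b\,d\mu$ is finite, and splitting $\nuu=(\deab-\omega)+\int\frac{a\overline b}{t-z}\,d\mu$ yields, exactly as in the previous paragraph, $\phi=(\deab-\omega)\dfrac{1+\Theta}{2}-\dfrac{1}{2\sqrt{\pi}}\,U_{-1}f$ with $f=a/b\in L^2(\nu)$. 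Since $U_{-1}f\in H^2$ while $\dfrac{1+\Theta}{2}\notin H^2$ by (1), we get $\phi\in H^2\iff\deab=\omega$, which is (4).

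The main obstacle is the reduction underlying (2): one must see that the difference quotient of $\nuu$ is the $\nu$-Cauchy transform of a function that is genuinely square-integrable against $\nu$ regardless of whether $a\in L^2(\mu)$, and then route it through the Clark unitary $U_{-1}$. This single identity both makes (2) hold in full generality and supplies the representation of $\phi$ that drives (3) and (4). A secondary technical point, arising only in the forward direction of (3)--(4), is the Smirnov-class argument upgrading $\dfrac{\Theta\overline\phi}{x+i}\in H^2$ together with $\phi\in H^2$ to $\phi\in K_\Theta$.
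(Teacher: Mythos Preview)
Your proof is correct and follows essentially the same approach as the paper's: the same algebraic splitting of $\dfrac{\phi-\phi(i)}{z-i}$ into $\dfrac{1+\Theta}{2}\cdot\dfrac{\nuu-\nuu(i)}{z-i}+\dfrac{\nuu(i)}{2}\tilde k_i$, the same identification of the first summand with $U_{-1}h$ for $h=\dfrac{a/b}{t-i}\in L^2(\nu)$, and the same decomposition of $\nuu$ in (4). The only substantive difference is that you make explicit the Smirnov-class step ``$\phi\in H^2$ and $\dfrac{\Theta\overline\phi}{x+i}\in H^2$ imply $\Theta\overline\phi\in H^2$'' underlying the implication $\phi\in H^2\Rightarrow\phi\in K_\Theta$, whereas the paper defers this to Lemma~\ref{lem1}(1) and the criterion~\eqref{crit}; your care in avoiding circularity there is warranted and correct.
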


\begin{proof}[Proof]
(1) It follows from \eqref{singul1} that $|\rho(iy)|=o(y)$ as $y\to \infty$.
Since $\frac {1-\Theta}{1+\Theta}=- i \rho$, it follows from
\eqref{mass} that $1 + \Theta \notin H^2$.

(2) It follows from the formula (\ref{defphi}) for $\phi$ that
$$
\frac{\phi(z) -\phi(i)}{z-i}
 = \frac{1+\Theta(z)}{2}\int
\frac{a(x) \overline{b(x)}}{(x-z)(x-i)} \, d\mu(t)
+ \beta(i) \,\frac {\Theta(z)-\Theta(i)}{2\,(z-i)}.
$$
Since $\frac{a}{x-i}\in  \Elltwo(\mu)$, we have
$\frac{a}{(x-i) b}\in  \Elltwo(\nu)$, where $\nu = |b|^2 \mu$.
Since $\nu$ is the Clark measure $\sigma_{-1}$ for
$K_\Theta$, and the Clark operator $U_{-1}$ maps $\Elltwo(\nu)$
onto $K_\Theta$, we have
$\frac{\phi(z) - \phi(i)}{z-i} \in K_\Theta$.

(3) If $\phi\in H^2$, then $\phi\in K_\Theta$.
Hence, $\phi \in \Elltwo(\nu)$, and, since
$\phi = ia/b$ $\nu$-a.e., we have
$\int |a(x)|^2 \,d \mu(x) = \int |\phi(x)|^2 \,d\nu(x) <\infty$.

(4) Now let $a\in \Elltwo(\mu)$. Then we have
$$
 2 \phi(z) = (1+\Theta(z))
 \Big(\deab - \int \frac{a(x) \overline{b(x)}}{x}
 d \mu(x) \Big)
+ (1+\Theta(z)) \int \frac{a(x) \overline{b(x)}}{x-z} d\mu(x).
$$
Since $\int |a|^2 |b|^{-2} d\nu = \int|a|^2 d\mu<\infty$,
the boundedness of the operator $U_{-1}$ implies
that the last term is in $K_\Theta$.
Hence, $\phi \in K_\Theta$ if and only if
$(1+\Theta(z))\big(\deab -\int x^{-1} a(x) \overline{b(x)} d \mu(x) \big)$ is in $K_\Theta$.
Since $1+\Theta \notin\ K_\Theta$, we conclude that $\phi \in K_\Theta$
if and only if the coefficient is zero.

(5) Obviosly, $\deab =0$ if and only if $\beta(0) = 0$.  By (1), we have
$1+\Theta(0) \ne 0$, whence the statement follows.
\end{proof}

\subsection{Proof of Theorem~\ref{rank-one-model} on the model in $K_\Theta$}
\label{pr32}

We define ``diagonalizing'' transforms
\begin{align}
V_0 u(z)=\frac 1 {2\;\root\of\pi}\; b^*(\A -z)^{-1}u, \qquad
z\in \rho(\A ), \,
u\in \Elltwo(\mu), \\
V_\LL  u(z)=\frac 1 {\root\of\pi}\; (b^*)_{a,\deab}(\LL -z)^{-1}u, \qquad
z\in \rho(\LL ), \,
u\in \Elltwo(\mu).
\end{align}

First we need the following proposition.

\begin{proposition}
\label{splittings}
Let $\Theta$ and $\phi$ be defined by (\ref{the}) and (\ref{defphi}). Then

\begin{enumerate}

\item $V_0$ is an isometric isomorphism of $\Elltwo(\mu)$ onto
$\frac 1 {1+\Theta}K_\Theta$\textup;

\item $V_\LL $ is an isometric isomorphism of $\Elltwo(\mu)$ onto
$\frac 1 \phi K_\Theta$.

\end{enumerate}
\end{proposition}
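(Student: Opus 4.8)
The plan is to reduce both parts to the Clark representation \eqref{clark-meas1} of $K_\Theta$, exploiting the fact recorded above that $\nu=|b|^2\mu$ is exactly the Clark measure $\sigma_{-1}$ of $\Theta$.

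For part (1) I would first make the transform explicit. Since $(\A-z)^{-1}u(x)=u(x)/(x-z)$ and $b^*x=\langle x,b\rangle$,
\[
V_0u(z)=2\sqrt{\pi}\,b^*(\A-z)^{-1}u=2\sqrt{\pi}\int\frac{u(x)\overline{b(x)}}{x-z}\,d\mu(x).
\]
By the cyclicity assumption, $b\ne0$ $\mu$-a.e., so $u\mapsto f\defin u/b$ is a unitary map of $L^2(\mu)$ onto $L^2(\nu)$, and $u\overline{b}\,d\mu=f\,d\nu$. Hence $V_0u(z)=2\sqrt{\pi}\int f(x)(x-z)^{-1}\,d\nu(x)$, which by \eqref{clark-meas1} with $\zeta=-1$ is a fixed constant multiple of $(1+\Theta(z))^{-1}(U_{-1}f)(z)$. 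Proposition~\ref{param}(1) gives $1+\Theta\notin H^2$, so \eqref{mass} yields $p_{-1}=0$ and $U_{-1}\colon L^2(\nu)\to K_\Theta$ is unitary. Thus $V_0$ is the composition of the unitaries $u\mapsto f$ and $U_{-1}$ with the multiplication $h\mapsto(1+\Theta)^{-1}h$ carrying $K_\Theta$ onto $\frac{1}{1+\Theta}K_\Theta$; once the latter space is given the norm transported from $K_\Theta$ via multiplication by $1+\Theta$, this is the asserted isometric isomorphism.

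For part (2) the point is a resolvent identity relating $V_\LL$ to $V_0$. Fix $z\in\rho(\LL)$, set $y=(\LL-z)^{-1}u$, and use the decomposition $y=y_0+c\,\A^{-1}a$ from \eqref{2}, for which $(b^*)_{a,\deab}y=-c$ and $\LL y=\A y_0$. From $(\LL-z)y=u$ I get $(\A-z)y_0=u+zc\,\A^{-1}a$, hence $y_0=(\A-z)^{-1}u+zc\,(\A-z)^{-1}\A^{-1}a$. Imposing the side condition $\deab c+\langle y_0,b\rangle=0$ and invoking the definition \eqref{singul2} of $\nuu$, i.e. $zb^*(\A-z)^{-1}\A^{-1}a=\nuu(z)-\deab$, the $\deab$-terms cancel and I am left with $c\,\nuu(z)+\tfrac{1}{2\sqrt{\pi}}V_0u(z)=0$. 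Therefore
\[
V_\LL u(z)=-2\sqrt{\pi}\,c=\frac{V_0u(z)}{\nuu(z)}.
\]
Since $\phi=\frac{\nuu}{2}(1+\Theta)$ by \eqref{defphi}, i.e. $\nuu(1+\Theta)=2\phi$, dividing the range $\frac{1}{1+\Theta}K_\Theta$ of $V_0$ by $\nuu$ produces exactly $\frac{1}{\nuu(1+\Theta)}K_\Theta=\frac{1}{\phi}K_\Theta$. Hence $V_\LL$ is a bijection of $L^2(\mu)$ onto $\frac{1}{\phi}K_\Theta$, and its isometry is inherited from part (1) once $\frac{1}{\phi}K_\Theta$ carries the norm transported from $K_\Theta$ by multiplication by $\phi$.

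The main obstacle is the resolvent computation in part (2). Since $a$ and $\A^{-1}a$ need not lie in $L^2(\mu)$, the manipulations must be read inside the scale $\cDA\subseteq H\subseteq\A H$, and the identity $(b^*)_{a,\deab}y=-c$ together with the correctness of the decomposition rests on the admissibility condition $(\admone)$. Crucially, to divide by $\nuu(z)$ and thereby recover $(\LL-z)^{-1}$ one must know that $z\in\rho(\LL)$ forces $\nuu(z)\ne0$; this is the solvability content of the eigenvalue equation for $\LL$, whose spectrum is described by the zeros of the generating function $\phi$. Everything else---chasing the constants $2$ and $\sqrt{\pi}$ so that the composite maps are genuinely isometric, and fixing the transported Hilbert-space structures on $\frac{1}{1+\Theta}K_\Theta$ and $\frac{1}{\phi}K_\Theta$---is routine once the structural identities $V_0=\const\cdot(1+\Theta)^{-1}U_{-1}(\,\cdot/b)$ and $V_\LL=V_0/\nuu$ are established.
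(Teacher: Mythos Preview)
Your proposal is correct and follows essentially the same route as the paper: both reduce part~(1) to Clark's unitarity (with $\nu=|b|^2\mu=\sigma_{-1}$), and both derive part~(2) from the same splitting identity $V_\LL u=\nuu^{-1}V_0u$ via the decomposition $y=y_0+c\A^{-1}a$ of $y=(\LL-z)^{-1}u$. The only cosmetic difference is that the paper verifies the isometry in~(1) by computing $\langle\eta_\xi,\eta_\tau\rangle_{L^2(\mu)}$ and $(1+\Theta)V_0\eta_\xi$ on the complete family $\eta_\xi(x)=(\rho(\xi)+i)^{-1}b(x)/(x-\xi)$, whereas you factor $V_0$ directly as (constant)$\cdot(1+\Theta)^{-1}\,U_{-1}(\,\cdot\,/b)$ and appeal to the known unitarity of $U_{-1}$; these are the same argument in different clothing.
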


\begin{proof}[Proof]
First let us deduce the splitting formula
\beqn
\label{rel-L0}
V_\LL  u(z)=\nuu(z)^{-1} V_0  u(z), \qquad u\in \Elltwo(\mu).
\neqn
To do that, choose any $u\in \Elltwo(\mu)$ and put $y\defin (\LL-z)^{-1}u\in \cDL$.
A direct calculation shows that the representation $y=y_0+c\A^{-1}a$ as in
\eqref{2} is given by
$$
y_0=(\A -z)^{-1}u + cz(\A -z)^{-1}\A^{-1}a, \qquad
c=
-\nuu(z)^{-1}b^*(\A-z)^{-1} u.
$$
By \eqref{b-L}, this implies \eqref{rel-L0}.

Statement (1) is very close to the results by Clark
~\cite{cl}. To prove it, one can apply the arguments
given in~\cite[Proposition~9.5.4]{CimaMathRoss}.
Namely, for $\xi\in \BR\sm\supp \mu$, put
$$
\eta_\xi(x)=\frac 1 {\rho(\xi)+i} \; \frac {b(x)}{x-\xi}\in \Elltwo(\mu).
$$
Let
$\xi, \tau\notin \supp \mu\cup \bar Z_\Theta$. Direct calculations give that
\begin{gather*}
(1+\Theta(z))V_0\eta_\xi(z)=-\frac {\Theta(z)-\Theta(\xi)}{2\;\root\of\pi(z-\xi)}
=\frac{\Theta(\xi)}{2\;\root\of\pi}\,k_{\xi}(z)\in K_\Theta;                   \\
\langle \eta_\xi, \eta_\tau \rangle_{\Elltwo(\mu)}
=
- \frac 1 {2i}\, \frac {1-\overline{\Theta(\tau)} \Theta(\xi)}{\xi- \tau}
=
\bigg\langle \frac {\Theta(\xi)}{2\;\root\of\pi}\, k_{\xi}, \;
\frac{\Theta(\tau)}{2\;\root\of\pi}\, k_{\tau} \bigg\rangle_{K_\Theta}.
\end{gather*}
Since $\{\eta_\xi\}$ are complete in $\Elltwo(\mu)$ and
$\{\Theta(\xi)k_{\xi}\}$ are complete in $K_\Theta$, the assertion (1) follows.

Statement (2) follows from statement (1) and formula \eqref{rel-L0}.
\end{proof}

\begin{proof}[Proof of Theorem~\ref{rank-one-model}]
Put $V_{\LL ,\phi} u\defin \phi\cdot V_\LL  u$, $u\in \Elltwo(\mu)$.
By Proposition~\ref{splittings}, $V_{\LL ,\phi}$ is an
isometric isomorphism from $\Elltwo(\mu)$ onto $K_\Theta$.
Define an operator $T=V_{\LL ,\phi}\LL  V_{\LL ,\phi}^{-1}$ on $K_\Theta$.
It is unitarily equivalent to $\LL $.  The splitting formula
$$
V_\LL (\LL -\xi)^{-1}u(z)=
\frac
{V_\LL  u(z)-V_\LL  u(\xi)}
{z-\xi}, \qquad \xi\notin \sigma(\LL ),
$$
is immediate. It easily implies the expressions for
$\cD(T)$ and for the action of $T$, given in the Theorem.

The properties of $\Theta$ and $\phi$ follow immediately from
Proposition~\ref{param}. If, moreover,
$\LL $ is a real type perturbation, then $\nuu$ is real a.e. on $\BR$,
and it follows that $\Theta$ and $\phi$ satisfy \eqref{main}.

We turn to the proof of the converse statement
and show that any pair $(\Theta, \phi)$ with the above properties
can be realized in our model. Let $\nu$ be the Clark
measure $\sigma_{-1}$ for $\Theta$.
Note that by the hypothesis $0 \notin {\rm supp}\, \nu$
and also that, by (\ref{mass}), $p_{-1} =0$.
Since $\frac{\phi(z) -\phi(i)}{z-i}$
belongs to the space $K_\Theta$, by
(\ref{clark-meas1}) there exists $u\in \Elltwo(\nu)$ such that
$$
\frac{\phi(z) -\phi(i)}{z-i} =
\big(1+\Theta(z)\big) \int\frac{u(t)}{t-z} \, d\nu(t).
$$
Choose any $b$ so that $|b|>0$ $\nu$-a.e. and put
$\mu = |b|^{-2}\nu$. Then $x^{-1}b \in \Elltwo(\mu)$. We have
$$
\phi(z) = \phi(i) + (1+\Theta(z)) \bigg[\int
\bigg(\frac{1}{x-z}-\frac{1}{x}\bigg) (x-i)u(x)|b(x)|^2 d\mu (x) -
\int\frac{u(x)}{x}d\nu(x) \bigg].
$$
Since, by definition of the Clark measure (\ref{clark-meas}),
$$
(1+\Theta(z))\bigg(r_0+ \frac{1}{\pi i}
\int \bigg(\frac{1}{x-z}-\frac{1}{x}\bigg) d\nu (x) \bigg) = 2
$$
for some constant $r_0$, we can
write $\phi(i)$ as an  analogous integral
and finally obtain
$$
\phi(z) = \frac{1+\Theta(z)}{2}
\bigg(\deab + \int \bigg( \frac{1}{x-z}-\frac{1}{x}\bigg)v(x)|b(x)|^2 d\mu(x)\bigg),
$$
for some constant $\deab$ and $v$ such that $x^{-1}v \in \Elltwo(\nu) = \Elltwo(|b|^2\mu)$.
Put $a(x) = v(x)/b(x)$. Then $x^{-1} a \in \Elltwo(\mu)$
and
$$
\phi(z) = \frac{1+\Theta(z)}{2}
\bigg(\deab + \int \bigg( \frac{1}{x-z}-\frac{1}{x}\bigg)
a(x)\overline b(x) d\mu(x)\bigg).
$$
Since $\phi \notin K_\Theta$, by Proposition~\ref{param}, either $a\notin \Elltwo(\mu)$
or $\deab \ne \int x^{-1}a(x) \overline b(x) d\mu(x)$ and so $({\rm A })$
is satisfied. We conclude that $\Theta$ and $\phi$ correspond
to the singular perturbation associated with the measure $\mu$ and
the data $a$, $b$ and $\deab$.

Finally, note that if $\Theta$ and $\phi$ satisfy (\ref{main}), then
$\nuu$ is real on ${\RR}$ whence $a\overline b \in {\RR}$
and $\deab \in {\RR}$. Thus the constructed perturbation is of real type.
\end{proof}

\begin{remark}
{\rm It follows from this theorem that if two operators
$\LL $, $\LL _1$ as above are real type perturbations and $\phi=\phi_1$, then
$\LL $ and $\LL _1$ are unitarily equivalent. }
\end{remark}


\subsection{The spectrum and eigenfunctions of $T$ and of $T^*$}
\label{tse}
Throughout this section we will assume that
$\Theta$ and $\phi$ are meromorphic. Then it follows from Lemma~\ref{lem1}
that for any $\lambda \in \BC^+\cup\RR$ such that $\phi(\lambda) = 0$,
the function
$$
h_\lambda(z) = \frac{\phi(z)}{z-\lambda}
$$
belongs to $K_\Theta$. Denote by $Z_\phi$ the set of zeros of $\phi$
in $\BC^+\cup\RR$. Recall that if we put $\tilde \phi(z) = \Theta(z)
\overline{\phi(\overline z)}$, then $\tilde \phi$ is analytic in $\BC^+$.
Denote by $Z_{\tilde \phi}$ the zero set of $\tilde \phi$ in $\BC^+ \cup\RR$
(note that the zeros of $\phi$ and $\tilde \phi$ on $\RR$ coincide).

The following lemma describes the spectrum
of the model operator $T$ and of its adjoint.
For a set $Z\subset \BC$ we denote by $\overline Z$ the set $\{\overline z: \, z\in Z\}$.

\begin{lemma}
\label{lem_eigs}

Let meromorphic $\Theta$ and $\phi$ correspond to a
singular rank one perturbation of a cyclic selfadjoint operator
$\A $ with the compact resolvent. Then the following holds:
\begin{enumerate}

\item Operators $\LL $ and $T$ have compact resolvents\textup;

\item $\si(T)=\si_p(T)=Z_\phi \cup \overline Z_{\tilde \phi}$\textup;

\item The eigenspace of $T$ corresponding to an eigenvalue
$\la$, $\la\in Z_\phi \cup \overline Z_{\tilde \phi}$, is spanned by
$h_\la$\textup;

\item Suppose that either $\Im\la \ge 0$ and $\la$ is a zero of
$\phi$ (exactly) of order $k$, or
$\Im\la < 0$ and $\bar \la$ is a zero of $\tilde \phi$ of order $k$. Then
$\dim\ker (T-\la)^\ell=\ell$ for $\ell \le k$, and
$\ker (T-\la)^s=\ker (T-\la)^k$ for $s>k$. Moreover,
for $1\le \ell \le k$, the space $\ker (T-\la)^\ell$ is spanned by
the eigenvectors and root vectors $\phi(z)/(z-\la), \phi(z)/(z-\la)^2, \dots,
\phi(z)/(z-\la)^\ell$ of $T$.

\item Suppose $T^*$ is correctly defined. Then
$\si(T^*)=\overline{Z}_\phi \cup Z_{\tilde \phi}$ and
$\ker(T^*-\overline \la I)$ is spanned by $k_{\la}$ for $\lambda\in Z_\phi$,
while $\ker(T^*- \la I)$ is spanned by $\tilde k_{\la}$ for
$\lambda\in Z_{\tilde \phi}$.

\end{enumerate}
\end{lemma}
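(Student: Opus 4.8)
The plan is to establish the five assertions by exploiting the explicit description of $K_\Theta$ via the criterion \eqref{crit} together with the eigenfunction formulas already set up in the preceding discussion. Throughout I work under the standing assumption that $\Theta$ and $\phi$ are meromorphic, so that every element of $K_\Theta$ is meromorphic in $\BC$ and the formula $\tilde f(z)=\Theta(z)\overline{f(\bar z)}$ holds. First I would dispose of item (1): since $\A$ has compact resolvent and $\LL$ is unitarily equivalent to $T$ by Theorem~\ref{rank-one-model}, it suffices to observe that $\rho(\LL)\ne\emptyset$ (guaranteed because $\nuu\not\equiv0$, so $\phi$ has points where it is nonzero) and that $\LL$ differs from $\A$ by a singular rank one perturbation; by Proposition~\ref{L inverse} the resolvent of $\LL$ is a rank one perturbation of the compact operator $\A^{-1}$, hence compact. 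This forces the spectrum to be discrete and to consist entirely of eigenvalues, which already reduces (2) to computing $\si_p(T)$.

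For item (3), and hence the eigenvalue half of (2), I would directly solve $Tf=\la f$. By definition of $T$ on $\cD(T)$, the equation $zf-c\phi=\la f$ gives $f=c\,\phi/(z-\la)$, so a nonzero eigenvector forces $\phi/(z-\la)\in K_\Theta$. Using \eqref{crit} and Lemma~\ref{lem1}, the inclusion $\phi/(z-\la)\in H^2$ near $\la$ requires $\phi(\la)=0$ when $\Im\la\ge0$, while the condition $\Theta\,\overline{\phi/(z-\la)}\in H^2$, rewritten as $\tilde\phi/(z-\bar\la)\in H^2$ up to the meromorphic identity, handles $\Im\la<0$ by forcing $\tilde\phi(\bar\la)=0$. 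This yields $\si_p(T)=Z_\phi\cup\overline{Z}_{\tilde\phi}$ and shows the eigenspace is one-dimensional, spanned by $h_\la$. For item (4) I would iterate: the functions $\phi(z)/(z-\la)^j$ lie in $K_\Theta$ precisely for $j\le k$ (the order of vanishing controls membership via \eqref{crit}), and a short computation shows $T$ maps each such function into the span of the lower-order ones modulo $\la$, giving a single Jordan chain of length exactly $k$; that the generalized kernels stabilize at $k$ follows because $\phi/(z-\la)^{k+1}\notin H^2$.

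For item (5) I would use Proposition~\ref{Pr-adjoints-n} and Theorem~\ref{rank-one-model}: $T^*$ is again a model operator (associated with the data $(b,a,\deab^*)$), so its spectrum is obtained from the symmetry $\si(T^*)=\overline{\si(T)}=\overline{Z}_\phi\cup Z_{\tilde\phi}$. The identification of eigenvectors is cleanest via the reproducing kernel: for $f\in\cD(T)$ the relation \eqref{repr-fla} gives $\langle Tf,k_\la\rangle=2\pi i\,(Tf)(\la)$, and expanding $Tf=zf-c\phi$ shows that $k_\la$ (respectively $\tilde k_\la$) is annihilated by $T^*-\bar\la$ exactly when $\phi(\la)=0$ (respectively $\tilde\phi(\la)=0$); this matches the general principle, stated in the discussion preceding the lemma, that the eigenfunctions of $T^*$ are the reproducing kernels of $K_\Theta$.

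The main obstacle I anticipate is item (4), the Jordan-block analysis: one must be careful that membership $\phi/(z-\la)^\ell\in K_\Theta$ is governed by the \emph{order} of the zero and that the $\tilde\phi$-side (for $\la$ in the lower half-plane) is handled symmetrically, since there the relevant vanishing is of $\tilde\phi$ rather than $\phi$. The bookkeeping of which chain sits at $\la$ versus $\bar\la$, and verifying that the action of $T$ genuinely produces a nontrivial nilpotent shift on these spans rather than collapsing, is where the argument requires the most care; everything else reduces to the membership criterion \eqref{crit} and the reproducing formula \eqref{repr-fla}.
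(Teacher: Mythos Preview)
Your proposal is correct and follows essentially the same route as the paper: solve $Tf=\lambda f$ explicitly to get $f=c\phi/(z-\lambda)$, use the criterion \eqref{crit} to characterize when this lies in $K_\Theta$, handle root vectors by induction (the paper simply says ``we omit the details'' here, so your outline is actually more explicit), and verify $T^*k_\lambda=\bar\lambda k_\lambda$ via the reproducing property. Two small caveats: in item (1), your appeal to Proposition~\ref{L inverse} presupposes $\deab\ne0$, so strictly speaking you should first shift using Proposition~\ref{prop_shift} (the paper instead cites the general perturbation result of \cite{Aziz_Behr_etl}); and in item (5), your remark that $T^*$ is itself a model operator for the data $(b,a,\deab^*)$ requires $a$ to be cyclic, which is not assumed---but your actual argument via the reproducing kernel does not rely on this, so it is harmless.
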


\begin{proof}[Proof]
Since $\A $ has a compact resolvent and $\LL $ is its finite rank perturbation in the
sense of~\cite{Aziz_Behr_etl}, $(\LL -\la I)^{-1}$ is compact for any
$\la\notin \si(\LL )$. Hence the resolvent of $T$ also is compact.
This gives statement (1) and also implies that $\si(T)=\si_p(T)$.

Now let us describe the eigenvalues and eigenfunctions of $T$.
If for some $\la \in \BC$
$$
Tf=zf- c\phi  =\lambda f,
$$
then $f = \frac{c\phi}{z-\lambda}$. Hence $\la$ is in $\sigma_p(T)$ if and only if
$\frac{\phi(z)}{z-\lambda}\in K_\Theta$. It follows also that
for any $\la\in \si_p(T)$, the eigenspace $\ker (T-\la)$ is one-dimensional.
If $\Im \la \ge0$, $\frac{\phi(z)}{z-\lambda}\in K_\Theta$
if and only if $\phi(\la)=0$ (see Lemma~\ref{lem1}).
If $\Im \la <0$, then the inclusion $\frac{\phi(z)}{z-\lambda}\in K_\Theta$
is equivalent  to $\frac{\Theta(z)\overline{\phi(\overline z)}}{z-\overline \la}=
\frac{\tilde \phi(z)}{z-\overline \la} \in H^2(\BC^+)$, which
happens if and only if $\tilde \phi(\overline \la)=0$.
This proves statements (2) and (3).

It is easy to check statement (4) by applying induction in
$\ell$; we omit the details.

Now suppose that $T$ has an adjoint $T^*$. The above observations
imply that $\si_p(T^*)=\overline{Z}_\phi \cup Z_{\tilde \phi}$,
and that all eigenspaces of $T^*$ are one-dimensional. Now
let $\la\in Z_\phi$.
Then
$$
\begin{aligned}
\langle f, T^* k_\lambda \rangle
& = \langle  Tf,  k_\lambda \rangle
  =
  2\pi i \big( zf(z) - c \phi(z) \big)\big|_{z=\lambda}
& = 2\pi i \lambda f(\lambda) = \lambda \langle f, k_\lambda
\rangle
\end{aligned}
$$
for any $f\in \cD(T)$. Since $\cD(T)$ is dense in $K_\Theta$, one has
$T^*k_\la=\bar \la k_\la$.
Analogously, using the equality
$\langle f, \tilde k_\lambda \rangle = - 2\pi i \overline{\tilde f(\lambda)}$,
it is easy to show that
$T^* \tilde k_\la= \la \tilde k_\la$ for $\lambda \in Z_{\tilde \phi}$.
This gives statement (5).
\end{proof}

\begin{remarks}
{\rm
1. If $\A $ is cyclic and compact, then
$\LL $ also is compact, and
$\phi$ is meromorphic in $\BC\setminus \{0\}$.
Items (3)--(5) of the above Lemma apply to
any $\la\ne 0$. It follows, in particular, that
in this case, $\LL $ is a Volterra operator (that is,
$\si(\LL )=\{0\}$) if and only if $\phi(\la)\ne 0$
and $\tilde\phi(\la)\ne 0$
for all $\la \ne 0$ with
$\Im \la\ge 0$.

2. It is clear that the system $\{k_\lambda\}_{\lambda \in \si_p(T^*)}$
is (up to normalization) biorthogonal to the system
$\{h_\la\}_{\la \in \si_p(T)}$.

3. A statement analogous to Lemma~\ref{lem_eigs}
holds for general model spaces (not necessarily associated with meromorphic
inner functions). E.g., if $\lambda\in \BC\setminus\RR$
or $\lambda\in \RR$ and $\Theta$ is analytic in a neighborhood of $\lambda$,
we have $\lambda \in \sigma(T)$ if and only if
$\lambda \in \sigma_p(T)$ and if and only if $\phi(\lambda) = 0$.
In this case, $h_\lambda$ is an eigenfunction of $T$
while $k_\lambda$ is an eigenfunction of $T^*$ (if $T^*$ is correctly defined).}
\end{remarks}

We finish this section with the proof of Theorem~\ref{mincomp}.

\begin{proof}[Proof of Theorem~\ref{mincomp}]
(1) Since the system $\{k_\lambda\}_{\lambda\in \Lambda}$ is complete and minimal,
for a fixed
$\lambda_0 \in \Lambda$ there exists a unique (up to a constant factor)
function $g\in K_\Theta$ such that $g (\lambda)=0$, $\lambda\in
\Lambda\setminus\{\lambda_0\}$. Put $\phi = (z-\lambda_0)g$. Then $\phi$
vanishes exactly on the set $\Lambda$. Clearly, $\phi \notin H^2$ and
$\frac{\phi(z)}{z-\lambda} \in K_\Theta$ for any $\lambda\in \Lambda$.
Thus, by the converse statement in Theorem~\ref{rank-one-model},
$\Theta$ and $\phi$ correspond to some singular rank one perturbation.

(2) Suppose that $\zeta-\Theta \notin H^2$ for any $\zeta$ with $|\zeta|=1$.
We conclude that $b\notin \Elltwo(\mu)$
(otherwise, it would follow from \eqref{singul1} and
\eqref{the} that for some $\zeta$ of modulus one,
$y^{-1}(\zeta+\Theta(iy))(\zeta-\Theta(iy))^{-1}\to 0$ as
$y\to \infty$, which by \eqref{mass} gives that $\zeta-\Theta \in H^2$).
Thus, condition (${\rm A}^*$) is satisfied and $\LL^*$ is correctly defined.

(3) First we claim that in this case $\phi$ is outer in $\BC^+$.
Indeed, by the assumption, $\phi$ has no
zeros in $\BC^+$. If there were a representation
$\phi(z) = \psi(z) e^{i\ga z}$, where $\psi\in (z+i)H^2$ and $\ga > 0$, then the function
$\psi(z)\,\frac{e^{i\ga z}-1}{z}$ would belong to $K_\Theta$ and vanish on $\Lambda$, a contradiction.
Finally,
$\tilde\phi(z)=\Theta(z) \overline{\phi(\bar z)}$
also is outer in $\BC^+$, and so $\Theta = \xi^2\phi/\bar\phi$
on $\BR$ for a unimodular constant $\xi$.
Indeed, if $\Theta\bar\phi = I\phi$ for a (meromorphic) inner function $I$,
then $Ig$ also is in $K_\Theta$. If $I$ has a Blaschke factor
$\frac{z-z_0}{z-\overline z_0}$, then the function
$gI \frac{z-\lambda_0}{z-z_0}$ belongs to $K_\Theta$
and vanishes on $\Lambda$, a contradiction. The case when
$I(z) = e^{i\ga z}$ can be excluded as above.
\end{proof}


\subsection{Hermite--Biehler and Cartwright classes}
\label{herm--biehl}

An entire function $E$ is said to be in the {\it Hermite--Biehler class}
(which we denote by $HB$) if
$$
|E(z)|>|E(\overline z)|, \qquad z\in \BC^+.
$$
We also always assume that $E \ne 0$ on $\RR$.
For a detailed study of the Hermite--Biehler class see
~\cite[Chapter VII]{levin}.
Put $E^*(z) = \overline{E(\overline z)}$. If $E\in HB$, then
$\Theta = E^*/E$
is an inner function which is meromorphic in the whole plane $\BC$;
moreover, any meromorphic inner function
can be obtained in this
way for some $E\in HB$ (see, e.g.,~\cite[Lemma~2.1]{hm1}).

Given $E\in HB$, we can always write it as $E=\F-i\G$, where
$$
\F=\frac{E+E^*}{2},\qquad \G=\frac{E^*-E}{2i}.
$$
Then $\F$, $\G$ are real on the real axis
and all their zeros are real and simple.
Moreover, if $\Theta =E^*/E$, then $2\F = (1+\Theta)E$.

Any function $E\in HB$ generates the \textit{de Branges space}
${\mathcal H} (E)$, which consists of all entire functions $f$
such that $f/E$ and $f^*/E$ belong
to the Hardy space $H^2$, and $\|f\|_E = \|f/E\|_{\Elltwo({\RR})}$
(for the de Branges theory see~\cite{br}).
It is easy to see that the mapping $ f\mapsto f/E $
is a unitary operator from ${\mathcal H}(E)$ onto  $K_\Theta$
with $\Theta=E^*/E$ (see, e.g.,~\cite[Theorem~2.10]{hm1}).

The reproducing kernel of the de Branges space ${\mathcal H} (E)$
corresponding to the point $w\in\BC$ is given by
\beqn
\label{rk-deBr}
 K_w(z)=\frac{\overline{E(w)} E(z) - \overline{E^*(w)} E^*(z)}
{2\pi i(\overline w-z)} =
\frac{\overline{\F(w)} \G(z) -\overline{\G(w)}\F(z)}{\pi(z-\overline w)}.
\neqn

An entire function $F$ is said to be of {\it Cartwright class}
if it is of finite exponential type and
$$
\int_{\RR} \frac{\log^+|F(x)|}{1+x^2}dx <\infty.
$$
For the theory of the Cartwright class we refer to~\cite{hj, ko1}.
It is well-known that zeros $z_n$ of a Cartwright class function
$F$ have a certain symmetry: in particular,
\beqn
\label{cartw}
F(z) = K z^m e^{icz}\, v.p. \prod_n \Big(1- \frac{z}{z_n}\Big)
\defin
K z^m e^{icz}
\lim_{R\to \infty} \prod_{|z_n|\le R} \Big(1- \frac{z}{z_n}\Big),
\neqn
where the infinite product converges in the
``principal value'' sense, $c\in \RR$ and $K\in \BC$ are some constants,
$m\in\mathbb{Z}_+$.

A function $f$ analytic in $\BC^+$ is said to be of bounded type,
if $f=g/h$ for some functions $g$, $h\in H^\infty(\BC^+)$. If,
moreover, $h$ can be taken to be outer, we say that $f$ is in
\textit{the Smirnov class in $\BC^+$}. It is well known that if
$f$ is analytic in $\BC^+$ and $\ima f>0$, then $f$ is in the
Smirnov class~\cite[Part 2, Ch. 1, Sect. 5]{hj}. In particular, if
$t_n\in {\RR}$, $u_n>0$ and $\sum_n u_n<\infty$, then  the
function $\sum_n \frac{u_n}{t_n-z}$ is in the Smirnov class in
$\BC^+$. Consequently, $\sum_n \frac{v_n}{t_n-z}$ is in the
Smirnov class in $\BC^+$ for any $\{v_n\}\in \ell^1$.

Given a nonnegative function $m$ on $\BR$ such that
$\log m\in L ^1\big(\text{d}x/(x^2+1)\big)$,
there is a unique outer function $\cO$ of Smirnov class,
whose modulus is equal to $m$ a.e. on $\BR$
and which satisfies $\cO(i)>0$ (see~\cite[Part II, Chapter 3]{hj}).
This function will be denoted by $\cO_m$.

The following theorem due to M.G. Krein (see, e.g.,
~\cite[Part II, Chapter 1]{hj}) will be useful:
{\it If an entire function $F$ is of bounded type both in $\BC^+$
and in $\BC^-$, then $F$ is of Cartwright class.
If, moreover, $F$ is in the Smirnov class
both in $\BC^+$ and in $\BC^-$, then $F$
is a Cartwright class function of zero exponential type.}
We refer to~\cite[Section IV.8]{Gohb_Krein} for applications of these results to
the spectral theory of non-dissipative operators.

Finally, we remark that applications of de Branges spaces to the spectral theory
of discrete selfadjoint operators and their perturbations are numerous and
well-known, see for instance,~\cite{mak-polt} and also
papers~\cite{Martin} and~\cite{Silva-Toloza}, that are
closely related to our approach.
\bigskip



\section{Positive results on completeness of $\LL $ and $\LL^*$ for the case of
singular rank one perturbations. Proofs of Theorems \ref{positive}
and \ref{frombb}}

Let $\LL = \LL (\A , a,b,\deab)$ be a singular rank one perturbation
of the multiplication operator $\A $ defined in Section \ref{model}.
We assume that $\mu=\sum_n \mu_n\delta_{t_n}$ is a discrete measure.
Then Theorem~\ref{rank-one-model} provides a model operator $T$,
given in terms of meromorphic functions $\Theta$  and $\phi$
satisfying condition (\ref{main0}).

We will assume that $\phi$ and $\tilde \phi$ have only simple zeros. In the case
of multiple zeros, the root
vectors, which were calculated in Lemma~\ref{lem_eigs},
should be taken into account.
It can be checked that the same results hold in this case too and that, basically,
the same arguments work.

The spectrum of $\A $ is
$$
\sigma(\A ) =  \{t_n\} = \{x\in \RR: \, \Theta(x) = -1 \}.
$$
We will use the notation $a_n=a(t_n)$, $b_n=b(t_n)$.
We recall that the
measure $\nu$, defined in  \eqref{defnu}, is the Clark measure $\sigma_{-1}$
for $\Theta$. In our case,
$$
\nu = \sum_n  \nu_n \delta_{t_n},
\quad
\text{where }
\nu_n=\mu_n|b_n|^2
$$
(recall that $b_n \ne 0$ for any $n$),
and $\Theta'(t_n)= -2i/\nu_n$.
Finally, by \eqref{phi-a-b}, $\phi(t_n) =i a_n/b_n$.
Note that if $a_n=0$, then $\phi(t_n)=0$.


\subsection{An abstract criterion for completeness of $T^*$}
\label{abstr}

Since $\phi$ is meromorphic and $\frac {\phi}{z+i}$
is in $H^2$, its Nevanlinna factorization in $\BC^+$ has the form
$$
\phi(z)=e^{i\al z}\cdot B(z)\cdot \cO_{|\phi|}(z),
$$
where $\cO_{|\phi|}$ is the outer part of $\phi$, $B$ is a Blaschke product
in $\BC^+$ and $\al=\al(\phi)\ge 0$.
The following proposition gives a criterion for the completeness
of the eigenfunctions of $T^*$. This is a standard tool
for the study of completeness for systems of reproducing kernels
and many results of this type are known.
For the case of the Paley--Wiener spaces it goes back to
Levin~\cite[Appendix III, Theorem~6]{levin}. An explicit statement
for the de Branges spaces is given in~\cite[Theorem~2.2]{Gubr-Tar2010}.
We include a short proof of this statement.

\begin{proposition}
\label{completeness}
Let $\Theta$ and $\phi$ correspond to some rank one singular
perturbation of a cyclic operator $\A $ with discrete spectrum. More precisely, let
$\Theta$ be a meromorphic inner function and  let $\phi \notin H^2$ satisfy
\eqref{main0}.

$(1)$
If $\al(\phi)>0$ or $\al(\tilde \phi)>0$, then the system
$\{k_\lambda\}_{\lambda \in Z_\phi}
\cup \{\tilde k_\lambda\}_{\lambda \in Z_{\tilde \phi}}$
is not complete in $K_\Theta$.

$(2)$ Suppose that $\al(\phi)= \al(\tilde \phi) = 0$
and both $\phi$ and $\tilde \phi$ have only simple zeros. Then
the following two statements are equivalent:

\begin{enumerate}
\item[(i)] the system
$\{k_\lambda\}_{\lambda \in Z_\phi} \cup \{\tilde k_\lambda\}_{\lambda \in Z_{\tilde \phi}}$
is not complete in $K_\Theta$. $($Thus, operator $T^*$, whenever
it is correctly defined, is not complete.$)$

\item[(ii)]
\label{item:ii}
there exists a nonzero entire function $F$
of zero exponential type, whose zeros lie in $\BC_-\cup \RR$, such that $F\phi\in H^2$.
\end{enumerate}
Moreover, the function $F$ in {\rm (ii)} is always of Cartwright class.
\end{proposition}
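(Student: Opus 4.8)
The plan is to turn the completeness of the eigensystem of $T^*$ into a concrete analytic statement about $K_\Theta$ and then treat the two parts separately. By Lemma~\ref{lem_eigs}, the eigenfunctions of $T^*$ are the reproducing kernels $k_\la$ with $\la\in Z_\phi$ together with the functions $\tilde k_\la$ with $\la\in Z_{\tilde\phi}$. Combining the reproducing formula \eqref{repr-fla} with the identity $\langle f,\tilde k_\la\rangle=-2\pi i\,\overline{\tilde f(\la)}$, a vector $g\in K_\Theta$ is orthogonal to the whole system precisely when $g$ vanishes on $Z_\phi$ and $\tilde g$ vanishes on $Z_{\tilde\phi}$, where $\tilde g(z)=\Theta(z)\overline{g(\bar z)}$. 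So the first step is the reformulation: \emph{the eigensystem of $T^*$ fails to be complete if and only if there is a nonzero $g\in K_\Theta$ with $g|_{Z_\phi}=0$ and $\tilde g|_{Z_{\tilde\phi}}=0$}. The whole proof then consists in producing, or forbidding, such a $g$.

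For part (1), assume $\al(\phi)>0$ (the case $\al(\tilde\phi)>0$ is symmetric under $z\mapsto\bar z$). Fixing $0<\ga\le\al(\phi)$ and writing $\phi=\psi e^{i\ga z}$ with $\psi=\phi e^{-i\ga z}$, I would put
$$
g(z)=\psi(z)\,\frac{e^{i\ga z}-1}{z}=\phi(z)\,\frac{1-e^{-i\ga z}}{z}.
$$
The computation already carried out in the proof of Theorem~\ref{mincomp}(3) shows that $\psi\in(z+i)H^2$ and that this $g$ lies in $K_\Theta$ and vanishes on $Z_\phi$. It then remains only to compute the reflection: on $\RR$ one gets $\tilde g(z)=\Theta(z)\overline{g(\bar z)}=\tilde\phi(z)\,(1-e^{i\ga z})/z$, which vanishes on $Z_{\tilde\phi}$. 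Since $g\not\equiv 0$, this $g$ is orthogonal to the entire eigensystem, so $T^*$ is not complete.

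For the implication (ii)$\Rightarrow$(i) in part (2), suppose such an $F$ exists with, say, $F\phi\in H^2$, and take $g=F\phi$. Then $g\in H^2$, $g$ vanishes on $Z_\phi$, and $\tilde g=F^*\tilde\phi$ (with $F^*(z)=\overline{F(\bar z)}$) vanishes on $Z_{\tilde\phi}$. The only point to verify is $\tilde g\in H^2$: on $\RR$ one has $|\tilde g|=|F^*|\,|\tilde\phi|=|F|\,|\phi|=|g|$ because $|\Theta|=1$ there, so $\tilde g\in L^2(\RR)$; moreover $F$ is of Cartwright class (by the log‑integrability argument below), hence of Smirnov class in both half‑planes, so $F^*$ is of Smirnov class in $\BC^+$, and $\tilde\phi$ is too since $\al(\tilde\phi)=0$; a Smirnov function with $L^2$ boundary values lies in $H^2$. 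Thus $g\in K_\Theta$ by \eqref{crit} and is orthogonal to the whole system, which gives (i). The variant $F\tilde\phi\in H^2$ is handled by the reflected construction.

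The core of the proposition, and the step I expect to be the main obstacle, is (i)$\Rightarrow$(ii). Starting from a nonzero $g\in K_\Theta$ with $g|_{Z_\phi}=0$ and $\tilde g|_{Z_{\tilde\phi}}=0$, I would set $F=g/\phi$ and prove it is entire of zero exponential type with zeros in $\BC_-\cup\RR$. Entireness is a pole/zero bookkeeping: since $\al(\phi)=0$, the zeros of $\phi$ in $\BC^+$ form a Blaschke sequence cancelled by the zeros of $g$, so $F$ is analytic in $\BC^+$; the reflected identity $F^*=\tilde g/\tilde\phi$ makes $F$ analytic in $\BC^-$; and on $\RR$ the functions $\phi,\tilde\phi$ are analytic (the poles of $\nuu$ at the $t_n$ being killed by the zeros of $1+\Theta$), their real zeros being cancelled by $g,\tilde g$. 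The same factorizations show that both $F$ and $F^*$ are of Smirnov class in $\BC^+$, so $F$ is of Smirnov class in both half‑planes; by Krein's theorem $F$ is then of Cartwright class of \emph{zero} exponential type, which is exactly where the hypothesis $\al(\phi)=\al(\tilde\phi)=0$ enters. To place the zeros where (ii) asks, I would reflect each zero $\la\in\BC^+$ of $F$ across $\RR$ by the unimodular Blaschke factor $(z-\bar\la)/(z-\la)$; the pole it creates at $\la$ is cancelled by the zero of $g=F\phi$ there, so the modulus on $\RR$ is unchanged and the resulting $F_1$ still satisfies $F_1\phi\in H^2$, is of zero type, and now has all zeros in $\BC_-\cup\RR$. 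Finally, that every $F$ as in (ii) is automatically of Cartwright class follows directly: on $\RR$, $\log^+|F|\le\log^+|F\phi|+\log^+|1/\phi|$, and both summands are integrable against $dx/(1+x^2)$ (the first because $F\phi\in H^2$, the second because $\phi$ is of bounded type, so $\log|\phi|\in L^1(dx/(1+x^2))$); together with the finite (zero) exponential type this is the Cartwright condition. The hard part is precisely the bounded‑type/Smirnov bookkeeping for $F=g/\phi$ across the real line, since that is what licenses Krein's theorem and pins the exponential type at zero.
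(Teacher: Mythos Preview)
Your proof is correct and follows essentially the same route as the paper's: the same ``annihilating vector'' reformulation via \eqref{repr-fla}, the same test function $\phi(z)\frac{1-e^{-i\ga z}}{z}$ for part~(1), and the same ``divide by $\phi$, reflect via $\tilde g=\tilde\phi F^*$, apply Krein'' scheme for (i)$\Rightarrow$(ii), followed by flipping the upper half-plane zeros of $F$ across $\RR$ by Blaschke factors. The only noticeable difference is in (ii)$\Rightarrow$(i): the paper checks $\tilde g\in H^2$ by computing the inner parts explicitly (observing $F^*/F$ is a Blaschke product and $\tilde\phi/\phi$ a ratio of Blaschke products), whereas you invoke the Smirnov maximum principle ($F^*$ and $\tilde\phi$ Smirnov in $\BC^+$, and $|\tilde g|=|g|\in L^2$ on $\RR$); both are valid and equally short. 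Your explicit log-integrability argument for the ``always Cartwright'' addendum is in fact slightly more detailed than what the paper writes out.
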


\begin{proof}[Proof]
(1) Suppose that $\alpha = \al(\phi)>0$. Since $\phi$
is meromorphic, it has the form
$\phi(z)=e^{i \alpha z}\phi_1(z)$, where
$\frac{\phi_1}{z+i}$ is in $H^2$. Then the function
$\phi_1(z)\frac{e^{i\alpha z}-1}{z}$ belongs to $K_\Theta$, is non-zero and
is orthogonal to the system
$\{k_\lambda\}_{\lambda \in Z_\phi} \cup \{\tilde k_\lambda\}_{\lambda \in Z_{\tilde \phi}}$,
since the zeros sets of $\phi$ and $\phi_1$
(respectively, of $\tilde \phi$ and $\tilde \phi_1$) coincide.
Analogously, if $\tilde \al = \al(\tilde \phi)>0$, then
$e^{-i\tilde \alpha z}\frac{\tilde \phi}{z+i} \in H^2$ and, thus,
the function $\phi(z)\frac{e^{i \tilde \alpha z}-1}{z}$ is in $K_\Theta$
and is orthogonal to the system
$\{k_\lambda\}_{\lambda \in Z_\phi} \cup \{\tilde k_\lambda\}_{\lambda \in Z_{\tilde \phi}}$.

(2)  For the rest of the proof, we suppose that $\al(\phi)=\al(\tilde \phi) =0$.
Assume first that the system
$\{k_\lambda\}_{\lambda \in Z_\phi}
\cup \{\tilde k_\lambda\}_{\lambda \in Z_{\tilde \phi}}$
is not complete and so there is a nonzero $g\in K_\Theta$
such that
\beqn
\label{orthog}
\begin{aligned}
2\pi i g(\lambda) & = \langle g, k_\la \rangle =0, \qquad \la \in Z_\phi \\
2\pi i \tilde g(\lambda) & = \langle \tilde k_\la, g \rangle =0
\qquad  \la \in Z_{\tilde \phi}.
\end{aligned}
\neqn
Since $\phi$ has only simple zeros, we may write
$g = \phi\, G$ for some function $G$ which is meromorphic in
${\BC}$, analytic in ${\BC}^+$ and on ${\RR}$, and
of bounded type in ${\BC}^+$. Consider the function
$$
\tilde g(z) = \Theta(z)\, \overline{g(\overline z)} =
\Theta(z)\, \overline{\phi (\overline z)} \,
\overline{G(\overline z)}  =\tilde \phi(z) G^*(z),
$$
where $G^*(z) = \overline{G(\overline z)}$. Since $\tilde g$ vanishes at $Z_{\tilde\phi}$ we conclude that
$G^*$ has no poles in $\BC^+$ and thus $G$ is an entire function. Moreover,
since $G= g/\phi$ and $G^*= \tilde g/\tilde\phi$ in $\BC^+$ and
$\al(\phi)=\al(\tilde \phi) =0$, the functions $G$ and $G^*$ are in Smirnov class in $\BC^+$
and, by Krein's theorem, $G$ is of zero exponential type and of Cartwright class.
We have $\phi G = g\in H^2 $ and $\tilde\phi G^* = \tilde g\in H^2$.

Finally, to obtain from $G$ the function $F$ with zeros in $\BC^-\cup\RR$ note
that the zeros of $G$ in $\BC^+$ satisfy the Blaschke condition. Let $B$ be the Blaschke
product constructed over $Z_G\cap \BC^+$ (counting multiplicities). Then $F=G/B$ is an entire function
with zeros in $\BC^-\cup\RR$ which satisfies all the required properties (note that $\phi F = \phi\, G/B\in H^2$).

To prove the converse, assume that there exists $F$ as in (ii). Put
$g=F\phi$. By the assumption, $g \in H^2$, whereas
$\tilde g(z) = \tilde \phi(z)F^*(z)$. By the conditions on $F$, the ratio $F^*/F$ is  Blaschke product,
while $\tilde \phi/\phi$ is a ratio of two Blaschke products due to the condition
$\al(\phi)=\al(\tilde \phi) =0$. Hence,  $\tilde \phi(z)F^*(z)$ is in $H^2(\BC^+)$
as soon as the function $\phi F$ belongs to this space.
Thus $\tilde g$ is in $H^2$, whence $g\in K_\Theta$.
Since $g$ vanishes on the set $Z_\phi$ and $\tilde g$ on the set $Z_{\tilde \phi}$,
the function $g$ is orthogonal to
$\{k_\lambda\}_{\lambda \in Z_\phi}
\cup \{\tilde k_\lambda\}_{\lambda \in Z_{\tilde \phi}}$ by (\ref{orthog}).
\end{proof}

\begin{remarks}
{\rm 1. It is an obvious, but useful observation
that the function $F$ in the statement~2 of Proposition~\ref{completeness}
can be always chosen to have no zeros in $\BC^+$ (or in $\BC^-$).
Moreover, it is not difficult to show that in the case of real type
perturbations (i.e., $\phi = \tilde \phi$)
the function $F$ may be chosen to have {\it only real} zeros.
\smallskip

2. Condition (ii) in Proposition~\ref{completeness} is equivalent
to the condition that there exists a Cartwright class function $F_1$ of zero
exponential type, whose zeros lie in $\BC_-\cup \RR$,
and such that $F_1 \tilde \phi\in H^2$. Indeed, the inclusion $g=F\phi \in K_\Theta$
is equivalent to the inclusion $\tilde g= F^*\tilde\phi \in K_\Theta$.
Dividing by a Blaschke product we can move all zeros
of $F$ to the upper half-plane and then $F^*$
will have all zeros in $\BC^-\cup\RR$.
\smallskip

3. The above proof can be in an obvious way extended to the case when
$\phi$ or $\tilde \phi$
has multiple zeros. The statement remains unchanged: the root vectors of $T^*$
(which are of the form
$\frac{\partial^j}{\partial\overline\lambda^j}k_\lambda$ or
$\frac{\partial^j}{\partial\lambda^j}\tilde k_\lambda$ for some $j$) are not complete in $K_\Theta$
if and only if there exists a nonzero entire function $F$
of zero exponential type, whose zeros lie in $\BC_-\cup \RR$, such that $F\phi\in H^2$. }
\end{remarks}

It would be interesting to compare the above theorem with~\cite[Theorem~5]{Veselov2}, where
a completeness criterion is given in terms of Naboko's model of a nondissipative
perturbation of a selfadjoint operator.


\subsection{Sufficient conditions for completeness.}
The results stated in the Introduction show
that completeness of $T$ and $T^*$ (equivalently, $\LL $ and $\LL^*$)
are essentially different
things. In this subsection we prove several results which show that, under
some additional restrictions, both $T$ and $T^*$ are complete or
the completeness of eigenfunctions of $T^*$
(reproducing kernels $k_\lambda$) implies the completeness of
eigenfunctions of $T$ (functions $h_\lambda$).

We will need the following lemma.

\begin{lemma}
\label{maincor-1}
Suppose that the meromorphic function $\phi$ associated with
a perturbation $\LL (\A , a, b, \deab)$ satisfies the conditions
\begin{align}
\label{smooth-sing}
& \sum_n \frac{|a_n b_n| \mu_n}{|t_n|}<\infty, \\
\label{import}
& \sum_n \frac{a_n\overline b_n \mu_n}{t_n} \ne \deab.
\end{align}
%
%
Then $|\phi(iy)| \ge c y^{-1}$, $|\tilde \phi(iy)| \ge c y^{-1}$,
$y\to \infty$, for some constant $c>0$.
\end{lemma}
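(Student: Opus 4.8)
The plan is to bound $\phi$ and $\tilde\phi$ from below along the positive imaginary axis by isolating the two factors in the representation \eqref{defphi}. The starting observation is the elementary identity coming from \eqref{the},
\[
1+\Theta(iy)=\frac{2i}{i+\rho(iy)},
\]
so that, together with \eqref{defphi}, $|\phi(iy)|=|\nuu(iy)|/|i+\rho(iy)|$. Thus it suffices to bound $|\nuu(iy)|$ from below by a positive constant and $|i+\rho(iy)|$ from above by $y$, for all large $y$.

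First I would compute the limit of $\nuu(iy)$. Writing $\frac{1}{t_n-iy}-\frac1{t_n}=\frac{iy}{t_n(t_n-iy)}$ in \eqref{singul2} and using $|t_n-iy|\ge y$, each summand is dominated in modulus by $|a_nb_n|\mu_n/|t_n|$, a summable family by \eqref{smooth-sing}. Since $\frac{iy}{t_n(t_n-iy)}\to-1/t_n$ as $y\to\infty$, dominated convergence yields
\[
\nuu(iy)\ \longrightarrow\ \deab-\sum_n\frac{a_n\overline b_n\mu_n}{t_n}=:L,
\]
and $L\ne0$ precisely because of hypothesis \eqref{import}. For the denominator I would use $\rho(iy)=o(y)$, which is the estimate already invoked in the proof of Proposition~\ref{param}(1); it follows by the same dominated-convergence scheme applied to $\rho(iy)/y$, where after division by $y$ the summands of \eqref{singul1} are dominated by $|b_n|^2\mu_n/t_n^2$, a summable family since $b/x\in L^2(\mu)$. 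Hence $|i+\rho(iy)|\le y$ for all large $y$, and combining with $|\nuu(iy)|\ge|L|/2$ we obtain $|\phi(iy)|\ge(|L|/2)\,y^{-1}$ eventually.

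For $\tilde\phi$ I would use the formula $\tilde\phi(z)=\Theta(z)\overline{\phi(\overline z)}$, valid for $z\in\BC^+$ since $\Theta$ and $\phi$ are meromorphic, together with the symmetry $\overline{\Theta(\overline z)}=1/\Theta(z)$ of a meromorphic inner function. A short computation then gives
\[
\tilde\phi(z)=\frac{1+\Theta(z)}{2}\,\nuu^*(z),\qquad
\nuu^*(z)=\overline{\deab}+\int\Big(\frac{1}{x-z}-\frac1x\Big)\overline{a(x)}\,b(x)\,d\mu(x),
\]
so that $|\tilde\phi(iy)|=|\nuu^*(iy)|/|i+\rho(iy)|$ with exactly the same denominator. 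The limit computation applied to $\nuu^*$ gives $\nuu^*(iy)\to\overline L\ne0$ (using that $t_n,\mu_n$ are real and that the series converges absolutely by \eqref{smooth-sing}), and the bound $|\tilde\phi(iy)|\ge(|L|/2)y^{-1}$ follows identically, with the same constant $c=|L|/2$.

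The one step that must be handled with care is the growth of the denominator: a naive estimate would try to control $\rho(iy)$ through $\sum_n|b_n|^2\mu_n/|t_n|$, a series that need not converge under our hypotheses. The correct domination uses only $\sum_n|b_n|^2\mu_n/t_n^2<\infty$ (that is, $b/x\in L^2(\mu)$), which forces $\rho(iy)=o(y)$. The other mild subtlety is producing the correct companion function $\nuu^*$ for $\tilde\phi$ and verifying that its limit is $\overline L$, hence nonzero again by \eqref{import}; everything else is a routine application of dominated convergence.
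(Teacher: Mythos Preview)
Your proof is correct and follows essentially the same route as the paper: factor $\phi=\tfrac{1+\Theta}{2}\,\nuu$, use \eqref{smooth-sing} and dominated convergence to get $\nuu(iy)\to L\ne0$ by \eqref{import}, and then bound $|1+\Theta(iy)|$ from below by $cy^{-1}$. The only difference is in this last step: the paper invokes the general inner-function estimate $|1+\Theta(z)|\ge 1-|\Theta(z)|\ge c\,\Im z/(|z|^2+1)$, whereas you use the explicit formula $1+\Theta=2i/(i+\rho)$ together with $\rho(iy)=o(y)$. Your argument is slightly more direct here (and already implicit in the paper's proof of Proposition~\ref{param}(1)); the paper's estimate, on the other hand, is stated in a form reusable for other unimodular constants, which is exploited later in Theorem~\ref{3to1}. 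The treatment of $\tilde\phi$ via $\nuu^*$ matches the paper's remark that $\tilde\phi$ has the same representation with conjugated coefficients.
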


\begin{proof}[Proof]
By (0.11), we have
$$
2\phi(z) = \big(1+\Theta(z)\big)
\Bigg(\deab -\sum_n \frac{a_n\overline b_n \mu_n}{t_n}
  + \sum_n \frac{a_n\overline b_n\mu_n}{t_n-z}\bigg).
$$
Since
$$
\sum_n \frac{a_n\overline b_n\mu_n}{t_n-iy} =
\sum_n \frac{a_n\overline b_n\mu_n}{t_n}\cdot
\frac{t_n}{t_n-iy} \to 0, \quad y\to \infty,
$$
\eqref{import} gives that $|\phi(iy)| \ge c_1 |1+\Theta (iy)|>0$, $y\to\infty$.
It is easy to see that for any
meromorphic
inner function $\Theta$
there exists a constant $c_2>0$ such that
for any $\gamma \in \BC$, $|\gamma|=1$, and $z\in \BC^+$
\begin{equation}
\label{estim}
|\gamma +\Theta(z)| \ge 1- |\Theta(z)| \ge \frac{c_2 \ima z}{|z|^2 +1}, \qquad \Im z>1
\end{equation}
(for the proof note that if $z_0$ is a zero of $\Theta$, then
$|\Theta(z)| \le \big| \frac{z-z_0}{z-\overline z_0}\big|$
and if $\Theta$ has no zeros, then $\Theta(z)=\zeta e^{i\al z}$
for some $\al>0$, $|\zeta|=1$).
Hence, $|\phi(iy)| \ge c y^{-1}$, $y\to\infty$, where $c>0$.
The same is true  for $\tilde \phi$,  because it has the same representation
as $\phi$ with conjugate coefficients.
\end{proof}

Next we give sufficient conditions for the joint completeness
of $\LL $ and $\LL^*$. This theorem largely covers the completeness
results of Gubreev and Tarasenko (see the discussion after the proof).

\begin{theorem}
\label{3to1}
Let meromorphic $\Theta$ and $\phi$ correspond to some
singular rank one perturbation $\LL =\LL (\A , a, b, \deab)$
such that the data $(a, b, \deab)$ satisfy both $(\admone)$ and
$(\admonest)$, and let $\al(\phi)= \al(\tilde \phi) = 0$.
\begin{enumerate}

\item
Assume that at least one of the following two conditions holds:
\begin{equation}
\label{maincor1}
\limsup_{y\to\infty} y^N |\phi(iy)| >0
\end{equation}
for some $N>0$, or
\begin{equation}
\label{maincor10}
\int_{\RR} \frac{dt}{|\phi(t+i\eta)|^\taauu (1+|t|)^{N}}<\infty
\end{equation}
for some $N>0$, $\taauu>0$ and $\eta\ge 0$.
Then both $\LL $ and $\LL^*$ are complete.

\item $($Generalized weak perturbations$)$ If \eqref{smooth-sing}
and \eqref{import} hold, then the operator $\LL^*$ is correctly
defined, and both $\LL $ and $\LL^*$ are complete.
\end{enumerate}
\end{theorem}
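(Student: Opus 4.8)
The plan is to move to the functional model of Theorem~\ref{rank-one-model}, under which $\LL$ is unitarily equivalent to $T$ and $\LL^*$ to $T^*$ on $K_\Theta$; it then suffices to prove that both $T$ and $T^*$ are complete. First I would reduce Part~(2) to Part~(1). Conditions \eqref{smooth-sing} and \eqref{import} are exactly the hypotheses of Lemma~\ref{maincor-1}, which yields $|\phi(iy)|\ge c\,y^{-1}$ and $|\tilde\phi(iy)|\ge c\,y^{-1}$ as $y\to\infty$; in particular \eqref{maincor1} holds with $N=1$. Moreover \eqref{import} says $\deab\neq\sum_n t_n^{-1}a_n\overline b_n\mu_n=\omega(\A,a,b)$, and this invertibility of $\deab-\omega$ guarantees $(\admone)$ and $(\admonest)$, so $\LL^*$ is correctly defined. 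Hence Part~(2) follows once Part~(1) is established, and I concentrate on the latter.

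For $T^*$, whose eigenfunctions are the reproducing kernels $\{k_\lambda\}_{\lambda\in Z_\phi}\cup\{\tilde k_\lambda\}_{\lambda\in Z_{\tilde\phi}}$, I would invoke Proposition~\ref{completeness}. Since $\al(\phi)=\al(\tilde\phi)=0$, its part~(2) (and its proof) shows that non-completeness of this system produces a nonzero entire function $G$ of zero exponential type, of Cartwright class, with both $G\phi\in H^2$ and $G^*\tilde\phi\in H^2$. For $T$, whose eigenfunctions are $h_\lambda=\phi/(z-\lambda)$, I would run the conjugate-dual version of the same computation: a $g\in K_\Theta$ orthogonal to all $h_\lambda$ again yields, after factoring $\phi$ (resp.\ $\tilde\phi$) out of $g$ (resp.\ $\tilde g$) and clearing the Blaschke part via Krein's theorem, a nonzero entire function of zero exponential type multiplying $\phi$ or $\tilde\phi$ into $H^2$. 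Thus both completeness statements reduce to a single assertion: \emph{under \eqref{maincor1} or \eqref{maincor10} there is no nonzero entire function $G$ of zero exponential type with $G\phi\in H^2$ or $G\tilde\phi\in H^2$.}

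This exclusion is the crux. Under \eqref{maincor1} I would argue as follows. The standard pointwise bound in $H^2$ gives $|G(iy)\phi(iy)|\le C\,y^{-1/2}$ for all $y>0$, so along the sequence $y_k\to\infty$ furnished by \eqref{maincor1}, where $|\phi(iy_k)|\ge c\,y_k^{-N}$, one obtains $|G(iy_k)|\le C'\,y_k^{N-1/2}$; the companion relation $G^*\tilde\phi\in H^2$ gives the symmetric bound on the negative imaginary axis. The main obstacle is then a rigidity statement: a Cartwright-class function of zero exponential type that grows at most polynomially along a sequence tending to $i\infty$ must be a polynomial --- otherwise its infinitely many zeros would force super-polynomial growth of $\log|G(iy)|$ along the whole ray, which I would control through the Poisson (outer) representation of $\log|G(iy)|$ valid after placing the zeros in one half-plane. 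Once $G$ is a polynomial of degree $d$, the inclusion $G\phi\in H^2$ reads $\int_\RR(1+|t|)^{2d}|\phi(t)|^2\,dt<\infty$, which contradicts $\phi\notin H^2$ from \eqref{main0} (and likewise $\tilde\phi\notin H^2$, since $|\tilde\phi|=|\phi|$ on $\RR$). This completes the exclusion.

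Under the alternative hypothesis \eqref{maincor10} I would replace the pointwise bound by an integral one: writing $h=G\phi\in H^2$, on the line $\Im z=\eta$ one has $|G(t+i\eta)|\le|h(t+i\eta)|\,|\phi(t+i\eta)|^{-1}$, and H\"older's inequality against the finite integral in \eqref{maincor10} yields a finite weighted $L^{\taauu}$-norm of $G$ on that line for some $\taauu>0$. A Cartwright-class uniqueness theorem (a zero-type function with such integrability being forced to be a polynomial) again reduces matters to the polynomial case and the same contradiction with $\phi\notin H^2$. In both cases the genuine difficulty is the rigidity/uniqueness input for the Cartwright class; the reductions to the single forbidden multiplier $G$, by contrast, are routine once Theorem~\ref{rank-one-model}, Proposition~\ref{completeness} and Lemma~\ref{maincor-1} are available.
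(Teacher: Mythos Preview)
Your reduction of Part~(2) to Part~(1) via Lemma~\ref{maincor-1} is fine, as is your treatment of the completeness of $T^*$ through Proposition~\ref{completeness}. The gap is in your handling of the completeness of $T$.

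You assert that a $g\in K_\Theta$ orthogonal to all $h_\lambda=\phi/(z-\lambda)$ ``again yields, after factoring $\phi$ out of $g$,'' an entire multiplier $G$ with $G\phi\in H^2$. This is false: orthogonality to $h_\lambda$ is \emph{not} the vanishing condition $g(\lambda)=0$ --- that is orthogonality to $k_\lambda$ --- so there is no reason for $g$ to be divisible by $\phi$. In fact your reduction would force completeness of $\{h_\lambda\}$ and of $\{k_\lambda\}$ to be governed by the \emph{same} criterion on $\phi$, hence always simultaneous; but Theorem~\ref{noncompleteness2} exhibits perturbations with $\{k_\lambda\}$ complete and $\{h_\lambda\}$ not (and its proof, item~(iv), explicitly builds $h\perp\{g/(z-\lambda)\}$ with $h$ not divisible by $g$). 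So the ``conjugate-dual version'' simply does not exist.

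The paper's route is genuinely different: to prove completeness of $\LL$, it observes $\LL^*=\LL(\A,b,a,\bar\deab)$ and applies the functional model to this \emph{adjoint} operator, obtaining a new pair $(\Theta_1,\phi_1)$ on a different space $K_{\Theta_1}$, with $\tilde\phi_1=\tfrac{1+\Theta_1}{2}\beta$. Completeness of $\LL$ then becomes completeness of $T_1^*$, i.e.\ of reproducing kernels in $K_{\Theta_1}$, and Proposition~\ref{completeness} applies once one transfers the lower bounds \eqref{maincor1}, \eqref{maincor10} from $\phi$ to $\tilde\phi_1$ via the estimate \eqref{estim} for $|1+\Theta|$ and $|1+\Theta_1|$. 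This swap requires $a$ to be cyclic (so the second model is available at all); the paper handles non-cyclic $a$ by rotating to a different Clark measure (Step~3) and verifying $(\admonest)$ for the new data (Step~4). Your proposal misses this whole mechanism. A secondary point: in your polynomial-rigidity step under \eqref{maincor1}, the bound $|G(iy_k)|\le C'y_k^{N-1/2}$ is only along a sequence; the paper closes this by choosing $F$ with zeros in $\overline{\BC^-}$ so that $|F(iy)|$ is monotone in $y$, which upgrades the sequential bound to the whole ray.
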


Note that Statement (2) is a particular case of
Proposition~\ref{prp-Mats-type-sing}. A combination of
Lemma~\ref{maincor-1} with Statement (1) of Theorem \ref{3to1}
gives a direct proof of (2), which does not use Macaev's theorem.

\begin{proof}[Proof of Theorem~\ref{3to1}]
Statement (2) is a special case of (1), because by Lemma~\ref{maincor-1} in this case (\ref{maincor1}) is satisfied.
The proof of (1) will consist of several steps.
\medskip

{\it Step 1: completeness of $\LL^*$.}
Let $T$ be the model operator corresponding to $\LL $.
If the system of eigenfunctions of $T^*$
is not complete, then, by Proposition~\ref{completeness},
there exists a nonzero Cartwright class entire function $F$
of zero exponential type with
zeros in $\BC^-\cup \RR$ such that $F \phi \in H^2 $. Hence,
$$
|F(iy)\phi(iy)| \le C y^{-1/2}, \qquad y>0.
$$
Thus, if \eqref{maincor1} holds, then
$\liminf_{y\to \infty} y^{-N}|F(iy)| < \infty$. Since
$$
F(z) = K \, v.p. \prod_n \Big(1- \frac{z}{z_n}\Big), \qquad
|F(iy)|^2 = |K|^2 \prod_n \frac{x_n^2 + (y+y_n)^2}{x_n^2+y_n^2},
$$
with $z_n = x_n - iy_n$, $y_n \ge 0$, we conclude that $F$ is a
polynomial. Then $\phi\in H^2$, which contradicts Theorem~\ref{rank-one-model}.

If  \eqref{maincor10} holds, then it follows from
the H\"older inequality that
\begin{equation}
\label{sle4}
\int_\RR |F(t+i\eta)|^\gamma (1+|t|)^{-M}dt <\infty
\end{equation}
for some $\gamma\ge 0$ and $M>0$. Since $F$ is of zero type, we conclude again
that $F$ is a polynomial, a contradiction.
\medskip

{\it Step 2: completeness of $\LL $ in the case $a_n\ne 0$ for any $n$.}
By Proposition~\ref{Pr-adjoints-n},
$\LL^* = \LL _1$, where we set $\LL _1 = \LL (\A , b, a, \overline{\deab})$.
%
%
%
%
%
Then $\LL =\LL _1^*$ is
complete if and only if $T_1^*$ is complete. Note that in the
corresponding pair $(\Theta_1, \phi_1)$ we have $\phi_1 =
(1+\Theta_1) \nuu_1/2$, where $\nuu_1$ is defined by
\eqref{singul2} with the data $(b,a, \overline{\deab})$ in place
of $(a,b, \deab)$, and thus differs from $\nuu$ only by
conjugation of the coefficients. Thus, $\tilde\phi_1 =
(1+\Theta_1)\beta/2$. Now it follows from estimate (\ref{estim})
that if $\phi$ satisfies either \eqref{maincor1}  or
\eqref{maincor10} with $\eta>0$, then
$$
\limsup_{y\to\infty} y^N |\tilde \phi_1(iy)| >0
\qquad\text{or}\qquad
\int_{\RR} \frac{dt}{|\tilde \phi_1(t+i\eta)|^\taauu (1+|t|)^{N+2}}<\infty
$$
for some $\taauu>0$. If the eigenfunctions of $T_1^*$ are
not complete, then, by Proposition~\ref{completeness}
(see the remarks after it), there is
a Cartwright class entire function $F$
of zero exponential type with
zeros in $\BC^-\cup \RR$ such that $F \tilde \phi_1 \in H^2 $
and we conclude, as in Step 1, that $F$ is a polynomial,
which is a contradiction because $\tilde \phi_1\notin H^2$.

The case $\eta=0$ is a bit more tricky. In this case we have
$$
\int_{\RR} \frac{dt}{|\phi(t)|^\taauu (1+|t|)^{N}} =
\int_{\RR} \bigg|\frac{1+\Theta_1(t)}{1+\Theta(t)}\bigg|^\taauu
\cdot \frac{dt}{|\tilde \phi_1(t)|^\taauu (1+|t|)^{N}} <\infty,
$$
and we conclude that
$$
\int_{\RR} \bigg|\frac{1+\Theta(t)}{1+\Theta_1(t)}\bigg|^\gamma
\frac{|F(t)|^\gamma}{(1+|t|)^M} \,dt <\infty
$$
for some $\gamma>0$ and $M>0$. Since $F$ is a Cartwright
class function of zero type with zeros in $\BC^-\cup \RR$, we conclude that $F$
is an outer function in $\BC^+$ and so is
$\frac{1+\Theta}{1+\Theta_1} F$. Hence, we conclude that
$\frac{1}{(z+i)^M}\,\big(\frac{1+\Theta}{1+\Theta_1} \big)^\gamma F^\gamma
\in H^1(\BC^+)$.
Thus, we have also
\begin{equation}
\label{sle1}
\int_{\RR} \bigg|\frac{1+\Theta(t+i)}{1+\Theta_1(t+i)}\bigg|^\gamma
\frac{|F(t+i)|^\gamma}{(1+|t|)^M} \,dt <\infty.
\end{equation}
Applying again (\ref{estim}) we conclude that taking a larger $M$
we may omit the factor $\big|\frac{1+\Theta(t+i)}{1+\Theta_1(t+i)}\big|^\gamma$
in (\ref{sle1})
and get (\ref{sle4}). Thus, $F$ is a polynomial, which once again gives a contradiction.
\medskip

If some of the coefficients $a_n$ are zero (in other words, if $a$ is not cyclic for
$\A^{-1}$), our model does not formally
apply to $\LL_1$. We reduce the problem to the case where
$a$ is cyclic by considering the representation of the function
$\phi$ with respect to a different Clark measure for the same
space $K_\Theta$.
\medskip

{\it Step 3: reduction to the case where $a$ is cyclic.}
We need to show that in conditions of the theorem the system
$\{h_\lambda\}_{\lambda \in Z_\phi\cup \overline Z_{\tilde \phi}}$
is complete in $K_\Theta$. Let $\zeta \in \BC$, $|\zeta| = 1$, $\zeta \ne -1$,
and let $\sigma_\zeta$ be the corresponding Clark measure defined by
(2.5). We may choose $\zeta$ such that $\zeta - \Theta\notin H^2$
and $\Theta(0) \ne \zeta$ (thus, $p(\zeta) = 0$).
Since $\Theta$ is meromorophic, $\sigma_\zeta$ is  a discrete measure,
$\sigma_\zeta = \sum_n \nu_n^\circ \delta_{t^\circ_n}$, where
$\{t^\circ_n\} = \{t: \Theta(t) = \zeta\}$, and we have, for some $q^\circ\in \RR$,
$$
\frac{\zeta+\Theta(z)}{\zeta-\Theta(z)}= iq^\circ
+\frac{1}{i}
\sum_n \Big(\frac{1}{t^\circ_n-z} -\frac{1}{t^\circ_n} \Big) \,
\nu^\circ_n,  \qquad z\in\mathbb{C^+}.
$$
Since the function $\phi$ satisfies \eqref{main0}, we can represent it, as in the
proof of Theorem~\ref{rank-one-model},
$$
2\phi(z) = \big(1 -\zeta \Theta(z)\big)
\Bigg(\deab^\circ + \sum_n \bigg(\frac{1}{t^\circ_n-z} - \frac{1}{t^\circ_n}\bigg)
c^\circ_n \nu^\circ_n\bigg),
$$
for some $\deab^\circ$ and $c_n^\circ$ satisfying
$\sum_n (t^\circ_n)^{-2} |c^\circ_n|^2 \nu^\circ_n <\infty$.
Let us write
$$
\nu^\circ_n = |b^\circ_n|^2 \mu_n, \qquad
c^\circ_n = \frac{a_n^\circ}{b_n^\circ} \mu_n,
$$
such a representation exists and is unique up to the
choice of the arguments of $b^\circ_n$.
Recall  that by the properties of the Clark measures, $\sum_n  (t^\circ_n)^{-2}
\nu^\circ_n<\infty$.
Then if we put $a^\circ=(a^\circ_n)$, $b^\circ = (b^\circ_n)$, then
$a^\circ/x, \, b^\circ/x \in \Elltwo(\mu^\circ)$, where
$\mu^\circ = \sum_n \mu_n \delta_{t^\circ_n}$.

Note that $\phi(t_n^\circ) = i a_n^\circ/b_n^\circ$. We may choose $\zeta$
so that $\phi(t_n^\circ) \ne 0$ for all $n$.
Indeed, it suffices to choose $\zeta$ to be different from
all values of $\Theta$ at the zeros of the meromorphic function $\phi$
(which form a countable set).
Thus, with this choice of $\zeta$ we have $a_n^\circ \ne 0$ for all $n$.

At the same time, since $\phi$ satisfies \eqref{main0},
it follows from Proposition~\ref{param} (applied to the inner function
$\Theta^\circ = -\bar \zeta \Theta$ in place
of $\Theta$) that either $a^\circ \notin \Elltwo(\mu^\circ)$ or
$a^\circ \in \Elltwo(\mu^\circ)$ and
\begin{equation}
\label{admcirc}
\deab^\circ \ne \sum_n\frac{a_n^\circ \overline{b_n^\circ} \mu_n}{t_n^\circ}.
\end{equation}
Thus the data $(a^\circ, b^\circ, \deab^\circ)$ satisfy the condition $(\admone)$
(with $t_n^\circ$ in place of $t_n$), whence the
rank one perturbation $\LL ^\circ = \LL (\A^\circ, a^\circ, b^\circ, \deab^\circ)$
of the operator of the multiplication by $x$ in $\Elltwo(\mu^\circ)$ is well defined.
The pair $(\Theta^\circ, \phi)$ will be the model pair for $\LL ^\circ$.
Assume for the moment that the data $(a^\circ, b^\circ, \deab^\circ)$ also
satisfy the condition $(\admonest)$ and so $(\LL ^\circ)^*$ is well defined
(we prove it in Step 4 below).
Since $\phi$ satisfies (\ref{maincor1})
and (\ref{maincor10}), while now both $a_n^\circ \ne 0$ and $b_n^\circ \ne 0$
for all $n$, it follows from Step 2 that $(\LL ^\circ)^*$ is complete, and so
the system
$\{h_\lambda\}_{\lambda \in Z_\phi\cup \overline Z_{\tilde \phi}}$ is
complete in $K_{\Theta^\circ}$. But for $\Theta^\circ = -\bar \zeta \Theta$
we clearly have
$K_{\Theta^\circ}= K_\Theta$,
and so $\LL^\circ$ is unitarily equivalent to $\LL$.

{\it Step 4: the data  $(a^\circ, b^\circ, \deab^\circ)$  satisfy condition $(\admonest)$.}
To complete the proof, it
 remains to verify $(\admonest)$ for the data $(a^\circ, b^\circ, \deab^\circ)$.
If $b^\circ \notin \Elltwo(\mu^\circ)$, then there is nothing to prove. Assume
that $b^\circ \in \Elltwo(\mu^\circ)$, which means that $\sigma_\zeta(\RR) = \sum_n \nu_n^\circ<\infty$.
It follows that there exists $\xi$, $|\xi| = 1$, such that
$\xi - \Theta(iy) = O(y^{-1})$, $y\to\infty$, whence $\xi - \Theta \in K_\Theta$
by \eqref{mass}.
By the construction, $\xi \ne -1$ and $\xi \ne \zeta$.  Then it follows that
$\sum_n |b_n|^2 \mu_n = \sigma_{-1}(\RR) <\infty$
(indeed, $\|\xi- \Theta\|_{\Elltwo(\sigma_{-1})} = \|\xi+1\|_{\Elltwo(\sigma_{-1})}<\infty$).
Since the data $(a, b, \deab)$ satisfy $(\admonest)$ we have
$$
\deab \ne \sum_n\frac{a_n \overline{b_n} \mu_n}{t_n}.
$$
Comparing two expansions for $2\phi$,
$$
\big(1+\Theta(iy)\big)
\Bigg(\deab -\sum_n \frac{a_n\overline b_n \mu_n}{t_n}
  + \sum_n \frac{a_n\overline b_n\mu_n}{t_n-iy}\bigg) =
\big(1- \bar\zeta \Theta(iy)\big)
\Bigg(\deab^\circ -\sum_n \frac{a_n^\circ\overline{b_n^\circ}\mu_n}{t^\circ_n}
  + \sum_n \frac{a_n^\circ\overline{b_n^\circ}\mu_n}{t_n^\circ-iy}\bigg),
$$
as $y \to\infty$   and using the fact that $\Theta(iy)\to \xi$,
we conclude that (\ref{admcirc}) is satisfied (otherwise, the right-hand side would
tend to 0 as $y\to\infty$, while the left-hand side has a nonzero finite limit).
\end{proof}

\begin{remarks}
{\rm  1. Theorem~\ref{3to1} implies Theorems~2.3 and~2.4 from \cite{Gubr-Tar2010}.
In~\cite[Theorem~2.4]{Gubr-Tar2010}, completeness of $\LL $ was proved
under the following assumptions on the function $\phi$:
there exists a weight $w$ satisfying
the Muckenhoupt $(A_2)$ condition such that
$$
|\phi(t)|\le w(t), \quad t\in \RR, \qquad\text{and}\qquad
\int_\RR \frac{w(t)}{|\phi(t)|^2(1+t^2)}dt <\infty.
$$
Since $\int_\RR \frac{dt}{w(t)(1+t^2)}<\infty$ for any $(A_2)$-weight,
already the second inequality implies the inequality
$\int_\RR \frac{dt}{|\phi(t)|(1+t^2)}<\infty$.
The hypotheses of Theorem~2.3 in~\cite{Gubr-Tar2010} also imply
the same inequality. Thus, condition
(\ref{maincor10}) of our Theorem~\ref{3to1} is much weaker than the hypothesis of
Theorems~2.3 and~2.4 in ~\cite{Gubr-Tar2010}. It is not so surprising, because
the Muckenhoupt condition, which is intrinsic to the unconditional bases problem, seems to be too restrictive in the
completeness problems. Theorem~\ref{3to1} does not formally cover the case
when $(\admonest)$ is not satisfied, but it is anyway obvious that in this case
$\LL $ is not complete (indeed, we have $a\notin \Elltwo(\mu)$
and $\Ker \LL ^{-1} = \{0\}$, while $b\in \Ker (\LL ^{-1})^* \ne \{0\}$).

2. Essentially, the argument of Step 3
reduces to considering perturbations
%
%
%
%
%
%
$\A\mapsto \A+ \tau b\cdot b^* \mapsto \A + a\cdot b^*$ and to
choosing a parameter $\tau \in \RR$ such that $a$ is
cyclic for $\A+\tau bb^*$. To be more precise, one has to speak of
a singular perturbation $\LL(\A, b, b, \tau')$ instead of
$\A+ \tau b\cdot b^*$.
}
\end{remarks}


\subsection{Proofs of Theorems \ref{positive} and \ref{frombb}}
We conclude this section with the proofs of Theorems
\ref{positive} and \ref{frombb}.
The proof of Theorem~\ref{frombb} will be based on
the results of~\cite[Theorems 1.2 and 5.2]{bar-belov}.

\begin{proof}[Proof of Theorem~\ref{positive}]
Note that the condition
$\sum_{n\in \mathcal{N}} |t_n|^{-1}|a_nb_n|\mu_n = \infty$
implies that $a,b\notin \Elltwo(\mu)$,  and so $\LL =\LL(\A, a, b, \deab)$
and $\LL^*$ are well defined. Since $b_n\ne 0$ for any $n$ we can
consider the functional model for $\LL$.
If $\nuu$ is given by (\ref{singul2}), then
$$
\ima \nuu(iy) = \sum_n \frac{ya_n\overline b_n \mu_n}{t_n^2+y^2}.
$$
If $a_n\overline b_n\ge 0$ for all $n$, we have
$$
\lim_{y\to\infty} y\ima \nuu(iy)  = \sum_n a_n\overline b_n\mu_n \in (0, \infty],
$$
and therefore $|\nuu(iy)| \ge Cy^{-1}$, $y\to\infty$.
Hence, by (\ref{estim}), $|\phi(iy)|\ge C_1 y^{-2}$, $y\to\infty$,
and so the system $\{k_{\lambda}\}_{\lambda \in Z_\phi
\cup \overline{Z_\phi}}$ is complete in $K_\Theta$ by
Theorem~\ref{3to1} (note that $\tilde \phi = \phi$).

We show that in the general case $|\phi(iy)|\ge Cy^{-N}$
for some $N$. To simplify the notations put $u_n = a_n\overline b_n\mu_n$.
Since there is only finite number of negative
terms $u_n$ and an infinite number of positive terms,
there exists a nonnegative $k_0\in\BZ$ such that
$$
\sum_n u_n t_n^{2k} = 0, \quad 0\le k<k_0,
\quad \text{while}\enspace \sum_n u_n t_n^{2k_0}\ne 0
$$
(it may happen that $\sum_n u_n t_n^{2k_0}= \infty$).
Then
$$
\frac{\ima  \nuu(iy)}{y} =
\sum_n \frac{u_n}{t_n^2+y^2} = \frac{1}{y^2}\sum_n\frac{u_n}{1+
\frac{t_n^2}{y^2}} =
\frac{(-1)^{k_0}}{y^{2k_0}} \sum_n\frac{u_n t_n^{2k_0}}{ y^2 + t_n^2}
$$
(in the last equality, the formula
$$
\frac{1}{1+q} = \sum_{k=0}^{k_0-1} (-1)^k q^k +\frac{(-1)^{k_0} q^{k_0}}{1+q}
$$
has been applied to $q= t_n^2/y^2$). Since
$$
\lim_{y\to \infty} y^2 \sum_n \frac{u_n t_n^{2k_0}}{ y^2 + t_n^2}
= \sum_n u_n t_n^{2k_0}\in (-\infty, \infty]\sm \{0\},
$$
we have
$|\ima\nuu(iy)|\ge Cy^{-2k_0-1}>0$ for $y>y_0>0$.
By \eqref{estim}, similar estimate holds for $\phi$ and we
can apply Theorem~\ref{3to1}.
\end{proof}

\begin{proof}[Proof of Theorem~\ref{frombb}]
By Proposition~\ref{Pr-adjoints-n}, $\LL^* = \LL (\A , b,a, \overline{\deab})$.
Note that, by $(i)$ or $(ii)$ $a_n\ne 0$ for any $n$, and so we can
consider the model from Theorem~\ref{rank-one-model} for the
perturbation $\LL^*$. The functions $\Theta$ and $\phi$ are then defined by
formulas (\ref{the}) and (\ref{defphi}) where the roles of $a$ and $b$
are interchanged. Completeness of $\LL $ means that the corresponding
operator $T^*$ is complete, and we need to show that completeness of the system
$\{k_\lambda\}_{\lambda\in Z_\phi}$ implies the completeness of the system
$\{h_\lambda\}_{\lambda\in Z_\phi}$.

In the first case the measure $\nu = \sum_n \nu_n\delta_{t_n}$,
$\nu_n = |a_n|^2\mu_n $, is the Clark measure for $K_\Theta$.
Note that $a\notin \Elltwo(\mu)$ if and only if $\nu(\BR)=\infty$.
Now the statement follows immediately from~\cite[Theorem~1.2]{bar-belov}.
Since $\phi(t_n) = b_n/a_n$, the second condition
means that $|\phi(t_n)| \le C |t_n|^{N}$ for some $N>0$.
Application of~\cite[Theorem~5.2]{bar-belov}
completes the proof.
\end{proof}
\bigskip



\section{Noncomplete perturbations. Proof of Theorem~\ref{noncompleteness2}.}

This section is devoted to the proof of
Theorem~\ref{noncompleteness2}, which shows that
for any spectrum there exist singular rank one
perturbations such both $\LL $ and $\LL^*$ are correctly defined
(actually, both $a$ and $b$ are not in $\Elltwo(\mu)$)
and, moreover, $\LL^*$ is complete, while $\LL $ is not.
The proof is based on an extension of an idea
suggested by Yurii Belov in~\cite[Example 1.3]{bar-belov}.

It is much easier to see that
that for any cyclic selfadjoint operator $\A$ with
any spectrum $\{t_n\}$ such that $|t_n| \to \infty$, $|n| \to\infty$,
there exists a real type rank one singular perturbation $\LL$ of $\A$,
which is not complete. Indeed, it suffices to
take a singular perturbation $\LL$ such that
$a$, $b$ are real, $b\in \Elltwo(\mu) $, $a\notin \Elltwo(\mu)$
and $\int x^{-1}a(x)\overline b(x) d\mu(x)=-1$.
Then it is easy to see that the adjoint to the
bounded operator $\LL^{-1}$ has non-trivial kernel, which
implies the non-completeness of $\LL$. In this example,
$(\admonest)$ does not hold, so that
$\LL^*$ is not correctly defined.

We will use the following lemma on entire functions
(a close result can be found in \cite[\S 2]{nk-volterra}).
For an entire function $F$ with zeros $z_n$ (counting multiplicities)
we use the standard notation $n_F(r) = {\rm card}\,\{n:\,  |z_n| \le r\}$.

\begin{lemma}
\label{lem_entire}
For any sequence $\{t_n\}_{n\in\NN} \subset \RR$, $t_n\to\infty$,
there exists a function $n:\, [0,\infty) \to [0, \infty)$,
$n(r) = o(\log r)$ and $n(r)\to \infty$, $r\to \infty$,
with the following property:
there does not exist a non-constant entire function $U$ of order less than $1$
with real zeros such that
\smallskip
\begin{enumerate}
\item
$U(0)\ne 0$\textup; \
$\int_0^R  \frac{n_U(r)}{r}dr =
o\big(\int_0^R  \frac{n(r)}{r}dr\big)$, $R\to \infty$\textup;
\medskip

\item the sequence $\{U(t_n)\}_{n\in\NN}$ is bounded.
\end{enumerate}
\end{lemma}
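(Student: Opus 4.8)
The plan is to argue by contraposition: I would construct the gauge $n$ explicitly from the sequence $\{t_k\}$ and then show that any non-constant competitor $U$ obeying (2) is forced to carry so many zeros that the integral $\int_0^R \frac{n_U(r)}r\,dr$ cannot be $o\Big(\int_0^R \frac{n(r)}r\,dr\Big)$. In other words, boundedness of $U$ on $\{t_n\}$ together with non-constancy should impose a definite lower bound on the zero-counting function $n_U$, and I would design $n$ so that this lower bound always exceeds the gauge.

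First I would record the two classical facts that drive the argument. By Jensen's formula, for any entire $U$ with $U(0)\neq 0$,
\[
\int_0^R \frac{n_U(r)}r\,dr=\frac1{2\pi}\int_0^{2\pi}\log|U(Re^{i\theta})|\,d\theta-\log|U(0)|,
\]
so the left side of condition (1) is exactly the (shifted) circular mean of $\log|U|$, and for $U$ of order $<1$ it is comparable to $\log M(r,U)$. Secondly, for an entire function of order $\rho<1$ the minimum-modulus ($\cos\pi\rho$) theorem produces a set of radii of full relative density on which $\min_{|z|=r}\log|U(z)|\ge(\cos\pi\rho-o(1))\log M(r,U)$; since all zeros of $U$ are real, this upgrades to a lower bound for $|U(x)|$ at those real points $x$ that lie at a definite relative distance from the zero set of $U$.

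The core step is then a dichotomy for a non-constant bad $U$. Fixing a large $t_n$, either $t_n$ lies within relative distance $\varepsilon$ of some zero of $U$, or the minimum-modulus estimate gives $|U(t_n)|\gtrsim M(|t_n|,U)^{c}$ with $c=c(\rho,\varepsilon)>0$. As $U$ is non-constant of finite order, $M(r,U)\to\infty$, so the second alternative would drive $|U(t_n)|\to\infty$ along a subsequence and violate (2). Hence, for all but finitely many $n$, the function $U$ must have a zero in a fixed relative neighbourhood of $t_n$; counting these zeros yields $n_U(r)\gtrsim \#\{k:\ |t_k|\le r\}$ and, after integration, a lower bound for $\int_0^R \frac{n_U(r)}r\,dr$ governed by the counting function of the sequence itself. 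With this forced lower bound in hand, I would build $n$ by an inductive procedure along a rapidly increasing sequence of scales $R_j$, arranging simultaneously that $\log r=o(n(r))$ and that $\int_0^R\frac{n(r)}r\,dr$ remains below a fixed multiple of the forced lower bound infinitely often, so that no non-constant $U$ can satisfy the $o(\cdot)$ relation of (1).

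The hard part will be exactly this last balancing act, and it is where the fine structure of $\{t_k\}$ must be exploited. The minimum-modulus step only manufactures, roughly, one zero per point $t_n$, so the forced growth of $n_U$ is dictated by the density of the spectrum, whereas the statement demands a gauge with $\log r=o(n(r))$; reconciling the two requires choosing $n$ as a carefully regularized minorant of the sequence's counting function and checking that the quantitative, real-zero form of the $\cos\pi\rho$ estimate indeed forces enough zeros at the chosen scales. I expect the technical heart of the proof to lie in making this inductive choice of $n$ precise and in pinning down the constants in the minimum-modulus bound uniformly in $\rho<1$.
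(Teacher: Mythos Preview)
Your dichotomy step has a real gap. From the minimum-modulus alternative you conclude that each large $t_n$ must lie within relative distance $\varepsilon$ of some zero of $U$, and then you jump to $n_U(r)\gtrsim\#\{k:|t_k|\le r\}$. But a single zero $u$ of $U$ produces a relative $\varepsilon$-neighbourhood $[(1-\varepsilon)u,(1+\varepsilon)u]$ of length $\sim 2\varepsilon u$, which can absorb arbitrarily many of the $t_n$'s (nothing in the hypotheses prevents $\{t_n\}$ from being dense, e.g.\ $t_n=n$). So the correspondence is not one-to-one, and the forced lower bound on $n_U$ simply does not follow. Without it, the inductive construction of $n$ you sketch has nothing to balance against.

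The paper's argument runs in the \emph{opposite} direction, and this is the missing idea. One chooses a very lacunary auxiliary sequence $x_k$ (with $2x_k<x_{k+1}^{1/2}$, arranged so that each gap $(2x_k,x_{k+1}^{1/2})$ contains some $t_n$) and takes $n$ to be its counting function; then condition~(1) is used as an \emph{upper} bound, forcing $n_U(r)=o(\log^2 r)$. A function with so few zeros must leave infinitely many of the long intervals $[x_k,x_{k+1}]$ zero-free, and a direct estimate of the canonical product $\prod(1-x/u_j)$ shows that at the $t_n$ sitting in the middle of such a zero-free interval one has $\log|U(t_n)|\to+\infty$, contradicting~(2). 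No $\cos\pi\rho$ machinery is needed; the sparsity of zeros is the input, not the output.
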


\begin{proof}[Proof]
Choose a sequence $\{x_k\}_{k\in \NN}$, $x_k>0$, with the
following properties: $x_1=2$, $2x_k < x_{k+1}^{1/2}$ and each
interval $(2x_k, x_{k+1}^{1/2})$ contains at least one point from
the sequence $\{t_n\}$. Then $x_k\ge 2^{2^{k-1}}$.
Take $n$ to be the counting function
of the sequence $\{x_k\}$. Assume now that $U$
satisfies $(1)$, $(2)$ and let us prove that it has to be constant. By (1),
we have $n_U(r) \le n(er)\log
(er) =o(\log^2 r)$, $r\to\infty$. Also, for infinitely many $k$ we
have $[x_k, x_{k+1}] \cap Z_U =\emptyset$. Fix any such $k$ and
let $x\in (2x_k, x^{1/2}_{k+1})$. Denote by $\{u_n\}$ the set of
zeros of $U$. Without loss of generality assume that $U(0)=1$.
Then we have
$$
\log |U(x)| = \sum_n \log\Big|1-\frac{x}{u_n}\Big|.
$$
Let us estimate the summands whose contribution is negative, that is, those with
$u_n>x_{k+1}$ (note that $x>2x_k$ and so $x/u_n >2$ for $0 < u_n < x_k$):
$$
\begin{aligned}
\sum_{u_n> x_{k+1}} \log\Big|1-\frac{x}{u_n}\Big| & =
\int_{x_{k+1}}^\infty \log\Big(1-\frac{x}{r}\Big) \,dn_U(r)  \\
& =  -    \quad
  n_U(x_{k+1}) \log\Big(1-\frac{x}{x_{k+1}}\Big) -
x \int_{x_{k+1}}^\infty \frac{n_U(r)}{r(r-x)}\,dr = O(1),
\end{aligned}
$$
when $k$ is
sufficiently large, since $x/x_{k+1} \le x_{k+1}^{-1/2}$. Thus,
$$
\log |U(x)| = \sum_n \log\Big|1-\frac{x}{u_n}\Big| =
\sum_{n: \; u_n<x_k} \log\Big(\frac{x}{u_n}-1\Big) +O(1) \to + \infty
$$
as $k\to\infty$,
$[x_k, x_{k+1}] \cap Z_U =\emptyset$ and  $x\in
(2x_k, x^{1/2}_{k+1})$. This contradicts condition $(2)$, because
the interval $(2x_k, x^{1/2}_{k+1})$ contains points from the sequence
$\{t_n\}$.
\end{proof}

\begin{proof}[Proof of Theorem~\ref{noncompleteness2}]
In view of Theorem \ref{rank-one-model}, we may prove an analogous
statement in the model space.
Thus, we need to find an inner function $\Theta$ and an outer function $\phi$
such that $\Theta = \phi/\bar\phi$ on $\RR$,
$1+\Theta \notin H^2$,
$\phi \notin H^2$, $\frac{\phi}{z+i} \in H^2$,
$\{t\in \RR: \Theta(t) = -1\} = \{t_n\}$
and the system $\{h_\lambda\}_{\lambda\in Z_\phi}$
of eigenfunctions of $T$ is not complete in $K_\Theta$,
while the system $\{k_\lambda\}_{\lambda\in Z_\phi}$
of eigenfunctions of $T^*$ is complete in $K_\Theta$.
Note also that $\ker \LL$ (or $\ker \LL^*$) is nontrivial if and only
if $\kappa = 0$, which is equivalent to $\phi(0) = 0$ by
Proposition \ref{param}, (5).

The function $\Theta$ in question is meromorphic
and, in view of the discussion in Subsection \ref{herm--biehl}
we may reformulate the problem in terms of de Branges spaces. Thus, we
need to construct a space $\he$, $E=A-iB$, and
an entire function $g$ with the following properties:
\smallskip

(\textbf{i}) $A\notin \he$;

(\textbf{ii}) $g$ is real on $\RR$, has only simple real zeros,
$g(0) \ne 0$ and $Z_g \cap \{t_n\} = \emptyset$,
where $\{t_n\}$ are the zeros of $A$;

(\textbf{iii}) $g\notin \he$, but $\frac{g(z)}{z-\lambda} \in \he
$, $\lambda\in Z_g$;

(\textbf{iv}) the system
$\big\{\frac{g(z)}{z-\lambda}\big\}_{\lambda\in Z_g}$ is not
complete in $\he$ and its orthogonal complement is
infinite-dimensional;

(\textbf{v})
the system $\{K_\lambda\}_{\lambda \in Z_g}$ of
reproducing kernels \eqref{rk-deBr} is complete in $\he$.

\smallskip

If such $E$ and $g$ are constructed, it will remain to put $\Theta
= E^*/E$ and $\phi = g/E$;
they will satisfy the properties listed above.
\bigskip
\\
{\bf Strategy of the proof.}
Without loss of generality we may assume that $\mathbb{N} \subset
\mathcal{N}$ and $t_n\to\infty$, $n\to\infty$. Assume also that
$|t_n|\ge 1$, $n\in \mathcal{N}$.
Let us fix an entire function $A$
which is real on $\RR$ and whose zero set is exactly $\{t_n\}_{n\in
\mathcal{N}}$.
Choose a subsequence $\mathcal{N}_1 = \{n_k\}$ of
$\mathcal{N}$ such that $t_{n_{k+1}} > 2t_{n_k}>0$. By Lemma
\ref{lem_entire}, there exists a function $n_0: [1,\infty)$, such that if $U$ is an
entire function of order less than one
whose zero counting
function $n_U$ satisfies $\int_1^R  \frac{n_U(r)}{r}dr =
o\big(\int_1^R  \frac{n_0(r)}{r}dr\big)$, $R\to \infty$, and
$\{U(t_{n_k})\}$ is bounded, then $U$ is a constant.

The main step of the proof will be a construction of an entire
function $S$ of order zero such that
$\int_0^R \frac{n_S(r)}{r}\,dr = o\big(\int_0^R  \frac{n_0(r)}{r}\, dr\big)$,
$R\to \infty$,
and of an entire function $g$ with simple real zeros distinct from
$\{t_n\}$ that have the following properties:

(a) $g$ has the representation
\begin{equation}
\label{arb0}
\frac{g(z)}{A(z)} = \sum_{n\in \mathcal{N}} \frac{d_n}{z-t_n},
\end{equation}
where $d_n$ are real, nonzero and, for any $N>0$,
\begin{equation}
\label{arb1}
2^{|n|} |d_n| =o(|t_n|^{-N}), \qquad |n|\to\infty,\  n\notin \mathcal{N}_1,
\end{equation}
\begin{equation}
\label{arb2}
2^k |d_{n_k}| = o(\tk^{-N}), \qquad k\to\infty
\end{equation}
(thus, the sum in the definition of $g$ converges absolutely and uniformly on compact sets);

(b) $g$ admits the estimate
\begin{equation}
\label{arb3}
\frac{C_1}{|S(z)|}\cdot \bigg(\frac{|\ima z|}{|z|^2+1}\bigg)^2
\le \bigg|\frac{g(z)}{A(z)}\bigg| \le \frac{C_2}{|S(z)|}\cdot \bigg(\frac{|z|^2+1}{|\ima z|}\bigg)^2,
\qquad z \notin\RR,
\end{equation}
for some constants $C_1,\, C_2>0$.

Once $g$ has been constructed, we will define a de Branges space $\he$, where $E=A-iB$, $A\notin \he$
and will
prove that $g$ and $E$ satisfy the above properties (i)--(v).
%
\bigskip
\\
{\bf Construction of $S$ and $g$.}
Define entire functions $A_0$ and $B_0$ by the formulas
\begin{equation}
\label{arb7}
A_0(z) = \prod_k \bigg(1-\frac{z}{t_{n_k}}\bigg), \qquad  \frac{B_0(z)}{A_0(z)} = \sum_k \frac{v_k}{t_{n_k} - z},
\end{equation}
where $v_k \in (1,2)$ are chosen so that $B_0(t_n) \ne 0$
for $n\in \mathcal{N} \setminus \mathcal{N}_1$ and $B_0(0) \ne 0$.
Thus $B_0/A_0$ is a Herglotz function and each interval $(t_{n_k}, t_{n_{k+1}})$ contains exactly one zero
of $B_0$, which we denote by $s_k$. Choose a subsequence $\{s_{k_j}\}$
of $\{s_k\}$ so sparse that the entire function
$$
S(z) = \prod_j \bigg(1-\frac{z}{s_{k_j}}\bigg)
$$
satisfies $\int_0^R  \frac{n_S(r)}{r}dr = o\big(\int_0^R  \frac{n_0(r)}{r}dr\big)$ as $R\to \infty$.

Since $t_{n_{k+1}} > 2t_{n_k}$, an easy estimate of the sum in the second equation
in \eqref{arb7} shows that there exists $\delta>0$ such that $\dist (s_l, \{t_{n_k}\}) \ge \delta$.
Therefore, for any $N>0$, we have $t_{n_k}^N = O(|S(\tk)|)$, $k\to \infty$, and, in particular, $\sum_k |S(\tk)|^{-1} <\infty$.
Hence, we have
\beqn
\label{p_k}
\frac{B_0(z)}{S(z)A_0(z)} = \sum_k \frac{v_k}{S(t_{n_k}) (t_{n_k} - z)}
=\sum_k \frac {p_k} {t_{n_k} - z},
\neqn
where $p_k=\frac{v_k}{S(t_{n_k})}$.
%
%
Indeed, the residues in the right- and left-hand sides coincide and it is easily seen that their difference is an entire function
of zero type (apply Krein's theorem) which tends to $0$ along $i\RR$, and thus is identically zero.
Next, put
\begin{equation}
\label{arb8}
\gamma(z)  = 1 +\sum_{n\in \mathcal{N} \setminus \mathcal{N}_1}\frac{q_n}{t_n - z},
\end{equation}
where $q_n>0$ and $\sum q_n < 1$ (thus, $\gamma(0) \ne 0$).
The size of $q_n$ will be specified later.

Now we define the entire function $g$ by
\beqn
\label{def-g}
\frac{g(z)}{A(z)} = \frac{B_0(z)}{S(z)A_0(z)} \,\gamma (z).
\neqn
%
%
It is real on $\BR$, has only real zeros and $g(0) \ne 0$,
since $B_0(0)\gamma(0) \ne 0$.
Therefore \textbf{(ii)} holds.
Note that in \eqref{p_k}, $|2^kp_k| = o(\tk^{-N})$ for any $N>0$. We have
$$
\begin{aligned}
\frac{g(z)}{A(z)} & =
\bigg(\sum_k \frac{p_k}{\tk - z}\bigg)\bigg(
1 +\sum_{n\in \mathcal{N} \setminus \mathcal{N}_1} \frac{q_n}{t_n - z}\bigg) \\
& = \sum_k \bigg( 1 - \sum_{n\in \mathcal{N} \setminus \mathcal{N}_1}  \frac{q_n}{t_{n_k} - t_n}\bigg)
\frac{p_k}{t_{n_k}-z} + \sum_{n\in \mathcal{N} \setminus \mathcal{N}_1} \bigg(\sum_k \frac{p_k}{t_{n_k} - t_n}\bigg) \frac{q_n}{t_n-z}.
\end{aligned}
$$
Now we specify the choice of $q_n$ justifying the convergence in the last equation
and the validity of the interchange of sums. Namely, we put
$$
q_n = (2|t_n|)^{-|n|} \min_k |t_n - \tk|, \qquad n\in \mathcal{N}
\setminus \mathcal{N}_1.
$$
Then it is clear that the function $g/A$ has the representation
\eqref{arb0} with the coefficients
$$
\begin{aligned}
d_{n_k} & =  - p_k \Big(
1  - \sum_{n\in \mathcal{N} \setminus \mathcal{N}_1} \frac{q_n}{t_{n_k} - t_n}\Big), \\
d_n & = - q_n \sum_k  \frac{p_k}{t_{n_k} - t_n},
\qquad
n\in \mathcal{N} \setminus \mathcal{N}_1,
\end{aligned}
$$
which have the properties \eqref{arb1}--\eqref{arb2}.
Note also that the coefficients at $(z-t_n)^{-1}$ are non-zero
for any $n\notin \mathcal{N}_1$ by the smallness of $q_n$,
while $d_{n_k} \ne 0$ by the choice of $v_k$.

By an easy estimate of the Herglotz
integral, for any function $f$ with $\ima f> 0$ in $\BC^+$ we have
\begin{equation}
\label{arb9}
\frac{C|\ima z|}{|z|^2+1} \le |f(z)| \le \frac{C'(|z|^2+1)}{|\ima z|},
\qquad z\notin \RR,
\end{equation}
for some $C, C' > 0$.
Since both $B_0/A_0$ and $\gamma$ are Herglotz functions,  \eqref{def-g}
implies the estimate \eqref{arb3}.
%
%
%
%
\bigskip
\\
\textbf{Choice of the weights and construction of the de Branges space $\he$.}
Choose another subsequence of indices $\mathcal{N}_2 = \{m_j\}\subset \BN$ so that
$\mathcal{N}_1\cap \mathcal{N}_2 = \emptyset$,
$t_{m_{j+1}}>2\tj$, which is so sparse that for the entire function
$$
T(z) = \prod_j \bigg(1-\frac{z}{\tj}\bigg)
$$
we have $|yT(iy)| = o(|S(iy)|)$, $|y| \to \infty$ and
\begin{equation}
\label{arb4}
\sum_{n\notin \mathcal{N}_1} 2^{|n|} d_n^2 |T(t_n +i )|^2 <\infty, \qquad
\sum_k 2^k d_{n_k}^2 |T(\tk+i)|^2 <\infty.
\end{equation}
This is possible by the properties \eqref{arb1}--\eqref{arb2} of the coefficients $d_n$.

Now we define the positive weights $\nu_n$ as follows:
$$
\nu_{n_k} = d_{n_k}^2, \qquad n_k \in \mathcal{N}_1,
$$
$$
\nu_{m_j} = 1, \qquad m_j \in \mathcal{N}_2,
$$
$$
\nu_n = 2^{|n|} d_n^2, \qquad n\notin \mathcal{N}_1\cup \mathcal{N}_2.
$$
By construction $\sum_{n\in \mathcal{N}} \nu_n = \infty$, but
$\sum_{n\in \mathcal{N}} t_n^{-1}\nu_n <\infty$. Indeed,
$\sum_{n\notin \mathcal{N}_2}\nu_n <\infty$, while  $\sum_{j}
t_{m_j}^{-1} < \infty$. Therefore, we may define an entire
function $B$ by
\beqn
\label{frac_B_A}
\frac{B(z)}{A(z)} = \sum_n
\frac{\nu_n}{t_n - z}.
\neqn
Since $\ima \frac{B}{A} > 0 $ in
$\BC^+$, we get that $E=A-iB$ is a function in the
Hermite--Biehler class. Let $\Theta = E^*/E$ be the corresponding
meromorphic inner function. Then we have $A = E(1+\Theta)$ and
$$
\frac{1-\Theta(z)}{1+\Theta(z)} = \frac{1}{i}\sum_n \frac{\nu_n}{t_n - z}.
$$
Thus, by (\ref{mass}) $1+\Theta\notin H^2$ (equivalently, $A\notin \he$).
This gives \textbf{(i)}.
\bigskip
\\
{\bf Proof of (iii).} First we notice that $g\notin\he$. Indeed, if $g$ were
an element of $\he$, then
from the fact that $\nu = \sum_n \nu_n\delta_n$
is a Clark measure $\sigma_{-1}$ for $K_\Theta$, we would get
$$
\sum_n \frac{|g(t_n)|^2}{|E(t_n)|^2} \,\nu_n =
\sum_n d_n^2 \frac{|A'(t_n)|^2}{|B(t_n)|^2} \,\nu_n <\infty.
$$
By \eqref{frac_B_A}, $B(t_n)/A'(t_n) = -\nu_n$. Hence
the latter sum equals to
$\sum_n \nu_n^{-1}d_n^2$, which is infinite by the definition of $\nu_n$,
a contradiction. Thus $g\notin\he$.

%
%
%
%
%
On the other hand, if $\lambda $ is a zero of $g$ (notice that $Z_g \cap \{t_n\} = \emptyset$), then
$\la\in \BR$ and
$$
\sum_n \frac{|g(t_n)|^2}{(t_n-\lambda)^2 |E(t_n)|^2} \,\nu_n =  \sum_n \frac{d_n^2}{(t_n - \lambda)^2 \nu_n} <\infty.
$$
Recall that
$$
\|K_{t_n}\|_E^2=\frac{|B(t_n)A'(t_n)|} \pi\, , \quad \text{ whence}\quad
\widetilde K_{t_n}(z) = \frac{K_{t_n}(z)}{\|K_{t_n}\|_E} =
- \nu_n^{1/2} \frac{\sqrt{\pi}\, \sign B(t_n)\,A(z)}{z-t_n}.
$$
Therefore, the interpolation formula
$$
\frac{g(z)}{(z-\lambda)A(z)} = \sum_{n\in N} \frac{d_n} {t_n-\lambda} \cdot \frac{1}{z-t_n}
$$
(which clearly holds since the difference of the left- and right-hand parts is an
entire function of zero type which tends to zero
along the imaginary axis) can be rewritten as
$$
g(z) = \sum_n \frac{d_n}{\nu_n (t_n-\lambda)} \cdot\frac{A(z)}{z-t_n} =
-\pi^{-1/2} \sum_n
\frac{d_n\sign B(t_n)}{\nu_n^{1/2} (t_n-\lambda)} \cdot \widetilde K_{t_n}(z),
$$
and thus $\frac{g}{z-\lambda} \in \he$ for any $\lambda\in Z_g$.
\medskip

If we put $\phi = g/E$, then $\phi$ and $\Theta$ correspond,
by Theorem \ref{rank-one-model}, to
an almost Hermitian singular rank one perturbation associated with certain
$\mu$, $a$ and $b$.
In particular, condition $({\rm A})$ is satisfied.
Moreover,
\[
\sum_n |a_n|^2\mu_n=
\sum_n |\phi(t_n)|^2\nu_n =
\sum_n \frac {|g(t_n)|^2}{|E(t_n)|^2}\, \nu_n
= +\infty.
\]
On the other hand,
by construction, $\sum_n \nu_n = \sum_n |b_n|^2\mu_n = \infty$, thus $b
\notin \Elltwo(\mu)$ and $({\rm A^*})$ is satisfied. Therefore, the
adjoint operator is correctly defined.
\bigskip
\\
{\bf Proof of (iv).}
First let us show that
the system $\big\{\frac{g(z)}{z-\lambda}\big\}_{\lambda \in Z_g}$ is not complete in $\he$.
Consider the function
$$
T_1(z) = \prod_j\Big(1-\frac{z}{t_{m_j} + \delta_j}\Big),
$$
where $\delta_j>0$ are chosen to be so small that
$$
\sum_j|T_1(t_{m_j})|^2 =\sum_j \nu_{m_j}|T_1(t_{m_j})|^2   <\infty.
$$
 Also, for sufficiently small $\delta_j$
we have $|T_1(t_n)| \le |T_1(t_n+i)| \asymp |T(t_n+i)|$, whence, by~\eqref{arb4}
$$
\sum_{n\notin \mathcal{N}_1\cup \mathcal{N}_2} \nu_n |T_1(t_n)|^2 = \sum_{n\notin \mathcal{N}_1 \cup \mathcal{N}_2} 2^n d_n^2|T_1(t_n)|^2<\infty
$$
and
$$
\sum_k \nu_{n_k} |T_1(\tk)|^2 = \sum_k d_{n_k}^2 |T_1(\tk)|^2 <\infty.
$$
Thus, $\sum_n \nu_n |T_1(t_n)|^2 <\infty$.

Now we show that the function $T_1 g$ admits the following expansion:
\begin{equation}
\label{vybornulei1}
\frac{T_1(z)g(z)}{ A(z)} =  \sum_n \frac{g(t_n)}{B(t_n)}\cdot\frac{c_n}{z-t_n}\,\nu_n^{1/2}
\end{equation}
with some $\{c_n\} \in\ell^2$. Indeed, put
$$
c_n = \frac{T_1(t_n)B(t_n)}{A'(t_n) \nu_n^{1/2}} = -T_1(t_n)\nu_n^{1/2}.
$$
Then $\{c_n \} \in \ell^2$. Since $\frac{g(z)}{z-\lambda} \in \he$ and so  $\big\{\frac{g(t_n)}{(t_n-\lambda)E(t_n)}\nu_n^{1/2}\big\} \in \ell^2$
for any $\lambda \in Z_g$, the series on the right-hand side of \eqref{vybornulei1} converges uniformly on compact sets
and it is easily seen that the residues on the left- and right-hand side
of \eqref{vybornulei1} coincide. Therefore,
$$
H(z) =
\frac{T_1(z)g(z)}{ A(z)} - \sum_n \frac{g(t_n)}{B(t_n)}\cdot\frac{c_n}{z-t_n}\,\nu_n^{1/2}
$$
is an entire function of zero exponential type and $|H(iy)| \to 0$ as $|y| \to 0$ (recall that $y|T_1(iy)| = o(|S(iy)|)$
while $|g(iy)|/ |A(iy)| \le C|y|/|S(iy)|$).
Thus, $H\equiv 0$.

Now put $z=\lambda$, $\lambda\in Z_g$,  in
(\ref{vybornulei1}). We have
$$
\sum_n \frac{g(t_n)}{B(t_n)}\cdot\frac{c_n}{\lambda -t_n}\nu_n^{1/2}= 0, \qquad
\lambda\in Z_g,
$$
which is equivalent to
$$
\Big\langle \frac{g(z)}{z-\lambda}, h\Big\rangle=0, \quad \lambda
\in Z_g, \quad \text{where} \quad h=\sum_n \nu_n^{1/2}
\frac{c_n}{B(t_n)} K_{t_n}.
$$
Since $\nu_n^{1/2} \big|B(t_n)\big|^{-1} K_{t_n}$ coincides with the normalized
kernel $\widetilde K_{t_n}$ up to a constant factor, we have $h\in \he$.
Thus, the orthogonal complement to the system $\frac{g(z)}{z-\lambda}$,
$\lambda\in Z_g$, is nontrivial.

Moreover, the above argument
works as well for the function $T_1/P_m$,
where $P_m(z) = (z-z_1)\dots(z-z_m)$ and $z_1,\dots,z_m$ are the first $m$ zeros of $T_1$.
Clearly, the functions $h_m$, constructed with $T_1/P_m$ in place of $T_1$, are linearly independent. Thus, the orthogonal complement to
the system $\frac{g(z)}{z-\lambda}$, $\lambda\in Z_g$ is infinite-dimensional.
\bigskip
\\
{\bf Proof of (v).} Now we show that $g$ is the generating function of a complete system of
reproducing kernels. If the system
$\{K_\lambda\}_{\lambda\in Z_g}$ is not complete in $\he$, then there exists a nonzero
entire function $U$ such that $Ug\in \he$.
We have $gU/E = h\in H^2(\BC^+)$. Since
$g/A = (B_0/A_0) \cdot \gamma \cdot S^{-1}$, where both $A_0/B_0$ and $\gamma$ are functions
with positive imaginary part
(and, thus, in the Smirnov class) and $S$ is of zero order, we conclude that
$$
U = h\cdot \frac{E}{A} \cdot \frac{A_0}{B_0} \cdot \gamma^{-1} S
$$
is in the Smirnov class in the upper half-plane.
Since $gU^*$ is also in $\he$, the same is true for $U$ in the lower half-plane, and we conclude,
by Krein's theorem, that $U$ is of zero type.

Since $h\in H^2(\BC^+)$, we have $|h(z)| \le C_1(\ima z)^{-1/2}$, $z\in \BC^+$.
Also, by  (\ref{estim}) $|2A(z)|/|E(z)| = |1+\Theta(z)| \ge C_2 (|z|^2+1)^{-1} \ima z$,
$z\in \BC^+$.
Applying \eqref{arb9} to $B_0/A_0$ and to $\gamma$ we conclude that
$$
|U(z)| \le C_3 \frac{(|z|^2+1)^3}{(\ima z)^{7/2}} |S(z)|, \qquad z\notin \RR.
$$

We could have taken $U$ such that $U(0)=1$.
Then, by the Jensen formula,
$$
\begin{aligned}
\int_0^R \frac{n_U(r)}{r}dr & = \frac{1}{2\pi} \int_0^{2\pi}
\log |U(Re^{it})|dt \\
& \le
\log C_3 +
\frac{1}{2\pi} \int_0^{2\pi}
\log |S(Re^{it})|dt + 3\log (R^2+1) -
\frac{7}{4\pi} \int_0^{2\pi}
\log R |\sin t| \,dt \\
& = \int_0^R \frac{n_S(r)}{r}dr  +O(\log R)
=  o\bigg(\int_0^R  \frac{n_0(r)}{r}dr\bigg), \qquad R\to \infty.
\end{aligned}
$$

On the other hand, it follows from the inclusion $gU\in\he$ that
$$
\sum_n \frac{|g(t_n)U(t_n)|^2}{|E(t_n)|^2}\nu_n<\infty.
$$
Since for $n = n_k$ we have
$ |g(t_n)|^2 |E(t_n)|^{-2} \nu_n = 1$
(see the proof of $(ii)$),
we conclude that $\sum_{k} |U(t_{n_k})|^2 <\infty$.
Thus $\{U(t_{n_k})\}$ is bounded, whence, by the choice of $n_0$, $U\equiv const$.
However, $g\notin \he$, and so  $U \equiv 0$.
This contradiction proves  that the system
$\{K_\lambda\}_{\lambda \in Z_g}$ is complete in $\he$.
\end{proof}
\medskip



\section{Rank one perturbations of a compact selfadjoint operator}

In this section we complete the proofs
of the results for usual (bounded) perturbations of
compact selfadjoint operators stated in Introduction.
All the proofs will be based on a reduction to the
equivalent problem for singular rank one perturbations.

Let us fix the notations which will be used throughout this section.
To distinguish between the bounded and singular perturbations,
we use in the latter case the notation $\ta, \tl$, etc.

Let $\A$ be a compact selfadjoint operator with simple
spectrum $\{s_n\}$, $s_n \ne 0$, that is, the
operator of multiplication by the independent variable
in some space $\Elltwo(\mu)$, $\mu = \sum_n \mu_n\delta_{s_n}$.
Denote by $\ta$ the operator of multiplication by $x$
in $\Elltwo(\tilde \mu)$, where $\tilde \mu = \sum_n \mu_n\delta_{t_n}$,
$t_n = s_n^{-1}$. Then, clearly, $\A$ is unitarily  equivalent
to the inverse $\ta^{-1}$ via the obvious unitary identification
$(Ua)(t_n) = a(s_n)$ between the spaces $\Elltwo(\mu)$ and $\Elltwo(\tilde \mu)$.

For $a=(a_n), b=(b_n) \in \Elltwo(\mu)$ and $\deab\in \mathbb{C}$, $\deab \ne 0$,
consider the bounded rank one perturbation $\LL = \A - \deab^{-1} ab^*$ of $\A$.
Let us assume that $\ker \LL = 0$,
which means that either $a\notin x\Elltwo(\mu)$ or $a\in x\Elltwo(\mu)$ and
$\sum_n s_n^{-1} a_n \bar b_n \mu_n \ne \deab$. Put
\beqn
\label{bab1}
\taa_n = s_n^{-1}a_n, \qquad \tb_n= s_n^{-1} b_n
\neqn
and consider $\taa = (\taa_n)$ and $\tb=(\tb_n)$ as elements
of $\Elltwo(\tilde \mu)$ (assuming $\tilde a(t_n) = \tilde a_n$).
In our notations we can write $a = \A U^{-1}\taa$, $b = \A U^{-1}\tb$.
Then $x^{-1}\taa, x^{-1}\tb \in \Elltwo(\tilde \mu)$ and
the data $(\taa, \tb, \deab)$ satisfy condition (${\rm A }$).
Define the singular rank one perturbation
$\tl = \LL(\ta, \taa, \tb, \deab)$ of $\ta$. By Proposition \ref{L inverse},
\beqn
\label{bab2}
\tl = \big(\ta^{-1} - \deab^{-1} (\ta^{-1}\taa) (\ta^{-1} \tb)^*\big)^{-1} =
U\big(\A - \deab^{-1} ab^* \big)^{-1}U^{-1},
\neqn
and so $\tl$ is unitary equivalent to the algebraic inverse of $\LL$.

\subsection{Completeness of rank one perturbations: proofs of Theorem~\ref{genweak0}--\ref{frombb0}}

Theorem \ref{genweak0} follows immediately from
Proposition~\ref{gen_weak_Mats}.
The second condition in \eqref{smooth33} is exactly the required (one-dimensional)
invertibility condition.

\begin{proof}[Proof of Theorem \ref{positive0}]
Define $\ta$, $\taa$ and $\tb$ as above. Then
$\taa_n\overline{\tb}_n \ge0$
for all values of $n$ except a finite number and
$\sum_n |t_n|^{-1} |\taa_n \tb_n| \mu_n =
\sum_n |s_n|^{-1}|a_nb_n|\mu_n =\infty$.
In particular, $\taa, \tb \notin \Elltwo(\tilde \mu)$
and so the perturbation $\tl = \tl(\ta, \taa, \tb, -1)$
and its adjoint $\tl^*$  are well defined.
By Theorem~\ref{positive}, both $\tl$ and $\tl^*$ are complete. Hence,
$\LL =\A +ab^*$ and $\LL^*$ are complete (note that $\LL^*$
is unitary equivalent to $(\tl^*)^{-1}$).
\end{proof}

\begin{proof}[Proof of Theorem \ref{adjoint0}]
Apply Theorem~\ref{noncompleteness2} to the unbounded selfadjoint
operator $\ta$ (multiplication by $x$ in $\Elltwo(\tilde \mu)$) with
discrete spectrum $\{t_n\}$: there exists a singular rank one
perturbation $\tl  = \tl (\ta, \taa, \tb, \deab)$ such that both
$\tl$ and $\tl^*$ are well-defined, $\tl$ has simple real spectrum
and is complete, but $\tl^*$ is not complete (and the orthogonal
complement to the span of its eigenvectors is
infinite-dimensional).

Recall that in the construction of Theorem \ref{noncompleteness2}
both $\tilde a$ and $\tb$ do not belong to
$\Elltwo(\tilde \mu)$ and so conditions (${\rm A }$) and (${\rm A}^*$)
are satisfied. Moreover, by construction,
$\phi(0) \ne 0$ for the corresponding function $\phi$, whence, by
statement (5) in Proposition~\ref{param},
$\deab \ne 0$. Then, by Proposition~\ref{L inverse},
the operator $\tl^{-1}$ is unitary equivalent to the operator
$\LL = \A - \deab^{-1} ab^*$ with $a, b$ related to $\taa, \tb$ by \eqref{bab1}.

Thus, $\LL$ is complete while $\LL^*$ is not.
Also, $a, b\notin x \Elltwo(\mu)$, and so
$\ker \LL =\ker \LL^* = 0$.
\end{proof}

\begin{proof}[Proof of Theorem \ref{frombb0}]
Consider $\taa$ and $\tb$ defined by \eqref{bab1}. Since $a, b \notin x \Elltwo(\mu)$,
the data $(\taa, \tb, -1)$ satisfy (A) and (${\rm A}^*$) and the singular
rank one perturbation $\tl = \tl(\ta, \taa, \tb, -1)$
(note the reverse order of $\taa$ and $\tb$)
is well defined as well as its adjoint $\tl^*$.
By the conditions $(i)$ or $(ii)$ the data $\taa$ and $\tb$
satisfy either $|\taa_n|^2 \mu_n \ge C |t_n|^{-N}>0$
or $|\tb_n \taa_n^{-1}| \le C |t_n|^{N}$ for some $N>0$.
Since $\LL = \A+ab^*$ is complete, its algebraic
inverse (up to the unitary equivalence)
$\tl$ is also complete. Now the operator $\tl^*$ is complete
by Theorem~\ref{frombb}, whence $\LL^*$ is also complete.
\end{proof}


\subsection{Proofs of spectral synthesis results}
\label{pssr}
Here we prove Theorems~\ref{synthesis} and \ref{sp_sint}.

\begin{proof}[Proof of Theorem~\ref{synthesis}]
Let the spectrum $\{s_n\}$ of $\A$ be ordered so that
$s_n$ are positive and decrease for $n\ge 0$, and $s_n$ are
negative and increase for $n<0$. Put $t_n=s_n^{-1}$. Then
$\{t_n\}$ is an increasing sequence satisfying
$t_n\to \infty$, $|n|\to \infty$.

By \cite[Theorem 1.4]{bbb1}, there exists a de Branges space
$\he$ and an entire function $G$ such that:

(i) $\F = \frac{E+E^*}{2}$ vanishes exactly on the set $\{t_n\}$;

(ii) $\Theta \defin E^*/E$ has a representation
\beqn
\label{repr-Tht}
\frac{1-\Theta(z)}{1+\Theta(z)}=\frac 1 i\, \int \Bigl( \frac 1
{t-z} - \frac t {t^2+1} \Bigr)\, d\nu(t),
\neqn
where $\nu=\sum_n \nu_n \delta_{t_n}$ and $\sum_n \nu_n = \infty$;

(iii) $G$ has only simple and real zeros, $Z_G\cap \{t_n\} = \emptyset$,
$G(0) \ne 0$, $G\notin\he$, but the functions
$\frac{G}{z-\lambda}$ are in $\he$ for any $\lambda\in Z_G$;

(iv) The systems $\{K_\lambda\}_{\lambda\in Z_G}$  and
$\big\{ \frac{G(z)}{G'(\lambda)(z-\lambda)} \big\}_{\lambda\in Z_G}$ are complete in
$\he$ and biorthogonal one to another, but are
not hereditarily complete
(as before, here $K_\lambda$ stand for the
reproducing kernel of $\he$)
\footnote{In \cite[Theorem 1.6]{bbb}
a de Branges space $\he$ and an entire function $G$ satisfying (i)--(iv)
were constructed under the additional restriction
$c|t_n|^{-N} \le t_{n+1} - t_n = o(|t_n|)$, $|n|\to\infty$,
for some $c,\, N>0$.};

(v) Moreover, for some partition $Z_G = \Lambda_1 \cup\Lambda_2$,
$\Lambda_1\cap \Lambda_2 = \emptyset$, of the zero set $Z_G$ the orthogonal
complement to the mixed system
$$
\Big\{\frac{G(z)}{z-\lambda}\Big\}_{\lambda\in \Lambda_1}
\cup \{K_\lambda\}_{\lambda\in\Lambda_2}
$$
in $\mathcal{H}(E)$  is infinite-dimensional.

Notice that \eqref{repr-Tht} implies that
$1+\Theta\notin H^2$ (see \eqref{clark-meas} and \eqref{mass}).
On the other hand, one also has $\zeta+\Theta\notin K_\Theta$ (and, thus,
$\zeta+\Theta\notin H^2$) for any $|\zeta|=1$, $\zeta\ne 1$.
Indeed, $\zeta+\Theta\notin \Elltwo(\nu)$, because
$\zeta+\Theta$ is a nonzero constant on the sequence $\{t_n\}$ and
$\nu\big(\mathbb{R}\big)=\infty$.

Put  $\phi = G/E$. By the converse statement in
Theorem \ref{rank-one-model}, $\Theta$ and $\phi$
are functional parameters of the model of some
almost Hermitian singular rank one perturbation
$\tl \defin \tl (\ta, \taa, \tb, \varkappa)$ of the
operator $\ta$ of multiplication by $x$ in some space
$\Elltwo(\tilde \mu)$, $\tilde \mu = \sum_n \mu_n \delta_{t_n}$.
Here $\{t_n\} = \{x: \Theta(x) = -1\}$
and $\nu_n = |\tb_n|^2\mu_n$.

Since $\phi(0) \ne 0$, we conclude that $\varkappa \ne 0$
(see Proposition~\ref{param}, (5)).
Since $\zeta+\Theta \notin H^2$ for any constant $\zeta$,
$|\zeta|=1$, by Theorem~\ref{mincomp},
$\tl (\ta, \taa, \tb, \varkappa)$ has an adjoint, so that
the data $(\taa, \tb, \varkappa)$ satisfy  $(\admone)$
and $(\admonest)$. By Proposition~\ref{L inverse},
$\tl^{-1}$ is a unitary equivalent to a bounded rank one perturbation $\LL$
of the operator $\A$; namely,
$\LL = \A-\deab^{-1} ab^*$, where $a,b$ are related to $\taa, \tb$
by \eqref{bab1} (see \eqref{bab2}).
Conditions $(\admone)$ and $(\admonest)$ are equivalent to $\ker\LL=\ker\LL^*=0$.

The systems $\big\{\frac{\phi}{z-\lambda}\big\}_{\lambda\in Z_G}$
and $\{k_\lambda\}_{\lambda\in Z_G}$
are unitarily equivalent to the systems of eigenfunctions
of $\LL$, $\LL^*$, respectively.
By (iv), the systems $\{k_\lambda\}_{\lambda\in Z_G}$ and
$\big\{\frac{\phi}{z-\lambda}\big\}_{\lambda\in Z_G}$ are minimal
and complete in $K_\Theta$.
Since these systems are not hereditarily complete,
by applying the above-cited theorem by Markus~\cite[Theorem~4.1]{markus70},
we conclude that $\LL$ does not admit spectral synthesis.
\end{proof}

In the proof of Theorem \ref{sp_sint} we will use the following lemma.

\begin{lemma}
\label{distan}
Let $\Theta$ be a meromorphic inner function and let
$\nu = \sum_{n\in I}
\nu_n \delta_{t_n}$ be its Clark measure $\sigma_\alpha$ for some $\alpha$
\textup(we assume that $\{t_n\}$ is an increasing sequence\textup).
The following statements are equivalent\textup:

\begin{enumerate}
\item[(i)] There exist $C, N >0$ such that
$|\Theta'(t)| \le C(|t|+1)^N$, $t\in \RR$\textup;

\item[(ii)] There exist $c, M>0$ such that
\begin{equation}
\label{hypot1}
\nu_n \ge c (|t_n|+1)^{-M}, \qquad
t_{n+1} - t_n \ge c (|t_n|+1)^{-M}.
\end{equation}
\end{enumerate}
\end{lemma}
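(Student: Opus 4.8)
The plan is to transfer everything to the boundary argument of $\Theta$. Since $\Theta$ is a meromorphic inner function, on $\RR$ we may write $\Theta(t)=e^{i\vartheta(t)}$ for a real-analytic, nondecreasing phase $\vartheta$, and then $|\Theta'(t)|=\vartheta'(t)\ge 0$. Two bookkeeping identities drive the proof. First, if $t_n$ is an atom of $\sigma_\alpha$, i.e. $\Theta(t_n)=\alpha$, then comparing the residue of $(\alpha+\Theta)/(\alpha-\Theta)$ at $t_n$ with the Herglotz representation \eqref{clark-meas} gives $\Theta'(t_n)=2i\alpha/\nu_n$, so that $\vartheta'(t_n)=|\Theta'(t_n)|=2/\nu_n$. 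Second, the atoms of $\sigma_\alpha$ are exactly the consecutive solutions of $\vartheta\equiv\arg\alpha\ (\mathrm{mod}\ 2\pi)$, whence $\int_{t_n}^{t_{n+1}}\vartheta'(s)\,ds=2\pi$. I would prove the two implications separately.

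For $(1)\Rightarrow(2)$ the weight bound is immediate: $\nu_n=2/|\Theta'(t_n)|\ge (2/C)(|t_n|+1)^{-N}$. For the gap bound I would use $2\pi=\int_{t_n}^{t_{n+1}}\vartheta'\le (t_{n+1}-t_n)\sup_{[t_n,t_{n+1}]}|\Theta'|$ together with \eqref{cartw}-type growth from (1). If the gap already exceeds $(|t_n|+1)^{-N}$ there is nothing to prove; otherwise it is at most $1$, so $|t_{n+1}|\le 2|t_n|$ (for $|t_n|\ge 1$; the finitely many small $|t_n|$ are absorbed into the constant), the supremum is $\le C(1+|t_n|)^{N}$, and $t_{n+1}-t_n\ge c(|t_n|+1)^{-N}$. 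This gives (2) with $M=N$.

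The substance is the converse $(2)\Rightarrow(1)$. Here I would expand the reproducing kernel $k_t$ in the orthogonal Clark basis $\{k_{t_n}\}$ of $K_\Theta$ (recall $p_\alpha=0$, as $\sum_n\nu_n=\infty$ unless $\sigma_\alpha$ is finite, a case to treat apart). Using \eqref{repr-fla}, $\|k_{t_n}\|^2=4\pi/\nu_n$ and $\langle k_t,k_{t_n}\rangle=2\pi i\,(1-\overline{\Theta(t)}\alpha)/(t_n-t)$, Parseval yields the exact identity
$$
\vartheta'(t)=\frac{\|k_t\|^2}{2\pi}=\frac{|1-\overline{\Theta(t)}\alpha|^2}{2}\sum_n\frac{\nu_n}{(t_n-t)^2},\qquad |1-\overline{\Theta(t)}\alpha|^2=4\sin^2\tfrac{\vartheta(t)-\arg\alpha}{2}.
$$
The point is that the oscillatory prefactor vanishes to second order at every $t_n$, which cancels the singular diagonal terms. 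For $t\in(t_m,t_{m+1})$ I would bound the two nearest summands using $\sin^2\tfrac{\vartheta(t)-\arg\alpha}{2}\lesssim(\vartheta'(t_m)(t-t_m))^2$ near $t_m$, so that the prefactor times $\nu_m/(t_m-t)^2$ is $\lesssim \vartheta'(t_m)^2\nu_m=4/\nu_m$, and similarly $\lesssim 4/\nu_{m+1}$; by the weight bound in (2) both are $\le C(|t|+1)^{M}$. For the remaining terms I would use $|1-\overline{\Theta(t)}\alpha|^2\le 4$ and the crucial fact that finiteness of the Clark measure, $\sum_n\nu_n/(1+t_n^2)<\infty$, forces $\nu_n\le S(1+t_n^2)$ with $S=\sum_n\nu_n/(1+t_n^2)$; thus the weights grow at most quadratically. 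The atoms within distance $1$ of $t$ number at most $C(|t|+1)^{M}$ (gap bound), each is separated from $t$ by at least $c(|t|+1)^{-M}$, so each contributes $\lesssim S(1+t_n^2)(|t|+1)^{2M}$, while the far atoms are handled by a dyadic summation against $\sum_n\nu_n/(1+t_n^2)$ giving an $O((1+|t|)^2)$ factor. Collecting these gives $\vartheta'(t)\le C(1+|t|)^{N}$.

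The step I expect to demand the most care is precisely this uniform estimate of the weighted sum. The real danger is a very flat stretch of $\vartheta$ (a large weight) at one atom, which forces $\vartheta$ to ``catch up'' over a neighbouring interval and can produce a tall narrow spike of $\vartheta'$; one must verify that the two lower bounds in (2), combined with the quadratic ceiling $\nu_n\lesssim 1+t_n^2$ coming from Clark-measure finiteness, keep the height of every such spike polynomially bounded in its location. Equivalently, the spikes correspond to near-real zeros of $\Theta$, and any zero whose distance to $\RR$ decayed faster than a power of its abscissa would surface in $\sigma_\alpha$ either as a cluster of overly close atoms or as an atom of overly small weight, contradicting (2); making this bookkeeping uniform in $t$ is the technical heart of the argument.
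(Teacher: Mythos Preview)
Your outline matches the paper's proof closely: the phase function, the identity $\int_{t_n}^{t_{n+1}}\vartheta'=2\pi$, the Clark--Parseval formula
\[
\vartheta'(t)=\tfrac12\,|1-\overline{\Theta(t)}\alpha|^2\sum_n\frac{\nu_n}{(t_n-t)^2},
\]
and a near/far splitting of the sum are all exactly what the paper does. The $(1)\Rightarrow(2)$ part is fine.

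There is, however, a real (if easily repaired) circularity in your $(2)\Rightarrow(1)$ argument at the point where you estimate the nearest terms. You assert $\sin^2\tfrac{\vartheta(t)-\arg\alpha}{2}\lesssim(\vartheta'(t_m)(t-t_m))^2$ ``near $t_m$''. Taken literally this requires $\vartheta(t)-\vartheta(t_m)\lesssim\vartheta'(t_m)\,|t-t_m|$, i.e.\ that the \emph{average} of $\vartheta'$ over $[t_m,t]$ is controlled by its value at the endpoint; but a uniform bound on $\vartheta'$ is precisely the conclusion you are after, so this step bootstraps on itself. The paper avoids the circle by writing the same prefactor in its Herglotz form,
\[
|1-\overline{\Theta(t)}\alpha|^2=4\,\Big|\,q+\sum_n\nu_n\Big(\tfrac{1}{t_n-t}-\tfrac{t_n}{t_n^2+1}\Big)\Big|^{-2},
\]
and then bounding the Herglotz sum from \emph{below} near $t_m$ by its dominant term $\nu_m/(t_m-t)$, with the remainder controlled by the separation hypothesis and the finiteness of $\sum_n\nu_n/(1+t_n^2)$. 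This gives $|1-\overline{\Theta(t)}\alpha|^2\lesssim (t-t_m)^2/\nu_m^2=(\vartheta'(t_m)(t-t_m))^2/4$ directly, with an explicit polynomial neighbourhood $|t-t_m|\le c\,|t_m|^{-M'}$ of validity, and then handles the complementary range $|t-t_m|\ge c\,|t_m|^{-M'}$ by dropping the prefactor altogether (since now $\nu_m/(t_m-t)^2$ is already polynomially bounded). Once you substitute this Herglotz lower bound for your Taylor-type step, the rest of your counting argument for the far terms goes through.
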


\begin{proof}[Proof]
The implication (i)$\Longrightarrow$(ii) is obvious. Indeed,
$\nu_n = \pi/|\Theta'(t_n)|$. If we denote by $\psi$ a continuous
increasing branch of the argument of $\Theta$ on $\RR$, then
$\Theta(t) =\exp(i\psi(t))$
and $|\Theta'(t)| = \psi'(t)$, $t \in  \RR$.
We have
$$
\psi(t_{n+1}) - \psi(t_n) = \int_{t_n}^{t_{n+1}} \psi'(t) dt = 2 \,\pi,
$$
whence $t_{n+1} -t_n \ge 2 \pi C^{-1} (|t_n|+1)^{-N}$.
Thus, (ii) holds with $M=N$.

(ii)$\Longrightarrow$(i)
Without loss of generality we assume that $\nu = \sigma_{-1}$. Then, by
(\ref{clark-meas}),
$$
|\Theta'(t)| = \bigg|
i +\sum_n \nu_n \bigg(\frac{1}{t_n -t} -\frac{1}{t_n}\bigg)
\bigg|^{-2} \sum_n \frac{2 \nu_n}{(t_n -t)^2}.
$$
Now the required estimate follows from the fact that
$|\Theta'(t)| \asymp \nu_k^{-1}$ when $|t-t_k|< t_k^{-M}$
with sufficiently large $M$, while for $t$ such that
${\rm dist}\, (t,\{t_n\}) > |t|^{-M}$ we use the rough estimate
$|\Theta'(t)| \lesssim \sum_n |t-t_n|^{-2} \nu_n$
and the fact that $\sum_n (t_n^2+1)^{-1}\nu_n < \infty$.
\end{proof}

\begin{proof}[Proof of Theorem \ref{sp_sint}]
We have $\LL f_j=\la_j f_j$, where $\{\la_j\}$ are distinct and non-zero.
There is a family of eigenvectors $g_j$ of $\LL^*$ with
$\LL^* g_j=\bar \la_j g_j$, which form a biorthogonal family to $\{f_j\}_{j\in J}$.
Let $\M$ be an invariant subspace of $\LL$.
Put $J_1 = \{j: f_j\in \mathcal{M}\}$ and $J_2=J \setminus J_1$. Assume that
$h \in \M$ and $h$ is orthogonal to all $f_j$ with
$j\in J_1$
(that is, $h$ is orthogonal to $\mathcal{E}(\M)$).
The proof of~\cite[Lemma~4.2]{markus70} shows that
whenever $f_j\notin \mathcal{M}$, the corresponding eigenvalue $\lambda_j$
does not belong to $\sigma(\LL |_\mathcal{M})$
and $h$ is orthogonal to $g_j$. This implies that
$h$ is orthogonal to the system $\{f_j\}_{j\in J_1} \cup
\{g_j\}_{j\in J_2}$. We will show that in our situation,
the orthogonal complement to this system is always finite-dimensional
(for any decomposition $J=J_1\cup J_2$).

Let us pass again to the unbounded inverses
(note that $\ker \LL=\ker \LL^*=0$ since $a,\, b\notin x\Elltwo(\mu)$).
Define $\taa, \tb$ by \eqref{bab1} and let
$\tilde \mu=\sum_n \mu_n\delta_{t_n}$, $t_n = s_n^{-1}$.
Then we have $|\taa_n|^2\mu_n \ge C_2 |t_n|^{N+1}$
and also $|t_{n+1} - t_n|\ge C_1|t_n|^{-N_1+1}$.
Since $\taa, \tb \notin \Elltwo(\tilde \mu)$,
the singular perturbation $\tl  = \tl (\ta , a, b, -1)$ of the operator $\ta$
of multiplication by $x$ in $\Elltwo(\tilde \mu)$ is well defined, and $\LL$ is
unitary equivalent to its algebraic inverse.
Theorem~\ref{frombb} implies that $\tl^*$ is also complete.

Now, let $\Theta$ and $\phi$ correspond to $\tl^* = \tl  (\ta , \tb, \taa, -1)$
by Theorem~\ref{rank-one-model}, that is,
$$
\frac{1-\Theta(z)}{1+\Theta(z)} = \frac{1}{i}
\sum_n |a_n|^2\mu_n \bigg(\frac{1}{t_n -z} -\frac{1}{t_n}\bigg).
$$
We use the fact that, by the hypothesis,  $a_n \ne 0$ for any $n$
and so the functional model  applies to
$\tl (\ta , \tb, \taa, -1)$.
By Lemma~\ref{distan} (applied to $\nu_n = |a_n|^2\mu_n$),
$$
|\Theta'(t)| \le C (|t|+1)^K, \qquad t\in\RR.
$$
for some $C,K >0$ with $K$ depending only on $N$ and $N_1$.

Let $E$ be a function in $HB$ such that $\Theta = E^*/E$.
The eigenfunctions of $\tl$ are given by $\{k_\lambda\}_{\lambda\in Z_\phi} \cup
\{\tilde k_\lambda\}_{\lambda\in Z_{\tilde \phi}}$. This system transforms, after
multiplication by $E$, into the system
$\{K_\lambda\}_{\lambda \in Z_\phi\cup\overline{Z}_{\tilde \phi}}$
of the reproducing kernels of the space $\he$. Thus, our problem
reduces to the following: given a complete and minimal systems of reproducing kernels
$\{K_\lambda\}_{\lambda \in \Lambda}$ in $\he$ with
the biorthogonal system $\{G_\lambda\}_{\lambda \in \Lambda}$,
we need to show that for any partition $\Lambda = \Lambda_1\cup\Lambda_2$, the orthogonal
complement to the system $\{K_\lambda\}_{\lambda\in \Lambda_1} \cup
\{G_\lambda\}_{\lambda\in \Lambda_2}$ is finite-dimensional. For the case when
$|\Theta'(t)| = O(|t|^K)$, $|t|\to\infty$, this statement
was proved in~\cite[Theorem 5.3]{bbb}
(with the upper bound for the dimension depending only on~$K$).
\end{proof}

\begin{remark}
{\rm It follows from the results of Gubreev and Tarasenko
\cite[Theorem 2.5]{Gubr-Tar2010} that if $|\phi|^2$ is a
Muchenhoupt $A_2$-weight on $\mathbb{R}$ and $\alpha(\phi)=
\alpha(\tilde \phi) = 0$ \textup(see the notation in Subsection
\ref{abstr}\textup), then any complete rank one perturbation $\A
+ab^*$ admits the spectral synthesis. Moreover, in the case of the
Paley--Wiener space, Belov and Lyubarskii \cite{bl} constructed a
linear summation method for the Fourier series with respect to the
corresponding family of reproducing kernels.}
\end{remark}

\begin{theorem}
\label{newt} Let $\A$ be a compact selfadjoint operator with
simple spectrum $\{s_n\}$, $s_n \ne 0$. Let $\mu = \sum_n \mu_n
\delta_{s_n}$. Suppose that $a\in \Elltwo(\mu)$, but $a\notin
x\Elltwo(\mu)$, and that $a$ is a cyclic vector for $\A$. Then the
following statements are equivalent:
\smallskip

$(i)$ For any $b\in \Elltwo(\mu)$ such that
$\LL=\A + ab^*$ is complete, $\LL$ admits the spectral
\smallskip
synthesis\textup;

$(ii)$ we have
\beqn
\label{gafa} \sum_{k: \; |s_k|>|s_n|}
\frac{|a_k|^2\mu_k}{s_k^2} \lesssim
\frac{|a_n|^2\mu_n}{s_n^2}, \qquad \sum_{k: \; |s_k|\le
|s_n|} |a_k|^2\mu_k  \lesssim |a_n|^2 \mu_n.
\neqn
\end{theorem}

Notice that conditions \eqref{gafa}  imply that
there are some constants $C>0$ and $\ga\in (0,1)$ such that
$|a_n|^2\mu_n\le C \ga^{|n|}$, while $s_n^{-2} |a_n|^2\mu_n \ge C^{-1} \ga^{-|n|}$.
Hence, in particular, $s_n$ tends to $0$ exponentially fast as $|n|\to \infty$.

\begin{proof}
Given a rank one perturbation $\LL = \A+ab^*$
which is complete, define $\taa$ and $\tb$
by \eqref{bab1} and consider the singular rank one perturbation
$\tl = \tl(\ta, \taa, \tb, -1)$ of $\ta$ (where $\ta$ is the multiplication by $x$
in $\Elltwo(\tilde \mu)$) and its adjoint $\tl^* =
\tl(\ta, \tb, \taa, -1)$.
Note that $\ker \LL =0$
since $a\notin x \Elltwo(\mu)$, whereas
the completeness of $\LL$ implies that $\ker \LL^* =0$.
Hence, $\tl$ and $\tl^*$ are well defined.

Thus, the operator $\LL$ admits the spectral synthesis
for any $b$ such that $\LL$ is complete if and only if
each perturbation $\tl$ of $\ta$
(or its adjoint $\tl^*$)
admits the spectral synthesis
whenever $\tl$ is complete. Recall that the spectral synthesis property
holds or not simultaneously for $\tl$ and its adjoint $\tl^*$.

Let $\Theta$ and $\phi$ be the corresponding
functional parameters of the model for $\tl^*$
(we use the fact that $a_n \ne 0$) and let $E\in HB$ be such that
$\Theta = E^*/E$. Then we have, for some $q\in \mathbb{R}$,
\beqn
\label{gafa1}
\frac{1-\Theta(z)}{1+\Theta(z)}=
iq + \frac{1}{i} \sum \Bigl( \frac 1
{t_n-z} - \frac{1}{t_n}\Bigr) |\taa_n|^2\mu_n.
\neqn
Note that $\Theta$ depends only on $a_n$, but not on $b_n$.
The eigenfunctions of $\tl^*$ are of the form
$\{\frac{\phi}{z-\lambda}\}_{\lambda\in \Lambda}$
while its biorthogonal (eigenfunctions of $\tl$) is given, up to normalization, by
$\{k_\lambda\}_{\lambda\in \Lambda}$. Multiplying these systems by
$E$, we obtain biorthogonal systems $\{K_\lambda\}_{\lambda\in \Lambda}$
and $\big\{ \frac{G(z)}{G'(\lambda)(z-\lambda)}\big\}_{\lambda\in \Lambda}$
in the de Branges space $\he$. We assume here that the spectrum of $\LL$
(equivalently, of $\tl$) is simple; the case of higher order root vectors
can be treated analogously.

Recall that by Theorem \ref{mincomp}, any generating function of a complete
and minimal system of reproducing kernels in $K_\Theta$ may be realized
as the function $\phi$ corresponding to some $\tilde b$.
Thus, the problem reduces to the following:
given a de Branges space $\he$ where $\Theta = E^*/E$ is given by \eqref{gafa1}
(with $\sum_n |\taa_n|^2\mu_n =\infty$), when
is it true that {\it any complete system of reproducing kernels
$\{K_\lambda\}_{\lambda\in \Lambda}$ in $\he$
is hereditarily complete}? This question was answered in \cite[Theorem 1.1]{bbb1}: this
property holds if and only if
$$
\sum_{|t_k|<|t_n|} |\taa_k|^2\mu_k \lesssim |\taa_n|^2\mu_n,
\qquad
\sum_{|t_k|\ge |t_n|} \frac{|\taa_k|^2\mu_k|}{t_k^2}  \lesssim
\frac{|\taa_n|^2\mu_n|}{t_n^2}.
$$
Clearly, these inequalities coincide with \eqref{gafa} since
$s_n = t_n^{-1}$ and $\taa_n = a_n/s_n$.
\end{proof}


\subsection{Sharpness of Macaev's theorem}
In this subsection we prove Theorem~\ref{sharp} about
existence of Volterra rank one perturbations.

\begin{proof}[Proof of Theorem~\ref{sharp}]
As above, we pass from the bounded rank one perturbation
$\LL = \A - ab^*$ to the equivalent problem for a singular rank one
perturbation $\tl =\tl(\ta, \taa, \tb, 1)$ of the unbounded
multiplication operator $\ta$
on $\Elltwo(\tilde \mu)$, where $\tilde \mu=\sum_n \mu_n\delta_{t_n}$,
$t_n =s_n^{-1}$, and $\taa$ and $\tb$ are related to $a$ and $b$
by \eqref{bab1}. Thus we need to find $t_n$ with $|t_n| \to \infty$ and
$\tilde \mu$ such that for any
$\alpha_1, \alpha_2 \ge 0$ with $\alpha_1+ \alpha_2 <1 $
there exist $\taa_n, \tb_n$ and $\deab\in \RR$ such that
the data $(\taa, \tb, \deab)$ satisfy conditions $(\admone)$, $(\admonest)$
and
\beqn
\label{shar0}
\begin{aligned}
\sum_n |\taa_n|^2 |t_n|^{2\alpha_1-2}\mu_n & =
\sum_n |a_n|^2 |s_n|^{-2\alpha_1} \mu_n
<\infty, \\
\sum_n |\tb_n|^2 |t_n|^{2\alpha_2-2}\mu_n & =
\sum_n |b_n|^2 |s_n|^{-2\alpha_2} \mu_n <\infty,
\end{aligned}
\neqn
and the function
$$
\phi(z) =
\frac{1+ \Theta(z)}{2} \cdot
\bigg(1 + \sum_n \Big(\frac{1}{t_n-z} -\frac{1}{t_n}\Big)
\taa_n \overline{\tb_n} \mu_n \bigg)
$$
has no zeros in $\BC$.
Here $\Theta$ is the meromorphic function defined
by the formula
\beqn
\label{th8}
\Theta(z) = \frac{\rho(z)-i}{\rho(z)+i},
\qquad
\rho(z)  = \sum_n  \Big(\frac{1}{t_n-z} -\frac{1}{t_n}\Big)
|\tb_n|^2\mu_n.
\neqn

Assume for the moment that such $\{t_n\}$, $\taa$ and $\tb$ are constructed.
Then $\sigma(\tl) = Z_\phi \cup \overline{Z}_{\tilde \phi} = \emptyset$.
Applying Proposition~\ref{L inverse}, we get
a bounded rank one perturbation $\LL= \A -ab^*$ of $\A$ such that
$\ker \LL= \ker \LL^*=0$. Since $\LL^{-1}$ is unitary equivalent to
$\tl$ we conclude that $\LL$ is a Volterra operator.

To construct $\{t_n\}$, $\taa$, $\tb$, take the entire function
$\F(z)= \cos\big(\pi \,\root \of z\big)$.
Its zeros are $t_n=(n-\frac 12)^2$, $n=1,2,\dots$.
It is easy to see that
\beqn
\label{ML frac 1F}
\nuu(z)\defin\frac 1 {\F(z)} =
1 + \sum_{n\in\mathbb{N}}
\Big(\frac{1}{t_n-z} -\frac{1}{t_n}\Big) c_n \mu_n,
\neqn
where $\mu_n=1$,
$$
c_n=
- \frac 1 {\F'(t_n)} = \frac 2\pi (-1)^{n+1} \, \Big(n-\frac 12\Big).
$$
Let $\alpha_1+\alpha_2 = 1-\eps$, where $\eps>0$.
We put $\taa_n=n^{2-2\alpha_1 -\frac 12-\eps}$ and
$$
\tb_n=\frac {c_n}{\taa_n},
\qquad
|\tb_n|\asymp \frac n{n^{2-2\alpha_1 -\frac 12-\eps}}=n^{2-2\alpha_2
-\frac 12-\eps}.
$$
Hence $\taa_n$, $\tb_n$ satisfy \eqref{shar0}.
At the same time, $\taa, \tb \notin \Elltwo(\tilde \mu)$,
and so the data $(\taa, \tb, 1)$ satisfy
$(\admone)$ and $(\admonest)$.
Define $\Theta$ by \eqref{th8} and put $\phi = (1+\Theta)\beta/2$.
Since $c_n \in {\RR}$, we have $\phi/\bar\phi = \Theta$.
By Theorem~\ref{rank-one-model}, $\Theta$ and $\phi$ correspond
to the real type singular perturbation $\tl(\ta, \taa, \tb, 1)$
with the smoothness properties (\ref{shar0}). Also, we have
$$
\phi = \frac{1+\Theta}{2\F}.
$$
Since the zeros $t_n$ of $1 + \Theta$ are exactly the
poles of $1/\F$, $\phi$ has no zeros in $\clos{\BC}^+$.
Hence the spectrum of the singular
perturbation $\tl (\ta, \taa, \tb, 1)$ is empty.

Note that $\A$ is a positive operator with eigenvalues $s_n =
(n-\frac 12)^{-2}$, and so it belongs to $\mathfrak{S}_p$ for any $p>1/2$.
\end{proof}
\bigskip


\section*{Appendix 1: A brief survey of the completeness results}
It should be noted
that the literature on completeness of linear operators is
very extensive, so here we will give only its very brief overview.

There are several abstract result on completeness we do not
mention here, see Dunford and Schwartz' book~\cite{Dunf_Schwarz},
Part 2,  Ch. XI and Part 3, Ch. XIX, \S 6. In particular, in
Theorem~XI.9.29, they give a result close to the Keldy\v{s} and
Macaev's theorems, with no assumption on the triviality of the
kernel. We also refer to the
books~\cite{Gohb_Krein},~\cite{Gohb_Kr_Volterr} by Gohberg and
Krein, in particular, for treatments of the dissipative case and
for theorems on the relationship between the sizes of the real and
the imaginary part of a compact operator. In the work by G.
Gubreev and A. Jerbashian~\cite{Gubreev_Jerb1991}, a completeness
result, generalizing the Keldy\v s theorem, is given by applying
M. Jerbashian's classes and their factorization theory.
See also
~\cite{Bask},~\cite{Bask_Kats} for an abstract result on similarity
of a perturbed operator to a generalized spectral operator and for an application to
integro-differential operators with nonlocal boundary conditions. In the book
~\cite{Aziz_Iokh_book}, classical results for spaces with indefinite metric are presented.
We also mention a more recent monograph~\cite{markus-book} by Markus, where also different approaches to the completeness
properties of operator pencils are treated with detail.
%
%
%
We remark that abstract Banach space results on completeness are also known,
see, for instance, \cite{Markus1966},~\cite{Burgoyne} and~\cite{Zhang2001}.

Much more is known for nonselfadjoint operators corresponding to
boundary value problems for ordinary differential equations or
systems. The literature devoted to this field is very extensive.
We will mention here the works by Malamud (2008),
Shkalikov (1976, 1979, 1982) and Malamud and Oridoroga
~\cite{MalamOrid2012}  (2012) (see the references in
~\cite{MalamOrid2012}, where an up-to-date account of this work is
given), and also the works by Minkin~\cite{Minkin_res} and by
Shubov~\cite{Shubov2011}, Freiling, Rykhlov, Yurko
\cite{FrRykhYur02}, \cite{Rykhlov2009}. Reviews~\cite{Radz82}
and~\cite{RosSolSh89} contain a systematic exposition of this
field.
%
%
In~\cite[\S 6]{Agranovich}, abstract results on completeness
and their relation to partial differential
and pseudodifferential operators on closed manifolds
are reviewed.

In some cases when the completeness holds, the Abel summability
property for eigenfunction can also be proved,
see~\cite{Lidsk62},~\cite{Mats64},~\cite{KostyuShka78},~\cite{FreTroo98},~\cite{Yakubov},~\cite{Boim}
and the review \cite{Agranovich}, \S6.4.

The completeness of finite-dimensional perturbations of Volterra
integral operators and its relationship with
expansions by generalized eigenfunctions of ordinary differential operators
has been studied by Khromov~\cite{Khromov}.

Even stronger property of eigenvectors and generalized eigenvectors
is to form a Riesz basis or
a Riesz basis with parenthesis. There are hundreds of works dedicated
to different aspects of this property.
In relation with differential operators, these properties are discussed
in the above-mentioned reviews, the book~\cite{markus-book} and recent works
\cite{Dzhn94}, \cite{Add-Mityag}, \cite{Makin},
\cite{Shk_2010}, \cite{GesztTk2012},
\cite{LunMal15} and \cite{LunMal16}.
Wyss gives in~\cite{Wyss} both abstract results and
applications to block operator matrices
and to differential operators. Notice that the similarity to a
normal operator with a discrete spectrum is equivalent to the
property of eigenvectors to form a Riesz basis. There are
several papers exploiting the model approach, among which we can
cite~\cite{Kap2},~\cite{Nikolsk-Treil02} and~\cite{VasyuKup}. In
papers~\cite{Veselov},~\cite{Veselov2}, certain criteria for the
completeness and Riesz basis properties of eigenvectors are
obtained in the context of the Naboko's model of nondissipative
nonselfadjoint operators.

Recently, the existence of invariant subspaces and other spectral
properties of finite rank perturbations of diagonalizable normal
operators have been studied in~\cite{Ionascu},~\cite{FangXia2012}
and~\cite{FoiasJungPearcy}. We remark that the similarity of a
compact perturbation of a normal operator with no eigenvalues to
an unperturbed one has also been studied, see~\cite{Y} and
references therein.
\medskip

\textbf{Malamud's example.}
It was pointed out to us by Mark Malamud that, combining
the results of \cite{MalamOrid2012} and \cite{LunMal15},
one can give a simple example of a non-dissipative
Dirac operator $L$ on a finite interval like in Theorem~\ref{adjoint0}, which
is a rank one singular perturbation of a normal operator and is
complete, whereas the adjoint operator $L^*$ is incomplete
and the span of its root vectors is of infinite codimension.
Here we reproduce his example, with his kind permission.

Let $b_1, b_2 \in \BC \setminus\{0\}$,
\[
B= \begin{pmatrix}
b_1^{-1} & 0 \\
0 & b_2^{-1} \\
\end{pmatrix}
\ne B^* \quad \text{and} \quad
Q=
\begin{pmatrix}
Q_{11} & Q_{12} \\
Q_{21} & Q_{22} \\
\end{pmatrix} \in \Elltwo([0,1]; \BC^2).
\]
Consider the associated Dirac operator on $L^2([0,1]; \BC^2)$:
\beqn
\label{Dirac}
Ly=L_{U} y =-iBy'+Q(x)y,
\neqn
whose domain is the set of functions $y(x)$ in the Sobolev space
$W^{1,2} ([0,1]; \BC^2)$
satisfying the boundary conditions
\beqn
\label{Uj}
U_1(y)\defin y_1(0)- h y_2(0)=0, \quad
U_2(y)\defin y_1(1)+ h y_2(0)=0,
\neqn
where $h\in \BC \setminus\{0\}$.
By \cite[Theorem 6.1]{MalamOrid2012}, the system of root functions of
$L_{U}$ is complete and minimal whenever $b_1b_2^{-1}\notin \BR$.

Consider also the
Dirac operator $L_V$ on $L^2([0,1]; \BC^2)$, defined by
the same differential expression as in~\eqref{Dirac}, together
with the antiperiodic boundary conditions
\[
\label{Vj}
V_1(y)\defin y_1(0) + y_1(1)=0, \quad
V_2(y)\defin y_2(0) + y_2(1)=0.
\]

From now on, let us restrict ourselves to the case when $Q\equiv 0$. Then $L_V$
is a direct sum $A_1\oplus A_2$ of two first order differential
operators on $L^2([0,1]$ such that $b_1A_1$ and $b_2A_2$ are
selfadjoint. Therefore $L_V$ is an unbounded normal operator. A
simple calculation gives
\beqn
\label{inv_L_u}
L_U^{-1} f=
\begin{pmatrix}
y_1(x) \\
y_2(x)
\end{pmatrix}
=
\begin{pmatrix}
ib_1 \big[
\int_0^x f_1(t)\, dt - \frac 12 \int_0^1 f_1(t) \big]  \\
ib_2\int_0^x f_2(t)\, dt - \frac {ib_1}{2h} \int_0^1 f_1(t)
\end{pmatrix}\, .
\neqn
Put $e_1= \bf{1} \oplus \bf{0}$, $e_2= \bf{0} \oplus \bf{1}$,
$e_1, e_2\in L^2([0,1]; \BC^2)$.
By applying \eqref{inv_L_u} and a similar expression for
$L_V^{-1} f$, one gets
\[
(L_U^{-1}- L_V^{-1})f=
\frac i 2 \,\langle f,  \bar b_1\bar h^{-1} e_1 - \bar b_2 e_2\rangle \, e_1.
\]
Therefore $L_U^{-1}- L_V^{-1}$ has rank one, in other words,
$L_U$ is a complete rank one singular perturbation of a
normal operator $L_V$.

On the other hand, the adjoint operator $L_U^*$ is given by the differential expression
$L^* = - i B^* \frac d {dx}$ and the boundary conditions
\[
\bar h y_1(0)+
{\bar b_1}{\bar b_2}^{-1} y_2(0) + \bar h y_1(1) = 0, \quad
y_2(1)=0.
\]
Clearly all its eigenvectors satisfy $y_2(x)\equiv 0$, so that
$L_U^*$ is not complete, with infinite defect.

Note that it is essential for completeness of $L_U$
that $b_1b_2^{-1}\notin \BR$, and so the normal operator
$L_V$ is nonselfadjoint. Its spectrum is given by
$\{b_1^{-1}(\pi+2\pi k)\}_{k\in\mathbb{Z}}\cup
\{b_2^{-1}(\pi+2\pi k)\}_{k\in\mathbb{Z}}$, while the spectrum of $L_U$
coincides with its ''half''
$\{b_1^{-1}(\pi+2\pi k)\}_{k\in\mathbb{Z}}$.

\renewcommand{\theequation}{A.\arabic{equation}}
\setcounter{equation}{0}  

\bigskip

\section*{Appendix 2: Proofs of Propositions \ref{Pr1add}
and \ref{Pr-adjoints-n}
\\ on singular balanced rank $n$ perturbations}
\label{s_Appendix}

\begin{proof}[Proof of Proposition~\ref{Pr1add}]
First we prove assertion (2). To this end, assume that $\LL $ is a
balanced rank $n$ singular perturbation of $\A $. We divide our
argument into several
\medskip
steps.

(1) Consider the linear manifold $N$, which is the
projection of $G(\A )\cap G(\LL )$ onto the first component of $H\oplus
H$. Then
$$
N=\{x\in \cDA\cap \cDL: \quad \A x=\LL x\}
$$
and \beqn \label{dim nn} \dim \cDA/N=\dim \cDL/N=n. \neqn
Put $\A^{-1}\LL \defin I+K$, so that $K:\cDL\to H$.
One has
$$
y\in N  \;\Longleftrightarrow\; \big( y\in\cDL \, \;\&\; \,
\A^{-1}\LL y=y \big) \;\Longleftrightarrow\; \big(y\in\cDL  \;\&\; Ky=0\big).
$$
By \eqref{dim nn},  $\rank K=n$.

Next, there exists a factorization
$$
K=\tilde a \wt K,
$$
where \beqn \label{tilde a and K} \wt K:\cDL\to \BC^n, \enspace
\Ran \wt K=\BC^n; \qquad \tilde a: \BC^n\to H,  \enspace \ker
\tilde a=0. \neqn Any $y\in \cDL$ can be represented as \beqn
\label{repr y} y=\A^{-1}\LL y-Ky=y_0+\tilde ac, \quad
\text{where}\enspace y_0\defin \A^{-1}\LL y \in \cDA, \; c\defin -\wt
Ky\in \BC^n. \neqn In particular,
$$
\cDL\subseteq \cDA+\Ran \wt a.
$$
We put
$$
a=\A \tilde a: \BC^n\to \A H.
$$
\medskip

(2) Consider the linear manifold
$$
\big\{ (c,y_0)\in \BC^n \oplus \cDA: \quad \wt ac+y_0\in \cDL,
\enspace \LL \big(\tilde a c+y_0\big)=\A y_0 \big\}.
$$
Since this linear manifold has finite codimension in $\BC^n \oplus
\cDA$, it has a form
$$
\big\{ (c,y_0)\in \BC^n \oplus \cDA: \deab c+b^*y=0 \big\}
$$
for some $b^*:\cDA\to \BC^m$ and some complex matrix $\deab \in
\BC^{m\times n}$ (here $m\ge0$ is an integer). So, for any pair
$(c,y_0)\in \BC^n \oplus \cDA$, one has
\beqn
\label{equiv}
\Big[
\wt ac+y_0\in \cDL \enspace \& \enspace \LL \big(\tilde a c+y_0\big)=\A y_0
\Big]
\;\Longleftrightarrow\; \deab c+b^*y_0=0.
\neqn
By putting here
$c=0$, we get that for $y_0\in \cDA$, the condition $y_0\in N$ is
equivalent to the condition $b^*y_0=0$. Hence \beqn \label{rank b
n} \rank b^*=n.
\neqn
\smallskip

(3) Take any $y\in \cDL$, and define its decomposition
$y=y_0+\tilde ac$ as in \eqref{repr y}. For these $y,y_0$ and $c$,
both conditions of the left-hand side of the equivalence
\eqref{equiv} hold. By \eqref{repr y}, any $c\in \BC^n$ can appear
in this way. Hence $\Ran\deab \subseteq \Ran b^*$.

By \eqref{rank b n}, there is a matrix $\Phi\in \BC^{n\times m}$
such that $\Phi|_{\Ran b^*}$ is an isomorphism. Then the condition
$\deab c+b^*y=0$ is equivalent to $\Phi\deab c+\Phi b^*y=0$,
and we can replace the pair $(\deab ,b)$ with $(\Phi\deab ,
\Phi b)$. In other words, it can be assumed that $m=n$, so that
$\deab \in \BC^{n\times n}$. We assert that $\LL =\LL (\A ,a,b,\deab )$.

Let us check first that $\LL (\A ,a,b,\deab )$ is correctly defined and
$\LL (\A ,a,b,\deab )\subseteq \LL $. To this end, take any
$y=y_0+\A^{-1}ac\in \cD(\LL (\A ,a,b,\deab ))$. Then $\deab c+b^*y_0=0$. By \eqref{equiv}, $y\in \cDL$, and $y_0=\A^{-1}\LL y$.
This shows that $y$ determines $y_0$ uniquely (and therefore
 $(\admn)$  holds), and that $\LL (\A ,a,b,\deab )\subseteq \LL $.

Now we check that $\LL \subseteq \LL (\A ,a,b,\deab )$. Take any $y\in
\cDL$. By
\eqref{equiv}, $c\in \BC^n$, and that $\LL y=\A y_0$. By \eqref{equiv},
$y\in \cD\big(\LL (\A ,a,b,\deab )\big)$ and $\LL (\A ,a,b,\deab ) y=\LL y$.
\medskip

We leave the proof of assertion (1) to the reader. It follows
essentially the same type of arguments.
\end{proof}


%
%

\begin{proof}[Proof of Proposition~\ref{Pr-adjoints-n}]
(1)\,
We conserve the notation of the previous proof. It was shown there that the space
$$
N = \big\{ y_0\in \cDA: b^*y_0=0 \big\}
$$
has codimension $n$ in $\cDA$. If we understand $b$ as an operator
$b:\cDA\to \BC^n$, then
$$
\big\{ f\in \A H: f\perp N \big\} =\Ran b.
$$
It follows that $(\clos_H N)^\perp =(\Ran b)\cap H$.

Since $N\subset \cDL$, it follows that any vector $v\in H$ which
is orthogonal to $\clos\cDL$, should have the form $v=bd$,
some $d\in \BC^n$  such that $v\in H$.
So $\LL $ is densely defined if and only if
the only vector of the form $v=bd\in H$,
$d\in \BC^n$ which is orthogonal to $\cDL$ corresponds to $d=0$.

For any $v=bd$ as above, orthogonal to
$\cDL$, and any $y=y_0+\A^{-1}ac\in \cDL$
$$
0=\langle y, v \rangle
=
\langle b^*y_0, d \rangle +
  \langle c, a^*(\A^{*-1}bd) \rangle
=
\langle c, -\deab^* d+a^*(\A^{*-1}bd) \rangle.
$$
It follows
from the proof of Proposition~\ref{Pr1add} that here $c\in \BC^n$ can be arbitrary. Therefore
$\LL $ is densely defined if and only if $(\admnst)$  holds.


(2)\,
Now assume that both conditions  $(\admn)$ ,  $(\admnst)$  hold. Then both operators
$\LL^*$ and $\LL_*\defin \LL (\A^*, b,a,\deab^*)$ are well-defined.
We have show that $\LL_*=\LL^*$. Notice first that
\beqn
\begin{aligned}
\label{2app-st}
\cD(\LL_*)&\defin \big\{
u=u_0+\A^{*-1}bd: \\
& \qquad \qquad d\in \BC^n,\, u_0\in \cD(\A^*),\, \deab^* d+a^*u_0=0
\big\};                     \\
\LL_* u& \defin \A^*u_0, \quad y\in \cDL.
\end{aligned}
\neqn
Now take any pair of vectors $y=y_0+\A^{-1}ac\in \cDL$, $u=u_0+\A^{*-1}bd\in \cDLst$.
Then $\LL y=\A y_0$, $\LL_*y=\A u_0$.
By \eqref{2app}, \eqref{2app-st},
$$
\langle \LL y, u \rangle
- \langle \A y_0, u_0 \rangle
=\langle \A y_0, \A^{*-1}bd \rangle
=\langle b^*y_0, d \rangle
=- \langle \deab c, d \rangle.
$$
Similarly,
$$
\langle y, \LL_* u \rangle
- \langle y_0, \A^* u_0 \rangle
=- \langle \deab c, d \rangle,
$$
so that
$\langle \LL y, u \rangle = \langle y, \LL_* u \rangle$ for all
vectors $y\in \cDL$, $u\in \cDLst$. Hence $\LL_*\subset \LL^*$.
It is easy to see that if $R,S$ are subspaces of $H$ and
$$
\dim (R\cap S)/S= \dim (R\cap S)/R=n,
$$
then the same relations hold for the orthogonal complements
$R^\perp$, $S^\perp$. Denote by $\Gr(\LL )$ the graph of $\LL $ and
introduce a unitary operator $W$ on $H\oplus H$ by the formula $W(x,y)=(-y,x)$.
It is well-known that  $\big(W\Gr(\LL )\big)^\perp=\Gr(\LL^*)$.
It follows that both $\LL_*$ and $\LL^*$ are rank $n$ balanced
perturbations of $\A^*$. Since
$\Gr(\LL_*)\subset \Gr(\LL^*)$, it follows that
$\LL_* = \LL^*$.
\end{proof}



\begin{thebibliography}{75}


\bibitem{Add-Mityag}
J. Adduci, B. Mityagin,
Eigensystem of an $\Elltwo$-perturbed harmonic oscillator is an unconditional basis,
\textit{Cent. Eur. J. Math.} 10 (2012), 2, 569--589.


\bibitem{Agranovich}   
M.S. Agranovich,
Elliptic operators on closed manifolds,
In: Partial Differential Equations, VI, Encyclopedia of Mathematical Sciences,
vol. 63, Springer, Berlin, 1994, 1--125.


\bibitem{ahe-cl70} P.R. Ahern, D.N. Clark, Radial limits and          
invariant subspaces, {\it Amer. J. Math.} 92 (1970),
332--342.


\bibitem{Alb_Fei_Kur}                                             
S. Albeverio, Sh.-M. Fei, P. Kurasov, Point interactions:
PT-Hermiticity and reality of the spectrum,
{\it Lett. Math. Phys.} 59 (2002), 3, 227--242.


\bibitem{Albev-Kur99}            
S. Albeverio, P. Kurasov, Finite rank perturbations and distribution theory.
\textit{Proc. Amer. Math. Soc.} 127 (1999), 4, 1151--1161.


\bibitem{Aziz_Behr_etl}                                           
T.Ya. Azizov, J. Behrndt, P. Jonas, C. Trunk,
Compact and finite rank perturbations of closed linear operators and relations in {H}ilbert spaces,
\textit{Integr. Equat. Oper. Theory}, 63 (2008), 151--163.


\bibitem{Aziz_Iokh_book}           
T.Ya. Azizov, I.S. Iokhvidov,
{\it Linear operators in spaces with an indefinite metric},
John Wiley \& Sons, Ltd., Chichester, 1989.
%


\bibitem{bar-belov} A. Baranov, Yu. Belov,                                           
Systems of reproducing kernels and their biorthogonal:
completeness or incompleteness?
{\it Int. Math. Res. Notices}
(2011), Vol. 2011, 22, 5076--5108.

\bibitem{bbb} A. Baranov, Y. Belov, A. Borichev,                                       
Hereditary completeness for systems of exponentials and reproducing kernels,
{\it Adv. Math.} 235 (2013), 525--554.

\bibitem{bbb1} A. Baranov, Y. Belov, A. Borichev,                                       
Spectral synthesis in de Branges spaces,
{\it Geom. Funct. Anal. \textup(GAFA\textup)}
25 (2015), 2, 417--452.

\bibitem{bbby} A. Baranov, Yu. Belov, A. Borichev, D. Yakubovich,
Recent developments in spectral synthesis for exponential systems
and for non-self-adjoint operators,
Recent Trends in Analysis, Proceedings of the conference in honor of
Nikolai Nikolski, Theta Foundation, Bucharest, 2013, pp. 17--34.

\bibitem{by}
A. Baranov, D. Yakubovich, One-dimensional perturbations of
unbounded selfadjoint operators with empty spectrum, \textit{J.
Math. Anal. Appl.} 424 (2015), 2, 1404--1424.




\bibitem{Bask}                
A.G. Baskakov,
Spectral analysis with respect to finite-dimensional perturbations of spectral operators,
\textit{Izv. Vyssh. Uchebn. Zaved. Mat.} 1 (1991), 3--11.


\bibitem{Bask_Kats}          
A.G. Baskakov, T.K. Katsaran,
Spectral analysis of integro-differential operators with nonlocal boundary conditions,
\textit{Differentsial'nye Uravneniya} 24 (1988), 8, 1424--1433;
English transl.:
\textit{Differ. Equations} 24 (1988), 8, 934--941.

\bibitem{bl}
Yu. Belov, Yu. Lyubarskii, On summation of nonharmonic Fourier series,
\textit{Constr. Approx.} 43 (2016), 2, 291--309.

\bibitem{Bender}                                        
C.M. Bender, Making sense of non-Hermitian Hamiltonians,
\textit{Rep. Progr. Phys.} 70 (2007), 6, 947--1018.

\bibitem{Boim}
K.Kh. Bo\v\i matov,
On the Abel basis property of a system of root vector-functions of degenerate elliptic differential operators
with singular matrix coefficients. \textit{Sibirsk. Mat. Zh.} 47 (2006), 1, 46--57;
English transl.: \textit{Siberian Math. J.} 47 (2006), 1, 35--44.

\bibitem{br} L. de Branges,                                        
{\it Hilbert Spaces of Entire Functions},
Prentice Hall, Englewood Cliffs (NJ), 1968.


\bibitem{Br_Marl_Nab_W08}                    
M. Brown, M. Marletta, S. Naboko, I. Wood, Boundary triplets and $M$-functions for
non-selfadjoint operators, with applications to elliptic PDEs and block operator matrices,
\textit{J. Lond. Math. Soc.} (2) 77 (2008), 3, 700--718.


\bibitem{Burgoyne}                                        
J. Burgoyne, Denseness of the generalized eigenvectors of a
discrete operator in a Banach space,
\textit{J. Operator Theory} 33 (1995), 279--297.

\bibitem{CimaMathRoss}                                       
J.A. Cima,  A.L. Matheson, W.T. Ross,
{\it The Cauchy Transform},
Mathematical Surveys and Monographs, 125. AMS,
Providence, R.I., 2006.


\bibitem{cl}                                       
D.N. Clark, One-dimensional perturbations of restricted shifts,
\textit{J. Anal. Math.} 25 (1972), 169--191.

\bibitem{DeckFoPea}                                        
D. Deckard, C. Foia\c{s}, C. Pearcy,
Compact operators with root vectors that span,
\textit{Proc. Amer. Math. Soc.} 76, (1979), 1, 101--106.


\bibitem{DerkMal91}                    
V.A. Derkach, M.M. Malamud, Generalized resolvents
and the boundary value problems for Hermitian operators with gaps,
\textit{J. Funct. Anal.} 95 (1991), 1--95.

%
%
%

\bibitem{dns}                                           
L. Dovbysh, N. Nikolski, V. Sudakov,
How good can a nonhereditary family be?
{\it Zap. Nauchn. Semin. LOMI} 73 (1977), 52--69;
English transl.:
{\it J. Soviet Math.} 34 (1986),  6,
2050--2060.

\bibitem{Dunf_Schwarz}                          
N. Dunford, J.T. Schwartz,
{\it Linear Operators, Part 2: Spectral Theory; Part 3: Spectral Operators},
Interscience, New York, 1963 and 1971.

\bibitem{Dzhn94}
L.S. Dzhanlatyan,
On the basis properties of a system of root vectors of an operator
that is close to normal,
\textit{Funktsional. Anal. i Prilozhen.} 28 (1994), 3, 69--73;
English transl.: \textit{Funct. Anal. Appl.} 28 (1994), 3, 204--207.



\bibitem{FangXia2012}                     
Q. Fang, J. Xia,
Invariant subspaces for certain finite-rank perturbations
of diagonal operators,
\textit{J. Funct. Anal.} 263 (2012), 1356--1377.

\bibitem{FoiasJungPearcy}                        
C. Foia\c{s}, I.B. Jung, E. Ko, C. Pearcy,
Spectral decomposability of rank-one perturbations of normal operators,
\textit{J. Math. Anal. Appl.} 375 (2011), 602--609.

\bibitem{FreTroo98}                           
G. Freiling, I.Yu. Trooshin,
Abel-summability of eigenfunction expansions of three-point boundary value problems,
\textit{Math. Nachr.} 190 (1998), 129--148.

\bibitem{FrRykhYur02}                           
G. Freiling, V. Rykhlov, V. Yurko,
Spectral analysis for an indefinite singular Sturm-Liouville problem.
\textit{Appl. Anal.} 81 (2002), 6, 1283--1305.

\bibitem{GesztTk2012}    
F. Gesztesy, V. Tkachenko,
A Schauder and Riesz basis criterion for non-selfadjoint Schr\"odinger operators with
periodic and anti-periodic boundary conditions,
\textit{J. Differential Equations} 253 (2) (2012), 400--437.



\bibitem{Gohb_Krein}                                 
I. Gohberg, M. Krein,
{\it Introduction to the Theory of
Linear Nonselfadjoint Operators}, Amer. Math. Soc., Providence, R.I., 1969.


\bibitem{Gohb_Kr_Volterr}           
I. Gohberg, M. Krein,
{\it Theory and Applications of Volterra Operators
in Hilbert Space}, Amer. Math. Soc., Providence, RI, 1970.


\bibitem{GorbGorb91}                   
V.I. Gorbachuk, M.L. Gorbachuk, \textit{Boundary Value Problems
for Operator Differential Equations}, Kluwer, Dordrecht, 1991.


\bibitem{Gubreev_Jerb1991}                                
G.M. Gubreev, A.M. Jerbashian,
Functions of generalized bounded type in spectral theory of nonweak contractions.
\textit{J. Operator Theory} 26 (1991), 1, 155--190.









\bibitem{Gubr-Tar2010}
G.M. Gubreev, A.A. Tarasenko, Spectral decomposition of model
operators in de Branges spaces,
{\it Mat. Sb.} 201 (2010), 11, 41--76;
English transl.: {\it Sb. Math.} 201 (2010), 11, 1599--1634.







%

\bibitem{hamb}                          
H. Hamburger,
\"Uber die Zerlegung des Hilbertschen Raumes
durch vollstetige lineare Transformationen,
\textit{Math. Nachr.} 4 (1951), 56--69.

\bibitem{hj}                                        
V. Havin, B. J\"oricke,
\textit{The Uncertainty Principle in Harmonic Analysis},
Springer-Verlag, Berlin, 1994.

\bibitem{hm1}                                        
V.P. Havin,  J. Mashreghi,
Admissible majorants for model subspaces of $H^2$. Part I: slow
winding of the generating inner function;
Part II: fast winding of the generating inner function
{\it Can. J. Math.}  55 (2003), 6, 1231--1263; 1264--1301.


\bibitem{ho}                                        
K. Hoffman,
\textit{Banach Spaces of Analytic Functions},
Prentice Hall, Englewood Cliffs, NJ, 1962.

\bibitem{Ionascu}                                     
E. Ionascu, Rank-one perturbations of diagonal operators,
\textit{Integral Equations Oper. Theory} 39 (2001), 4, 421--440.


\bibitem{Kap}                                      
V.V. Kapustin, One-dimensional perturbations of singular unitary operators,
{\it Zapiski Nauchn. Sem. POMI} 232 (1996), 118--122;
English transl.: {\it J. Math. Sci. (New York)} 92 (1998), 1, 3619--3621.

\bibitem{Kap1}                                      
V.V. Kapustin, Operators close to unitary ones and their functional
models. I, {\it Zapiski Nauchn. Sem. POMI} 255 (1996),
82--91; English transl.: {\it J. Math. Sci. (New York)} 107 (1998), 4,
4022--4028.


\bibitem{Kap2}                                      
V.V. Kapustin, Spectral analysis of almost unitary operators,
{\it Algebra i Analis}  13 (2001), 5, 44--68;
English transl.: {\it St. Petersburg Math. J.} 13 (2002), 5, 739--756.


\bibitem{Keldysh}                                     
M.V. Keldy\v s, On the characteristic values and characteristic
functions of certain classes of non-self-adjoint equations,
\textit{Doklady Akad. Nauk SSSR (N.S.)}
77, (1951), 11--14 (in Russian).

\bibitem{Keldysh71}                                     
M.V. Keldy\v s, On the completeness of the eigenfunctions of some
classes of non-selfadjoint linear operators,
{\it Uspekhi Mat. Nauk} 26 (1971), 4, 15--41;
English transl.: Russian Math. Surveys 26 (1971), 4, 15--44.

\bibitem{Khromov}                        
A.P. Khromov, Finite-dimensional perturbations of Volterra operators,
\textit{Sovrem. Mat. Fundam. Napravl.} 10 (2004), 3--163;
English transl.: \textit{J. Math. Sci.} (N.Y.)
138 (2006), 5, 5893--6066.






\bibitem{KisNab06}
A.V. Kiselev, S.N. Naboko, Nonself-adjoint operators with almost Hermitian spectrum: matrix model. I.
\textit{J. Comput. Appl. Math.} 194 (2006), 1, 115--130.

\bibitem{KisNab09}
A.V. Kiselev, S.N. Naboko, Nonself-adjoint operators with almost Hermitian spectrum: Caley identity and some
questions of spectral structure,
\textit{Ark. Mat.} 47 (2009), 1, 91--125.

\bibitem{ko1}
P. Koosis,
\textit{The Logarithmic Integral I}, Cambridge Stud. Adv. Math.
12, 1988.


\bibitem{KostyuShka78}    
A.G. Kostju\v{c}enko, A.A. \v{S}kalikov,
Summability of expansions in eigenfunctions of differential operators and of convolution
operators, \textit{Funktsional. Anal. i Prilozhen.} 12 (1978), 4, 24--40;
English transl.: \textit{Functional Anal. Appl.} 12 (1978), 4, 262--276 (1979).


\bibitem{Kuzhel}                   
A. Kuzhel,
{\it Characteristic Functions and Models of Nonselfadjoint Operators.}
Mathematics and its Applications, 349. Kluwer, Dordrecht, 1996.



\bibitem{levin}                                     
B.Ya. Levin,
\textit{Distribution of Zeros of Entire Functions}, GITTL, Moscow, 1956;
English transl.: Amer. Math. Soc., Providence, 1964; revised edition:
Amer. Math. Soc., 1980.



\bibitem{Lidsk62}                  
V.B. Lidskii, Summability of series in terms of the principal vectors
of non-selfadjoint operators, \textit{Tr. Mosk. Mat. Obs.}, 11, 1962, 3--35.


\bibitem{LunMal14}                      
A. A. Lunyov, M. M. Malamud,
On spectral synthesis for dissipative Dirac type operators.
\textit{Integral Equations Operator Theory} 80 (2014), no. 1, 79--106

\bibitem{LunMal15}                      
A. A. Lunyov, M. M. Malamud, On the completeness and Riesz basis property of root
subspaces of boundary value problems for first
order systems and applications, \textit{J. Spectr. Theory}, 5, (2015), no. 1, 17--70.

\bibitem{LunMal16}                      
A. A. Lunyov, M. M. Malamud,
On the Riesz basis property of root vectors system for $2\times 2$ Dirac type operators.
\textit{J. Math. Anal. Appl.} 441 (2016), no. 1, 57--103.


\bibitem{Lyants-Stor-book}                            
V.\`{E}. Lyantse, O.G. Storozh, Methods of the theory of unbounded operators,
``Naukova Dumka'', Kiev, 1983. 211 pp. (in Russian).

\bibitem{Mats61}                                            
V.I. Macaev,
A class of completely continuous operators,
{\it Dokl. Akad. Nauk SSSR} 139 (1961), 3, 548--551;
English transl.: {\it Soviet Math. Dokl.} 2 (1961), 972--975.


\bibitem{Mats64}
V.I. Macaev, Several theorems on completeness of root
subspaces of completely continuous operators,
\textit{Dokl. Akad. Nauk SSSR} 155 (1964), 273--276;
English transl.: \textit{Soviet Math. Dokl.} 5 (1964), 396--399.


\bibitem{MatsMog71}
V.I. Macaev, E.Z. Mogul'ski,
Certain criteria for the multiple completeness of the system
of eigen- and associated vectors
of polynomial operator pencils,
\textit{Teor. Funkcii, Funkcional. Anal. i Prilozen.} 13 (1971), 3--45.

\bibitem{MatsMogul72}                           
V.I. Macaev, E.Z. Mogul'ski,
The possibility of a weak perturbation of a complete operator to a
Volterra operator, {\it Dokl. Akad. Nauk SSSR} 207 (1972), 534--537;
English transl.: \textit{Soviet Math. Dokl.} 13 (1972), 1565--1568.

\bibitem{MatsMogul76}                              
V.I. Macaev, E.Z. Mogul'ski, The completeness of weak
perturbation of the selfadjoint operators,
{\it Zap. Nauchn. Sem. LOMI} 56 (1976), 90--103;
English transl.: {\it J. Sov. Math.} 14 (1980), 2, 1091--1103.



\bibitem{mak-polt} N. Makarov, A. Poltoratski,                 
Meromorphic inner functions, Toeplitz kernels and the uncertainty principle,
\textit{Perspectives in Analysis, Math. Phys. Stud.} 27, Springer, Berlin, 2005, 185--252.

\bibitem{mak-polt1} N. Makarov, A. Poltoratski,
Beurling--Malliavin theory for Toeplitz kernels, \textit{Invent. Math.}
180 (2010), 3, 443--480.

\bibitem{Makin} %
A.S. Makin,
On spectral expansions corresponding to the nonselfadjoint Sturm-Liouville operator,
\textit{Dokl. Akad. Nauk} 406 (2006), 1, 21--24;
English transl.:
\textit{Doklady Mathematics} 73 (2006), 1, 15--18.






\bibitem{MalamOrid2012}           
M.M. Malamud, L.L. Oridoroga, On the completeness of root subspaces
of boundary value problems for first order systems of ordinary differential equations,
\textit{J. Funct. Anal.} 263 (2012), 7, 1939--1980.

\bibitem{Markus1966}                        
A.S. Markus,
Certain criteria for the completeness of a system of root-vectors of a linear operator in
a Banach space, {\it Mat. Sb.} 70 (112)  (1966), 4, 526--561.

\bibitem{markus70}                            
A.S. Markus,
The problem of spectral synthesis for operators with
point spectrum, {\it Math. USSR-Izv.} 4 (1970), 3, 670--696.


\bibitem{markus-book}                          
A.S. Markus,
\textit{Introduction to the Spectral Theory of Polynomial Operator Pencils.}
AMS Transl. Math. Monographs, 71, 1988.

\bibitem{Martin}                 
R.T.W. Martin, Representation of simple symmetric operators with
deficiency indices $(1,1)$ in de Branges space.
\textit{Complex Anal. Oper. Theory} 5 (2011), 2, 545--577.





\bibitem{Minkin_res}                  
A.M. Minkin,
Resolvent growth and Birkhoff-regularity,
\textit{J. Math. Anal. Appl.} 323 (2006), 387--402.

%


\bibitem{nk12}                                       
N.K. Nikolski,
\textit{Operators, Functions, and Systems: an Easy Reading. Vol. 1-2},
Math. Surveys Monogr., Vol. 92--93, AMS, Providence, RI, 2002.

\bibitem{nk-volterra}                               
N.K. Nikolski, Complete extensions of Volterra operators,
\textit{Izv. Akad. Nauk SSSR} 33 (1969),  6, 1349--1353;
English transl.: \textit{Math. USSR-Izv.} 3 (1969), 6, 1271--1276.

\bibitem{Nikolsk-Treil02}                            
N. Nikolski, S. Treil,
Linear resolvent growth of rank one perturbation
of a unitary operator does not imply its similarity
to a normal operator,
{\it J. Anal. Math.} 87 (2002), 1, 415--431.

\bibitem{polt}                                    
A.G. Poltoratski, Boundary behavior
of pseudocontinuable functions,
{\it Algebra i Analiz} 5
(1993), 2, 189-210; English transl.: {\it St. Petersburg Math. J.}
5 (1994), 2, 389--406.

\bibitem{Posilicano}
A. Posilicano, Self-adjoint extensions of restrictions,
\textit{Oper. Matrices} 2 (2008), 4, 483--506.


\bibitem{Radz82}
G.V. Radzievskii, The problem of the completeness
of root vectors in the spectral theory of
operator-valued functions,
\textit{Russian Math. Surveys} 37 (1982), 2, 91--164.




%



\bibitem{RosSolSh89}   
G.V. Rozenblyum, M.Z. Solomyak, M.A. Shubin, Spectral
theory of differential operators,
Current problems in mathematics. Fundamental directions, Vol. 64,
5--247, Itogi Nauki i Tekhniki, VINITI,
Moscow, 1989
(in Russian); English transl.: Partial Differential Equations VII.
Encyclopaedia of Mathematical Sciences, vol. 64 (1994), p. 1--235.


\bibitem{Rykhlov2009}                     
V.S. Rykhlov, On the completeness of the root functions of the simplest strongly
irregular differential operators with two-term two-point boundary conditions.
Dokl. Akad. Nauk 428 (2009), 6, 740--743;
English transl.: Dokl. Math. 80 (2009), no. 2, 762--764.







\bibitem{Ryzhov}                              
V. Ryzhov,  Functional model of a class of nonselfadjoint
extensions of symmetric operators,
\textit{Operator Theory: Advances and Applications}, Vol. 174, 117--158.


\bibitem{Shk_2010}                    
A.A. Shkalikov,
On the basis property of root vectors of a perturbed self-adjoint operator,
{\it Proc. Steklov Inst. Math.} {269} (2010), 284--298.







\bibitem{Shubov2011}
M.A. Shubov, 
On the completeness of root vectors of a certain class of
differential operators,
\textit{Math. Nachr.} 284 (2011), 8--9, 1118--1147



\bibitem{Silva-Toloza}          
L.O. Silva, J.H. Toloza,
On the spectral characterization of entire operators with deficiency indices (1,1),
{\it J. Math. Anal. Appl.}, 367 (2010), 2, 360--373.



%

%


\bibitem{VasyuKup}
V. Vasyunin, S. Kupin,
Criteria for the similarity of a dissipative integral operator to a normal operator,
\textit{Algebra i Analiz} 13 (2001), 3, 65--104; English transl.:
\textit{St. Petersburg Math. J.} 13 (2002), 3, 389--416.



\bibitem{Veselov}                  
V.F. Veselov,
Bases from eigensubspaces of a nondissipative operator and the characteristic function,
\textit{Functional Analysis and Its Applications}, 22 (1988), 4, 320--322.





\bibitem{Veselov2}             
V.F. Veselov,
A connection between triangular and functional models of a nonselfadjoint operator,
\textit{Algebra i Analiz} 4 (1992), 4, 80--93;
English transl.: \textit{St. Petersburg Math. J.} 4 (1993), 4, 695--706.


\bibitem{wer} J.~Wermer, On invariant subspaces of normal operators,
{\it Proc. Amer. Math. Soc.} 3 (1952) 2, 270--277.

\bibitem{Wyss}                                 
C. Wyss, Riesz bases for $p$-subordinate perturbations of normal
operators, {\it J. Funct. Anal.} 258 (2010), 1, 208--240.


\bibitem{Yakubov}                         
S. Yakubov, Ya. Yakubov,
Abel basis of root functions of regular boundary value problems,
{\it Math. Nachr.} 197 (1999), 157--187.

\bibitem{Y}
D.V. Yakubovich,
Spectral properties of smooth perturbations of normal operators with planar Lebesgue spectrum,
{\it Indiana Univ. Math. J.} 42 (1993), 1, 55--83.


%
%
\bibitem{Zhang2001}              
L. Zhang, The Completeness of Generalized Eigenfunctions of a Discrete Operator,
\textit{J. Math. Analysis and Applications} 261 (2001), 241--253.


\bibitem{zolot2002}
V.A. Zolotarev, L. de Branges spaces and functional models of nondissipative operators,
\textit{Mat. Fiz. Anal. Geom.} 9 (2002), 4, 622--641.



\end{thebibliography}
\end{document}